\theoremstyle{plain}
\newtheorem*{thm*}{Theorem}
\newtheorem{thm}{Theorem}[section]
\newtheorem{lem}[thm]{Lemma}
\newtheorem{defin}[thm]{Definition}
\newtheorem{prop}[thm]{Proposition}
\newtheorem{cor}[thm]{Corollary}
\newtheorem{slem}[thm]{Sublemma}
\theoremstyle{remark}
\newtheorem{rem}[thm]{Remark}
\newcommand{\LS}{\ensuremath{\underset{n=1}{\overset{\infty}{\cap}} \, {\underset{i}{\overset{\infty}{\cup}}}\,}}
\sloppy \setlength{\parskip}{1 mm}
\author[J.\ Chaika]{Jon Chaika}\email{chaika@math.utah.edu}\address{Department of Mathematics, University of Utah, 155 S 1400 E, Room 233, Salt Lake City, UT~84112, USA}
\author[D. Constantine]{David Constantine}\email{dconstantine@wesleyan.edu}\address{Department of Mathematics and Computer Science, Wesleyan University, 265 Church Street, Middletown, CT~06459, USA}
\begin{document}

\title[Quantitative shrinking targets]{Quantitative shrinking target properties for rotations and interval exchanges}
\maketitle

%

\section{Introduction}
Let $\alpha \in [0,1)$. The rotation $R_{\alpha}:[0,1) \to [0,1)$ by $R_{\alpha}(x)=x+\alpha \mod 1$ is one of the most natural and best understood dynamical systems. For example, Herman Weyl proved the following result on the asymptotic frequency with which an orbit visits a fixed ball:

\begin{thm*}
Let $\alpha\notin \mathbb{Q}$. Then for any $\epsilon>0$ and any $a\in[0,1)$ we have
	\[\lim_{N \to \infty}\frac{\sum_{i=1}^{N}\chi_{B(a, \epsilon)}(R_{\alpha}^ix)}{N2\epsilon}=1.\]
\end{thm*}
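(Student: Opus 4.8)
The plan is to deduce the statement from the stronger assertion that for \emph{every} Riemann‑integrable $f\colon[0,1)\to\mathbb C$ one has $\frac1N\sum_{i=1}^N f(R_\alpha^i x)\to\int_0^1 f\,d\Leb$, and then to take $f=\chi_{B(\frac12,\epsilon)}$, for which $\int f\,d\Leb=\Leb(B(\tfrac12,\epsilon))=2\epsilon$ when $\epsilon\le\tfrac12$ (the case $\epsilon\ge\tfrac12$ is trivial since the integrand is eventually identically $1$); dividing by the fixed positive number $2\epsilon$ then yields the limit $1$.

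\emph{Step 1: characters.} First I would verify the averaging identity for $f(t)=e^{2\pi i k t}$ with $k\in\mathbb Z\setminus\{0\}$. Since $R_\alpha^i x=x+i\alpha\bmod 1$, we have $\sum_{i=1}^N e^{2\pi i k R_\alpha^i x}=e^{2\pi i k x}\sum_{i=1}^N e^{2\pi i k i\alpha}$, and the last sum is a finite geometric series with ratio $e^{2\pi i k\alpha}\ne 1$. This is the \emph{only} place the irrationality of $\alpha$ is used: $k\alpha\notin\mathbb Z$, so $|1-e^{2\pi i k\alpha}|>0$, and the geometric sum is bounded in modulus by $2/|1-e^{2\pi i k\alpha}|$, a constant independent of $N$. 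Dividing by $N$ and letting $N\to\infty$ gives $0=\int_0^1 e^{2\pi i k t}\,dt$; the case $k=0$ is trivial. By linearity the Cesàro averages converge to the integral for every trigonometric polynomial.

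\emph{Step 2: continuous functions.} Next, given $f$ continuous on the circle and $\eta>0$, choose (Fej\'er or Stone--Weierstrass) a trigonometric polynomial $p$ with $\|f-p\|_\infty<\eta$. Then
$$\Bigl|\tfrac1N\textstyle\sum_{i=1}^N f(R_\alpha^i x)-\int_0^1 f\,d\Leb\Bigr|\le 2\eta+\Bigl|\tfrac1N\textstyle\sum_{i=1}^N p(R_\alpha^i x)-\int_0^1 p\,d\Leb\Bigr|,$$
and the second term tends to $0$ by Step 1, so $\limsup_N$ of the left side is $\le 2\eta$; letting $\eta\to0$ gives convergence for all continuous $f$ (in fact uniformly in $x$, which is the statement that $\Leb$ is the unique $R_\alpha$‑invariant measure).

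\emph{Step 3: the indicator.} Finally I would sandwich $\chi_{B(\frac12,\epsilon)}$: for small $\delta>0$ pick continuous $g^-\le\chi_{B(\frac12,\epsilon)}\le g^+$ on the circle, agreeing with $\chi_{B(\frac12,\epsilon)}$ except on $\delta$‑neighborhoods of the two endpoints $\tfrac12\pm\epsilon$ where they interpolate linearly, so that $\int(g^+-g^-)\,d\Leb<2\delta$. Monotonicity of the averages together with Step 2 gives $\int g^-\,d\Leb\le\liminf_N\tfrac1N\sum_{i=1}^N\chi_{B(\frac12,\epsilon)}(R_\alpha^i x)\le\limsup_N\tfrac1N\sum_{i=1}^N\chi_{B(\frac12,\epsilon)}(R_\alpha^i x)\le\int g^+\,d\Leb$, and both $\int g^\pm\,d\Leb$ lie within $2\delta$ of $2\epsilon$; letting $\delta\to0$ shows the limit exists and equals $2\epsilon$.

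None of these steps is genuinely hard; the only point requiring a little care is Step 3, namely passing from continuous test functions to the discontinuous indicator $\chi_{B(\frac12,\epsilon)}$, and making sure the two boundary points contribute negligibly — but this is handled by the standard sandwiching argument above. (An alternative route avoids Step 3 by invoking unique ergodicity of $R_\alpha$ and a portmanteau‑type argument, but proving unique ergodicity again reduces to the computation in Step 1.)
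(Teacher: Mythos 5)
Your proof is correct and is the standard Weyl equidistribution argument (characters via geometric series, Fej\'er/Stone--Weierstrass to continuous functions, sandwiching for the indicator). One small point: the paper itself does not prove this statement at all --- it is quoted in the introduction as Weyl's classical theorem, purely as motivation for the shrinking-target questions that follow --- so there is no ``paper proof'' to compare against. Your route is the expected one and is airtight; the only step meriting any care, as you note, is the passage from continuous test functions to the indicator, which your $g^\pm$ sandwich handles correctly, and the geometric-series bound $\bigl|\sum_{i=1}^N e^{2\pi i k i\alpha}\bigr|\le 2/|1-e^{2\pi i k\alpha}|$ is exactly where irrationality enters.
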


This paper concerns the following question: What if the ball's radius is allowed to shrink as $i$ increases? The focus of this paper is on treating families of sequences of radii $\{r_i\}$ simultaneously and obtaining explicit conditions on $\alpha$ under which theorems like the above can be proved. The following is the main result of this paper for rotations:

\begin{thm}\label{khinchin seq} 
There exists an explicit, full measure diophantine condition on $\alpha \notin \mathbb{Q}$ so that if $\alpha$ satisfies this condition then for any sequence $\{r_i\}$ such that $ir_i$ is non-increasing and $\sum_{i=1}^{\infty}r_i =\infty$, and for any $a\in [0,1)$ we have
 \begin{equation}\label{eqn:thm1}
 	\lim_{N \to \infty}\frac{\sum_{i=1}^N \chi_{B(a,r_i)}(R_{\alpha}^ix)}{\sum_{i=1}^N 2r_i}=1 
\end{equation}
for almost every $x$. 
\end{thm}

If $\alpha$ is badly approximable (a measure zero, full Hausdorff dimension set) then we can relax the condition on the radius sequences further:

\begin{thm} \label{constant type} 
If $\alpha$ is badly approximable, $\{r_i\}_{i=1}^{\infty}$ is non-increasing, and $\sum r_i= \infty$, then for any $a\in [0,1)$
\[\lim_{N \to \infty}\frac{\sum_{i=1}^N \chi_{B(a,r_i)}(R_\alpha^i x)}{\sum_{i=1}^N 2r_i}=1\] 
for almost every $x$.
\end{thm}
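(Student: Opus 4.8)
The strategy is a second‑moment (quasi‑independence) argument in which the badly approximable hypothesis enters through sharp discrepancy bounds, followed by a standard subsequence‑plus‑monotonicity upgrade. Write $S_N(x)=\sum_{i\le N}\chi_{B(y,r_i)}(R_\alpha^ix)$ and $E_N=\sum_{i\le N}2r_i$; the goal is $S_N/E_N\to 1$ a.e. First I would dispose of degenerate cases: if $r_i\not\to 0$ then $\lim r_i=:r_\infty>0$, and subtracting $\chi_{B(y,r_\infty)}$ (for which unique ergodicity, or the $O_\alpha(\log N)$ discrepancy, already gives the asymptotic) reduces matters to counting hits of the annuli $B(y,r_i)\setminus B(y,r_\infty)$, which is either finite a.e. by the convergence half of Borel--Cantelli or, when $\sum(r_i-r_\infty)=\infty$, is again of shrinking‑target type; so assume $r_i\downarrow 0$ and, after discarding finitely many terms, $0<r_i<\tfrac12$. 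Put $\chi_{B(y,r_i)}(R_\alpha^ix)=\chi_{T_i}(x)$ with $T_i=B(y-i\alpha,r_i)$, so $\int\chi_{T_i}=2r_i$. The basic identity is $\mathrm{Cov}(\chi_{T_i},\chi_{T_j})=\lambda(T_i\cap T_j)-4r_ir_j$, where $\lambda(T_i\cap T_j)$ is a ``tent'' function of $\{(j-i)\alpha\}$, of bounded variation, supported at scale $\max(r_i,r_j)$, with mean $4r_ir_j$ over the circle; equivalently $\mathrm{Var}\bigl(\sum_{M<i\le N}\chi_{T_i}\bigr)=\sum_{n\ne 0}\bigl|\sum_{M<i\le N}\tfrac{\sin 2\pi nr_i}{\pi n}e^{2\pi in i\alpha}\bigr|^2$. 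Thus every covariance is an equidistribution error for a function localized at the larger of the two radii.

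The quantitative input is: for $\alpha$ of bounded partial quotients and any arc $I$, $\bigl|\#\{k\le K:\{k\alpha\}\in I\}-K\lambda(I)\bigr|\ll_\alpha 1+\log^+(K\lambda(I))$; the error only ``sees'' the finitely many continued‑fraction scales between $\lambda(I)$ and $1/K$, of which there are $O_\alpha(1+\log^+(K\lambda(I)))$ since the denominators $q_n$ grow geometrically. To use this against a varying radius I would take a $(1+\kappa)$‑adic decomposition: for small $\kappa>0$, let $I_\ell=\{i:r_i\in((1+\kappa)^{-\ell-1},(1+\kappa)^{-\ell}]\}$ (intervals of consecutive integers, since $r_i$ is monotone), $\rho_\ell=(1+\kappa)^{-\ell}$, $a_\ell=|I_\ell|$, and sandwich $\underline S_N^\kappa\le S_N\le\overline S_N^\kappa$ by replacing $r_i$ on $I_\ell$ by $\rho_\ell/(1+\kappa)$ and by $\rho_\ell$; here $\mathbb E\overline S_N^\kappa\in[E_N,(1+\kappa)E_N)$, $\mathbb E\underline S_N^\kappa\in[E_N/(1+\kappa),E_N)$, and $\overline S_N^\kappa=\sum_\ell\Xi_\ell^{(N)}$ is a sum of block counts $\Xi_\ell^{(N)}$ all at radius $\rho_\ell$.

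For $\mathrm{Var}(\overline S_N^\kappa)$ I would estimate diagonal and off‑diagonal block contributions. Within a block the covariance sum is $2\sum_{k=1}^{a_\ell-1}(a_\ell-k)\bigl(h(\|k\alpha\|)-4\rho_\ell^2\bigr)$ with $h$ the equal‑radii tent at scale $\rho_\ell$; Abel summation against the weights $(a_\ell-k)$ turns this into $2\sum_{K<a_\ell}\bigl(\sum_{k\le K}h(\|k\alpha\|)-4\rho_\ell^2K\bigr)$, and the layer‑cake decomposition of $h$ into indicators of arcs of length $\ll\rho_\ell$ plus the localized discrepancy gives $\mathrm{Var}(\Xi_\ell^{(N)})\ll_\alpha \rho_\ell a_\ell\bigl(1+\log^+(\rho_\ell a_\ell)\bigr)$; since $\rho_\ell a_\ell\le E_N$ and $\sum_\ell\rho_\ell a_\ell\ll E_N$, summing over $\ell$ costs only a factor $1+\log^+E_N$. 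For $\ell<\ell'$ the radii satisfy $\rho_{\ell'}\le(1+\kappa)^{-1}\rho_\ell$ and the overlap function is still localized at scale $\rho_\ell$ with mass $4\rho_\ell\rho_{\ell'}$, so the same layer‑cake‑plus‑localized‑discrepancy estimate (applied to the $a_{\ell'}$ consecutive orbit points $\{(j-i)\alpha\}_{j\in I_{\ell'}}$ for each fixed $i\in I_\ell$) yields $|\mathrm{Cov}(\Xi_\ell^{(N)},\Xi_{\ell'}^{(N)})|\ll_\alpha \rho_\ell a_\ell\,(1+\kappa)^{-(\ell'-\ell)}\bigl(1+\log^+E_N+(\ell'-\ell)\bigr)$; summing the geometric series in $\ell'-\ell$ and then in $\ell$ gives $\sum_{\ell<\ell'}|\mathrm{Cov}|\ll_{\alpha,\kappa}E_N(1+\log^+E_N)$. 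Hence $\mathrm{Var}(\overline S_N^\kappa)\ll_{\alpha,\kappa}E_N(1+\log^+E_N)$, and likewise for $\underline S_N^\kappa$.

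To finish, choose $N_k$ with $\mathbb E\overline S_{N_k}^\kappa\in[(1+\epsilon)^k,(1+\epsilon)^{k+1})$ (possible for large $k$ since the increments of $\mathbb E\overline S_N^\kappa$ tend to $0$); then $\sum_k\mathrm{Var}(\overline S_{N_k}^\kappa)/(\mathbb E\overline S_{N_k}^\kappa)^2\ll_{\alpha,\kappa}\sum_k k(1+\epsilon)^{-k}<\infty$, so Chebyshev and Borel--Cantelli give $\overline S_{N_k}^\kappa/\mathbb E\overline S_{N_k}^\kappa\to 1$ a.e.; monotonicity of $N\mapsto\overline S_N^\kappa$ together with $\mathbb E\overline S_{N_{k+1}}^\kappa/\mathbb E\overline S_{N_k}^\kappa\to 1+\epsilon$ upgrades this to $(1+\epsilon)^{-2}\le\liminf\le\limsup\le(1+\epsilon)^2$ for all $N$, and $\epsilon\to 0$ along a sequence yields $\overline S_N^\kappa/\mathbb E\overline S_N^\kappa\to 1$ a.e. (similarly for $\underline S_N^\kappa$). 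The sandwich and $\mathbb E\overline S_N^\kappa/E_N,\ \mathbb E\underline S_N^\kappa/E_N\in[(1+\kappa)^{-1},1+\kappa]$ then give $(1+\kappa)^{-1}\le\liminf S_N/E_N\le\limsup S_N/E_N\le 1+\kappa$ a.e., and $\kappa\to 0$ along a sequence completes the proof. The main obstacle is the variance estimate — specifically, making the logarithmic loss $\log E_N$ rather than $\log N$: the crude global $O_\alpha(\log K)$ discrepancy only yields $\mathrm{Var}\ll_\alpha E_N\log N$, which is useless when $E_N$ grows very slowly (e.g. $r_i=1/(i\log i)$), so one must use the scale‑localized discrepancy and feed it through the multi‑scale decomposition so that every error is measured against $E_N$ and not against $N$.
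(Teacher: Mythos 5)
Your argument is correct, and it takes a genuinely different route from the paper's. The paper proves Theorem~\ref{constant type} in the generality of IETs of constant type ($e_T(n)>\sigma/n$), and its machinery reflects this: the quantitative Boshernitzan criterion (Theorem~\ref{thm:quant bc}) is the equidistribution input, the sum is decomposed by \emph{time} into geometric blocks $[C^j,C^{j+1})$, and there is a case split according to whether $ir_i$ stays bounded (handled by a covariance estimate and the abstract criterion of Lemma~\ref{lem:abstract crit2}, paralleling Section~\ref{indep}) or $ir_i$ is large (handled directly from equidistribution via Lemma~\ref{lem:big r}, with careful bookkeeping of the $G_{C,\rho,M}$ and $B_{C,\rho,M}$ index sets to control the transition region). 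You instead work only with rotations and replace the Boshernitzan machinery by the scale-localized discrepancy bound $|D_K(I)|\ll_\alpha 1+\log^+(K|I|)$ for bounded partial quotients — which is correct: the Ostrowski blocks with $q_j\ge 1/|I|$ each contribute $O(1)$ and there are $O_\alpha(\log^+(K|I|))$ of them, while the suffix consisting of all blocks with $q_j<1/|I|$ has total length $O_\alpha(1/|I|)$ and so, being a contiguous orbit segment, contributes $O_\alpha(1)$ by the three-gap theorem. You then decompose by \emph{radius scale} rather than by time, which turns every covariance into a localized discrepancy error for a tent function and yields a single variance bound $\mathrm{Var}\ll_{\alpha,\kappa}E_N\log^+E_N$, so no case split on $ir_i$ is needed. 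The trade-off is clear: the paper's route generalizes to interval exchanges, where there is no continued-fraction discrepancy theory and the localized equidistribution must come from Theorem~\ref{thm:quant bc}; your route is cleaner and more self-contained for the circle, unifies the two regimes the paper treats separately, and makes transparent exactly why the logarithmic loss must be measured against $E_N$ rather than $N$. One small point worth tightening if you write this up: in the reduction to $r_i\downarrow 0$, when $r_\infty>0$ and $\sum(r_i-r_\infty)=\infty$ the annular targets $B(y,r_i)\setminus B(y,r_\infty)$ are two one-sided shrinking intervals with a fixed endpoint each, and you should note explicitly that your covariance/discrepancy analysis applies verbatim to such one-sided targets (it does, since nothing used that the intervals are centered).
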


The choice of the center of these balls  $a$ does not play any role in our proof. For the sake of concreteness, outside of the statements of our theorems we will prove all our results for $a=\frac{1}{2}$. The full measure set of $x$ for which our theorems hold does, of course, depend on $a$.

We note that Kurzweil showed that the conclusion of Theorem \ref{constant type} can hold at most for badly approximable $\alpha$:

\begin{thm*} (Kurzweil \cite{kurz})\label{startthm} 
For any decreasing sequence of positive real numbers $\{r_i\}_{i=1}^{\infty}$ with divergent sum there exists $\mathcal{V} \subset [0,1)$, a full measure set of $\alpha$, such that for all ${\alpha \in \mathcal{V}} $ we have
\[ m\left(\LS B(R_{\alpha}^{-i}(x),r_i)\right)=1\]
 for every $x$, where $m$ denotes Lebesgue measure.
  
On the other hand,
\[ m\left(\LS B(R_{\alpha}^{-i}(x),r_i)\right)=1\]
for every $x$ and every decreasing sequence of positive real numbers $\{r_i\}_{i=1}^{\infty}$~  with divergent sum iff $\alpha$ is badly approximable.
\end{thm*}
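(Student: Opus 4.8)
The plan is to split the statement into its two halves and throughout to translate everything into inhomogeneous approximation of reals by the orbit $\{i\alpha\}$. Since $R_\alpha$ commutes with every rotation of the circle, $B(R_\alpha^{-i}(x),r_i)=x+B(R_\alpha^{-i}(0),r_i)$, so $\LS B(R_\alpha^{-i}(x),r_i)=x+\LS B(R_\alpha^{-i}(0),r_i)$ and its measure does not depend on $x$; hence we take $x=0$. A point $y$ lies in $\LS B(R_\alpha^{-i}(0),r_i)$ exactly when $\|i\alpha+y\|<r_i$ infinitely often, so writing $A_i=A_i(\alpha)=\{\beta:\|i\alpha-\beta\|<r_i\}$, an arc of length $2r_i$ centred at $i\alpha$, the claim $\Leb(\LS B(R_\alpha^{-i}(x),r_i))=1$ becomes $\Leb(\limsup_iA_i)=1$. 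Two facts are used repeatedly. Because $r$ is non-increasing, $R_\alpha^{-1}A_i=\{\beta:\|(i-1)\alpha-\beta\|<r_i\}\subseteq A_{i-1}$, whence $R_\alpha^{-1}(\limsup_iA_i)\subseteq\limsup_iA_i$; since $R_\alpha$ preserves $\Leb$ this inclusion is an equality mod $\Leb$, so $\limsup_iA_i$ is $R_\alpha$-invariant mod $\Leb$ and, by ergodicity of $R_\alpha$, has measure $0$ or $1$; thus one only has to decide whether this measure is positive, or whether it is $<1$. Also $\sum_i\Leb(A_i)=\sum_i2r_i=\infty$, and monotonicity makes it elementary that $\sum_n\min\bigl(1,\sum_{q_n\le i<q_{n+1}}2r_i\bigr)=\infty$ for the convergent denominators $q_n$ of $\alpha$.

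For the first half (fixed non-increasing $\{r_i\}$ with $\sum r_i=\infty$; almost every $\alpha$), I pass to the square with coordinates $(\alpha,\beta)$ and put $\widetilde A_i=\{(\alpha,\beta):\|i\alpha-\beta\|<r_i\}$, so $(\Leb\times\Leb)(\widetilde A_i)=2r_i$. The essential observation is that these sets are \emph{pairwise independent}: integrating over $\alpha$ the length of the overlap of the arcs $\{\|i\alpha-\beta\|<r_i\}$ and $\{\|j\alpha-\beta\|<r_j\}$, and using that $\|(i-j)\alpha\|$ is distributed like $\|U\|$ for $U$ uniform on $[0,1)$, one computes $(\Leb\times\Leb)(\widetilde A_i\cap\widetilde A_j)=4r_ir_j=(\Leb\times\Leb)(\widetilde A_i)\,(\Leb\times\Leb)(\widetilde A_j)$ when $i\ne j$. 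As $\sum_i(\Leb\times\Leb)(\widetilde A_i)=\infty$, the pairwise-independent form of the second Borel--Cantelli lemma yields $(\Leb\times\Leb)(\limsup_i\widetilde A_i)=1$. Fibering over $\alpha$: for a.e.\ $\alpha$ the fibre of $\limsup_i\widetilde A_i$ is $\limsup_iA_i(\alpha)$, which has $\Leb$-measure $0$ or $1$ by the previous paragraph, so since the total is $1$ the fibre is full for a.e.\ $\alpha$; this is the required set $\mathcal V$.

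For the ``if'' direction of the second half ($\alpha$ badly approximable $\Rightarrow$ the property for \emph{every} admissible $\{r_i\}$) the square trick is unavailable, and the arcs $A_i(\alpha)$ are actually strongly positively correlated, clumping at the scales $1/q_n$, so a bare second-moment estimate fails. Instead I would establish a local covering lemma: if the partial quotients of $\alpha$ are $\le D$ there is $c=c(D)>0$ with $\Leb\bigl(\bigcup_{i\ge M}A_i(\alpha)\bigr)\ge c$ for all $M$. Given $M$, take $N$ minimal with $\sum_{M\le i<N}2r_i\ge1$; by the three-distance theorem the consecutive orbit $\{i\alpha:M\le i<N\}$ cuts the circle into arcs of $\le3$ lengths, all comparable to $1/(N-M)$ since $a_j\le D$, and one plays this off against monotonicity of $\{r_i\}$: according as the final radius $r_{N-1}$ is large or small relative to $1/(N-M)$, either consecutive targets overlap into a connected arc covering the whole circle, or the targets are essentially disjoint of total mass $\asymp1$, and either way $\bigcup_{M\le i<N}A_i(\alpha)$ has measure $\ge c(D)$. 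Then $\Leb(\limsup_iA_i(\alpha))=\lim_{M\to\infty}\Leb\bigl(\bigcup_{i\ge M}A_i(\alpha)\bigr)\ge c>0$, which the zero--one law promotes to $1$. \emph{This is the main obstacle for this direction}: getting the covering proportion $c$ uniform in $M$ and controlling all the ways $r_i$ can decay (arbitrarily slowly, or with sudden drops even though $r$ is monotone) is exactly where bounded type is used, since for general $\alpha$ the orbit can leave gaps much larger than $1/(N-M)$.

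For the ``only if'' direction I argue by contraposition, following Kurzweil. If $\alpha$ is not badly approximable there are $n$ with $a_{n+1}$ arbitrarily large; then for $L$ well inside $(q_n,q_{n+1})$ the orbit $\{i\alpha:i<L\}$ leaves gaps of size $\asymp1/q_n\gg1/L$, so an arc placed in such a gap is met by the orbit only in sparse bursts. One then constructs a non-increasing $\{r_i\}$ with $\sum r_i=\infty$ tuned so that, along a sufficiently sparse sequence of such blocks, the targets arising from each burst have total mass below the length of the corresponding arc; consequently a positive-measure set of $\beta$ avoids $A_i(\alpha)$ for all large $i$, so $\Leb(\limsup_iA_i(\alpha))<1$, hence $=0$ by the zero--one law, contradicting the assumed property. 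The delicate point --- and the reason this direction is genuinely hard --- is producing one sequence $\{r_i\}$ that is simultaneously non-increasing, of divergent sum, and small enough on the bursts of infinitely many blocks; this is the heart of Kurzweil's construction.
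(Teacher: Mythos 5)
The paper does not prove this statement: it is quoted as a theorem of Kurzweil with a citation to \cite{kurz}, and the only trace of its proof in the text is the pairwise-independence identity (the displayed equality before ``From this fact we readily get a convergence in measure statement''). So there is no paper proof to compare against; I can only assess your attempt on its own terms.

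Your first half is complete and correct, and it is essentially Kurzweil's own device. The reduction to $x=0$ by commutativity, the zero--one law via $R_\alpha^{-1}A_i\subseteq A_{i-1}$ (monotonicity of $r_i$ is exactly what makes $\limsup_i A_i$ essentially $R_\alpha$-invariant), and the pairwise independence of the $\widetilde A_i$ in the square (the same identity the paper records) feeding into the Erd\H{o}s--R\'enyi/Kochen--Stone Borel--Cantelli lemma, followed by Fubini in the $\alpha$-fibre, is a clean and correct proof of the almost-every-$\alpha$ statement. One tiny remark: once $(\Leb\times\Leb)(\limsup_i\widetilde A_i)=1$, Fubini alone already forces a.e.\ fibre to have measure $1$, so the zero--one law is not needed at that step; it is, however, indispensable in the other two parts.

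For the ``if'' and ``only if'' halves you correctly locate the right tools (the three-distance theorem and a uniform lower bound $\Leb(\bigcup_{i\ge M}A_i)\ge c(D)$ for the bounded-type direction; a tuned counterexample sequence $\{r_i\}$ built from blocks where $a_{n+1}$ is large for the converse), and you are candid that these are not carried out. That candour is warranted: both directions genuinely require careful case analysis. In the ``if'' direction, pinning down the dichotomy ``either consecutive targets merge into one arc, or they are near-disjoint of mass $\asymp1$'' needs a split on where $r_i$ crosses the scale $1/(N-M)$, and the bounded-type hypothesis is used twice (to control the three gap lengths and to control how many indices a fixed $q_n$-scale persists for). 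In the ``only if'' direction, building a single non-increasing divergent $\{r_i\}$ that is simultaneously small on infinitely many bursts --- without losing divergence or monotonicity --- is the hard combinatorial core of Kurzweil's paper, and your sketch stops exactly at the point where that construction begins. In short: Part one is a proof; parts two and three are accurate roadmaps but not proofs, as you yourself note.
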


Let us make the statements of Theorems \ref{khinchin seq} and \ref{constant type} precise.  We call a sequence $\{r_i\}$ where $ir_i$ is non-increasing and $\sum r_i=\infty$ a \emph{Khinchin sequence}.  Let $[a_1,...]$ be the continued fraction expansion of $\alpha$. The number $\alpha$ is \emph{ badly approximable }if $\underset{n \to \infty}{\limsup} \phantom{i}a_n<\infty$. The diophantine condition in Theorem \ref{khinchin seq} is as follows: 
 
 \begin{itemize}
	\item $a_n<n^{\frac{4}{3}}$ for all but finitely many $n$,
	\item $\underset{C \to \infty}{\lim}\underset{N \to \infty}{\limsup}\, \frac{1}{N} \left(\underset{i=1}{\overset{N}{\sum}}\log a_i-\underset{i:a_i<C}{\overset{N}{\sum}}\log a_i\right)=0,$ and
	\item $\underset{k:a_k>k^{\frac{1}{2}}}{\sum} \frac{\log k}{k^{\frac{2}{3}}} < \infty$. 
 \end{itemize}
Here, and throughout the paper, $\sum_{i\in S}^N$ means $\sum_{i\in S\cap[0,N]}$.

The first condition is a standard full measure condition on $\alpha$ (see, e.g., \cite[Thm 30]{khinchin}). 

The second is a mild ``non-divergence" condition. The $\alpha$ which satisfy it have full measure, which can be seen as follows. Let $\mu$ be the Gauss measure on $[0,1)$ and consider the $L^1(\mu)$ functions $\gamma(x) = \log( \lfloor \frac 1 x \rfloor)$ -- the logarithm of the first term in the continued fraction expansion of $x$ -- and 
\[\gamma_a(x)=\begin{cases} \log(a) & \text{ if }\lfloor \frac 1 x \rfloor =a\\0 &\text{ else}\end{cases}.\]
 Applying the Birkhoff Ergodic Theorem for the Gauss map, $\phi(x)=\frac{1}{x} - \lfloor \frac{1}{x}\rfloor$ to $\gamma(x)- \sum_{a=1}^{C-1} \gamma_a(x)$ and noting that $\Vert \gamma(x) - \sum_{a=1}^{C-1} \gamma_a(x) \Vert_1 \to 0$ as $C\to \infty$ gives the result.
 
For the third condition, recall that $m(\{ \alpha: a_j(\alpha)=k \}) < \frac{2}{k^2}$ (see, e.g., \cite[p. 60]{khinchin}) so $m(\{\alpha : a_j(\alpha) \geq k \}) \leq \frac{D}{k}$ for some constant $D$. Let $f_j(\alpha) = \frac{\log j}{j^{2/3}} \chi_{\{ a_j> j^{1/2}\}}(\alpha)$. Then $\int f_j dm \leq \frac{\log j}{j^{2/3}} \frac{D}{j^{1/2}}.$ Letting $g(\alpha) = \sum_{j=1}^\infty f_j(\alpha)$, since the integrals of the $f_j$ are summable, $\int g(\alpha) dm = \sum_{j=1}^\infty \int f_j(\alpha) dm <\infty$. Therefore $g$ is finite for almost all $\alpha$, that is, our third condition holds for almost all $\alpha$.

We will prove our results not just for rotations, but also for interval exchange transformations (IETs; Definition \ref{iet def}) satisfying similar diophantine assumptions. The statement of this more general theorem (Theorem \ref{thm:explicit}) requires a few technical definitions and so is delayed until Section \ref{sec:Thm1}. We mention D. Kim and S. Marmi \cite{log iet}, S. Galatolo \cite{galatolo}, L. Marchese \cite{march}, M. Boshernitzan and J. Chaika \cite{gauges}, M. Marmi, S. Moussa and J-C. Yoccoz \cite{mmy} where a variety of diophantine results for interval exchanges and rotations are proven.

A key tool in extending our work to IETs is a quantitative version of Boshernitzan's criterion for unique ergodicity which may be of independent interest (see Section \ref{sec:quant bc} for terminology, historical discussion and proof). We call an interval bounded by two adjacent discontinuities of $T^n$ (counting 0 and 1 as discontinuities) an \emph{$n$-block interval} of $T$ (see Definition \ref{defn:n block interval} in the Appendix).

\begin{thm} \label{thm:quant bc}  
Let $T$ be a minimal interval exchange transformation. Let $e_T(n)$ denote the minimum measure of any $n$-block interval of $T$. Let $c>0$. Assume $n_j\in \mathbb{N}$ have the following two properties:
\begin{enumerate}
	\item $\frac{n_{j+1}}{n_j}>2$
	\item $e_T(n_j)>\frac {c}{n_j}$ for all $j$.
\end{enumerate}
Let $J$ be any $n_i$-block interval of $T$. Then there exist constants $C_1,C_2,\hat{q}>0$ depending only on $c$ such that for any points $x,x'$ we have 
\[\frac 1 {n_{i+\hat{q}+L}}\Big|\underset{j=1}{\overset{n_{i+\hat{q}+L}}{\sum}} \chi_J(T^jx)-\chi_J(T^jx')\Big|<C_1e^{-C_2L}|J|\]
for all $L\in \mathbb{N}$. $|J|$ denotes the length of $J$.
\end{thm}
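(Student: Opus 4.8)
The plan is to use the Rokhlin‑tower structure carried by blocks, turn hypothesis (2) into quantitative control on the shape of the towers at each scale $n_m$, and then renormalize over the scales $n_i,n_{i+1},\dots,n_{i+L}$ so that the oscillation of the Birkhoff sums of $\chi_J$ contracts by a definite factor each time the scale advances. I first recall the two elementary facts about blocks that will be used: if $B$ is an $n$‑block then $B,TB,\dots,T^{n-1}B$ are pairwise disjoint intervals of equal length (so they form a Rokhlin tower of height $n$), and an $n$‑block refines every block of scale $\le n$ — in particular the $n_i$‑block $J$ is, for each $m\ge i$, a union of $n_m$‑blocks. For each $m\ge i$ I organize the $n_m$‑blocks into a Rokhlin castle $\mathcal C_m$ for $T$: a disjoint union of boundedly many towers (the bound depending only on the number of exchanged intervals) whose floors partition $[0,1)$, with $T$ sending each non‑top floor to the floor above and each top floor into a union of bases. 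Hypothesis (2), together with Kac's formula and the fact that an interval exchange preserves \Leb{}, is what makes the geometry of $\mathcal C_m$ quantitative at scale $n_m$ (all tower heights $\ge n_m$, and the part of $[0,1)$ governed by towers of height comparable to $n_m$ is bounded below in terms of $c$); and hypothesis (1) is what guarantees that, since $\mathcal C_{m+1}$ refines the cross‑section of $\mathcal C_m$, a complete base‑to‑top traversal of a tower of $\mathcal C_{m+1}$ decomposes into at least two consecutive complete traversals of towers of $\mathcal C_m$.

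Next I decompose orbits along these castles. Cut the segment $x,Tx,\dots,T^{n_{i+L}}x$ along $\mathcal C_m$ into an initial partial traversal (from $x$'s floor up to the top of its tower), a run of complete traversals, and a final partial traversal truncated at time $n_{i+L}$. A complete traversal of a tower $a$ of $\mathcal C_m$ contributes to $\sum\chi_J(T^jx)$ exactly the number $W_m(a)$ of floors of $a$ lying in $J$ — a quantity independent of $x$ apart from the at most two castle floors straddling an endpoint of $J$ — and there are of order $n_{i+L}/n_m$ such traversals. The refinement relation of the previous paragraph then expresses the vector $W_{m+1}=(W_{m+1}(a))_a$ and the height vector $H_{m+1}$ in terms of $W_m,H_m$ through a nonnegative integer matrix of bounded size all of whose row sums are at least $2$.

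Let $D_m=\sup_{x,x'}\bigl|\sum_{j=1}^{n_m}\chi_J(T^jx)-\sum_{j=1}^{n_m}\chi_J(T^jx')\bigr|$. The heart of the argument is a one‑scale contraction
$$\frac{D_{m+1}}{n_{m+1}}\ \le\ \theta(c)\,\frac{D_m}{n_m}+\frac{C(c)}{n_m},\qquad \theta(c)<1,$$
obtained as follows: because every complete traversal of a given tower of $\mathcal C_m$ carries the same count $W_m(a)$, and (by the consequence of (2) above) these complete traversals exhaust all but an $O(1/n_m)$ fraction of the orbit, the only sources of discrepancy between $\sum_{j=1}^{n_{m+1}}\chi_J(T^jx)$ for two different points are the $O(1)$ boundary floors, the two partial traversals, and the difference of the empirical distributions on the towers of $\mathcal C_m$ realized by the two orbits — and this last quantity is controlled by $D_m$ through the matrix recursion, losing a factor strictly below $1$ precisely because each scale‑$(m+1)$ traversal is assembled from at least two scale‑$m$ traversals. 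Since $n_m\ge 2^{m-i}n_i$ and $D_i\le n_i$, iterating this inequality from $m=i$ to $m=i+L-1$ yields $D_{i+L}/n_{i+L}\le C_1 e^{-C_2 L}$ with $C_1,C_2$ depending only on $c$, and $\bigl|\sum_{j=1}^{n_{i+L}}(\chi_J(T^jx)-\chi_J(T^jx'))\bigr|\le D_{i+L}$ finishes the proof.

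The main obstacle is precisely this contraction step. One must (i) handle the at most two boundary floors so that their cumulative effect over all traversals is genuinely $O(1)$ per traversal and does not secretly scale with $n_m$; (ii) dispose of the two partial traversals produced at every scale, which requires unrolling them recursively through the finer castles rather than bounding them crudely; and (iii) verify the combinatorial claim — using hypothesis (1) — that a scale‑$(m+1)$ traversal is built from at least two scale‑$m$ traversals, which is exactly what forces the contraction factor $\theta(c)$ to be strictly less than $1$. Everything else (the tower property of blocks, the castle construction, Kac's formula) is standard.
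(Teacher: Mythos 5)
Your overall strategy -- organize orbits into Rokhlin towers at each scale $n_m$, express the Birkhoff sum of $\chi_J$ over a long orbit as a sum of contributions from complete traversals, and obtain exponential decay by a renormalization/contraction estimate across scales -- is the same strategy the paper uses. But the mechanism you propose for the contraction is wrong, and this is precisely the heart of the theorem.

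You claim that $D_{m+1}/n_{m+1}\le \theta(c)\,D_m/n_m + C(c)/n_m$ with $\theta(c)<1$, and you attribute the strict inequality $\theta(c)<1$ to the fact that, via hypothesis~(1), each complete traversal of a tower of $\mathcal C_{m+1}$ decomposes into at least two complete traversals of towers of $\mathcal C_m$ (i.e.\ the incidence matrix has row sums $\ge 2$). This does not give a contraction. A row sum $\ge 2$ is compatible with each scale-$(m+1)$ tower decomposing into two copies of the \emph{same} scale-$m$ tower: if tower $b_1 = a_1a_1$ and tower $b_2=a_2a_2$, the normalized counts $W_{m+1}(b)/H_{m+1}(b)$ take exactly the same extremal values as $W_m(a)/H_m(a)$, so $D_{m+1}/n_{m+1}=D_m/n_m$ and $\theta=1$. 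What you actually need is that, within a bounded number $r$ of scale steps, \emph{every} tower at scale $m+r$ is assembled from a definite proportion $\delta>0$ of \emph{each} tower at scale $m$ -- a positivity/primitivity statement about the incidence matrices, not merely a row-sum bound. This is where the Boshernitzan hypothesis $e_T(n)>c/n_j$ (hypothesis~(2)) is essential, not merely for the ``quantitative geometry'' you mention: because each scale-$m$ block has measure $\ge c/n_m$, a Boshernitzan-type averaging argument (the paper's Lemma~\ref{proportion between}) forces a positive-measure set of scale-$m$ blocks to realize any intermediate frequency, and iterating this a bounded number $r$ of times (the paper's Lemma~\ref{cliques} and Corollary~\ref{appearance}, with $r$ and $\delta$ depending only on $c$) gives the required positivity. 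Only then does one obtain a contraction factor $(1-2\delta)<1$, and it holds per $r$ scale steps, not per single scale step. Hypothesis~(1) plays a much more modest role (ensuring the scales are well separated so each coarse tower truly crosses several fine ones); it cannot by itself source the contraction. So there is a genuine gap: without a proof that the transition matrices become uniformly primitive (with bounds depending only on $c$), the contraction step does not follow, and the rest of the renormalization argument has nothing to iterate.
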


Quantitative equidistribution results for interval exchanges have also been proven by A. Zorich \cite{zorich}, G. Forni \cite{forni}, and J. Athreya and G. Forni \cite{ath_forn}.

%

\subsection{Related results in other settings}

\begin{defin} Given a dynamical system $(X,T,\mu)$, a sequence of sets $\{C_i\}$ is a \emph{strong Borel Cantelli} sequence for $T$ if 
$$\underset{N \to \infty}{\lim}\, \frac{\sum_{i=1}^N\chi_{C_i}(T^ix)}{\sum_{i=1}^N\mu(C_i)}=1$$
for almost every $x$.
\end{defin}

This paper establishes that for almost every $\alpha$, any sequence of balls $B(\frac 1 2 ,r_i)$  so that $\{r_i\}$ is a Khinchin sequence is strong Borel Cantelli for $R_\alpha$. If the rotation is badly approximable we may relax the condition to allow $r_i$ just non-increasing and with divergent sum. 

This question has been considered in systems of high complexity. Philipp \cite{philipp} proved that for the Gauss map, or a $\beta$-shift with the smooth invariant measure any sequence of intervals so that the sum of the measures diverge is strong Borel Cantelli. Dolgopyat \cite{dolg} proved an analogous result for Anosov diffeomorphisms. Chernov-Kleinbock \cite{sbc} proved a similar result for topological Markov chains with a Gibbs measure: cylinders satisfying a certain nesting condition and so that the sum of their measures diverge are strong Borel Cantelli.  To highlight the difference between our low complexity setting and the high complexity situation we remark that for every rotation $\alpha$ there is a sequence of sets $\{C_i\}$ with each $C_i \in \{[0,1], [\frac 1 4, \frac 3 4 ]\}$ which is not strong Borel Cantelli. 

%

\subsection{Outline of paper}

We prove our results following a proof of the strong law of large numbers. 

In Section \ref{sec:Thm1}, we prove Theorem \ref{thm:explicit}, the generalization of Theorem \ref{khinchin seq} to IETs. The first key step is Proposition \ref{prop:correlation}, which we prove in Section \ref{indep}. This Proposition says that, in the presence of the diophantine assumption, a large part of the sum in the numerator of equation \eqref{eqn:thm1} can be broken up into sums over disjoint ranges for $i$ in such a way that the resulting quantities are approximately independent.

Section \ref{sec:abstract} shows, via this approximate independence result, that Theorem \ref{khinchin seq} is true if we ignore those terms in the sum which are not part of these roughly independent quantities. Then Section \ref{sec:limsup} treats the terms ignored in Section \ref{sec:abstract}, showing that their contribution is negligible and finishing the proof.

We then prove Theorem \ref{constant type} in two parts. In Section \ref{sec:reasonable} we treat radius sequences $\{r_i\}$ where $\sup\, ir_i<\infty$. In Section \ref{sec:big section} we treat the general case.

Section \ref{sec:quant bc} proves the quantitative Boshernitzan criterion, Theorem \ref{thm:quant bc}, which is used in the earlier sections.

There is an appendix that  provides a treatment of the symbolic coding of an IET. This is well-known material included for completeness, and to provide a reference for notation and terminology used elsewhere in the paper.

\subsection{Acknowledgments}
J. Chaika would like to thank B. Fayad and D. Kleinbock for encouraging me to pursue this question. We would like to thank J. Athreya, M. Boshernitzan, A. Eskin, H. Masur, R. Vance and W. Veech for helpful conversations. J. Chaika was partially supported by NSF grants DMS-1004372, DMS-135500 and DMS-1452762, a Sloan fellowship and a Warnock chair. We are also deeply indebted to anonymous referees for many helpful suggestions on earlier versions of the paper.

%

\section{Proof of Theorem \ref{khinchin seq}}\label{sec:Thm1}
\subsection{Setup and an outline of the proof}\label{sec:setup}

In this section we introduce notation and terminology necessary to state and prove Theorem \ref{thm:explicit} -- our extension of Theorem \ref{khinchin seq} to interval exchange transformations. Our first task is to introduce an analogue of the continued fraction expansion used to state Theorem \ref{khinchin seq}. We also give a short outline of the proof of Theorem \ref{thm:explicit} and record a few lemmas for future use.

\begin{defin} \label{iet def} Given a vector $L=(l_1,l_2,...,l_d)$ where $l_i \geq 0$ and $\sum_{i=1}^d l_i =1$, we obtain $d$ sub-intervals of $[0,1)$: 
\[I_1=[0,l_1), \ I_2=[l_1,l_1+l_2), \ ..., \ I_d=[l_1+...+l_{d-1},1).\] 
Given a permutation $\pi$ on  the set $\{1,2,...,d\}$, we obtain a d-\emph{Interval Exchange Transformation} (IET)  $T \colon [0,1) \to [0,1)$ which exchanges the intervals $I_i$ according to $\pi$. That is, if $x \in I_j$ then 
\[T(x)= x - \underset{k<j}{\sum} l_k +\underset{\pi(k')<\pi(j)}{\sum} l_{k'}.\]
\end{defin}

Throughout the paper, we work with the Lebesgue measure on $[0,1)$, which is invariant under any IET. The Lebesgue measure of a set $A$ will be denoted by $m(A)$. For intervals, we will write $|J|$ for $m(J)$.

The points $D=\{\sum_{i=1}^rl_i\}_{r=1}^{d-1}$ are the \emph{discontinuities} of $T$. The discontinuities of $T^n$ are $\bigcup_{i=0}^{n-1}T^{-i}D$. Generalizing the behavior of irrational circle rotations to IETs is the Keane condition:

\begin{defin}
$T$ satisfies the \emph{Keane condition} if the orbits of all its discontinuities are infinite and disjoint.
\end{defin}
This full measure condition will be assumed for Theorems \ref{thm:explicit} and \ref{thm:iet bad approx}.

Given an IET $T$, let $e_T:\mathbb{N}\to \mathbb{R}$ be defined as follows: $e_T(n)$ is the minimum distance between two discontinuities of $T^n$. If two discontinuities orbit into each other then $e_T(n)$ is defined to be 0. Since $T^{-1}(\{0,1\})$ is contained in the set of discontinuities we have that $e_T(n)$ is at most (i.e. $\leq$\footnote{Throughout the paper, when we write `at most' we mean $\leq$. We avoid using this term when a distinction between $<$ and $\leq$ is important for our arguments.}) the measure of the smallest $(n-1)$-block interval (see Appendix \ref{symb code}). Notice that $e_T$ is a non-increasing function.

Fix $\xi>0$. We define an increasing sequence of integers $n_i(\xi)$ inductively as follows. Let $n_0(\xi)=1$ and let $n_{i+1}=\min\{2^k>n_i: e_T(2n_{i+1})>\frac{\xi}{n_{i+1}} \}$. Let $a_{i}(\xi)=\frac{n_{i}}{n_{i-1}}.$ Below, we will suppress $\xi$ in our notation.

 \begin{thm}\label{thm:explicit} Let $T$ be an IET satisfying the Keane condition so that for every $\epsilon>0$ there exists $\xi>0$, $C$ so that
\begin{enumerate}
	\item $a_i\leq i^{\frac{4}{3}}$ for all but finitely many $i$,  
	\item $\limsup_{N \to \infty} \frac{1}{N} \left(\sum_{i=1}^N \log a_i-\sum_{a_i<C}^{N}\log a_i \right)<\epsilon,$ and
	\item $\sum_{k:a_k>k^{\frac{1}{2}}} \frac{\log k}{k^{\frac{2}{3}}} < \infty.$
\end{enumerate}
Then for any Khinchin sequence $\{r_i\}$ and any $a\in[0,1)$ we have 
\begin{equation}\label{eqn:Thm4}
	\lim_{N \to \infty}\frac{\sum_{j=1}^N \chi_{B(a,r_j)}(T^jx)}{\sum_{j=1}^N 2r_j}=1
\end{equation}
for almost every $x$.
\end{thm}

\begin{rem}
Note that for any $\xi<1$, for all rotations $R_\alpha$, if $q_i$ is the denominator of the $i^{th}$ convergent to $\alpha$, then $e_{R_\alpha}(q_i-1) > \frac{\xi}{2q_i}$ (see, e.g., \cite[\S6]{khinchin}). Since $q_{i+1}=a_i q_i + q_{i-1}$ where $\alpha = [a_1, a_2, \ldots]$ is the continued fraction expansion of $\alpha$, our definition of the $a_i$ for Theorems \ref{thm:explicit} and \ref{thm:iet bad approx} (the IET version of Theorem \ref{constant type}) is inspired by the partial fraction expression for rotations.

To see how Theorems \ref{khinchin seq} and \ref{constant type}, stated as they are for rotations, follow from Theorems \ref{thm:explicit} and \ref{thm:iet bad approx} stated for IETs, it is a short exercise to verify that if $2^k < q_j \leq 2^{k+1}$ then $2^{k-1}$ satisfies the inequality required to be an $n_i$ with $\xi\leq \frac{1}{4}$. Therefore, when addressed with the machinery of Theorems \ref{thm:explicit} and \ref{thm:iet bad approx}, new $n_i$ appear whenever a new $q_j$ is reached (except when $a_j=1$, in which case we may have to wait for $q_{j+1}$). The $a_i$ for the IET machinery will be bounded above by a small fixed multiple of the $a_j$ for the continued fraction expansion. Theorems \ref{thm:explicit} and \ref{thm:iet bad approx} still work after accounting for this multiple, proving Theorems \ref{khinchin seq} and \ref{constant type}.
\end{rem}

The proof of Theorem \ref{thm:explicit} proceeds as follows. First, we split up the sum in the numerator of equation \eqref{eqn:Thm4} into sums over disjoint sets of indices. Specifically, let 
\[g_i(x)=\sum_{j=n_i}^{2n_i-1}\chi_{B(\frac 1 2,r_j)}(T^jx).\]
These sums account for much, but not all, of the numerator in equation \eqref{eqn:Thm4}. In Sections \ref{indep} and \ref{sec:abstract} we show that Theorem \ref{thm:explicit} holds if we ignore terms not included in the $g_i$:
\begin{equation}\label{gi result}
	\underset{N \to \infty}{\lim}\frac{\underset{i=1}{\overset{N}{\sum}} g_i(x)}{\underset{i=1}{\overset{N}{\sum}} \int g_i}=1.
\end{equation}

\begin{rem}
Note that throughout the paper, all integrals are taken with respect to the Lebesgue measure on $[0,1]$.
\end{rem}

We prove equation \eqref{gi result} by showing that the $g_i$ satisfy the following version of the strong law of large numbers. Its (standard) proof is included in Section \ref{sec:abstract} for completeness.

\begin{prop}\label{prop:abstract crit} Let $H_{i}:[0,1] \to \mathbb{R}_{\geq 0}$ so that for all $i$ there exists $C_1,C_2$:

\begin{itemize}
	\item[(H1)] $\Vert H_i \Vert_{\infty}<C_1$
	\item[(H2)] $\sum_{i=1}^{\infty} \int H_i=+\infty$
	\item[(H3)] $\sum_{j=i+1}^{\infty} \left|\int H_j(x)H_i(x)-\int H_i(x)\int H_j(x) \right|<C_2 \Vert H_{i-1}(x)\Vert_1$.
\end{itemize}
Then 
\[\underset{N \to\infty} {\lim} \frac{\sum_{i=1}^NH_i(x)}{\sum_{i=1}^N\int H_i(x)}=1\]
for a.e. $x$.
\end{prop}

Property (H3) should be thought of as approximate independence of the $H_i$. Verifying it for $g_i$ is the main work; this is shown in Section \ref{indep}. This approximate independence for $g_i$ comes via Lemma \ref{lem:correl by invar} from an effective equidistribution result on $T$ (Theorem \ref{thm:quant bc}) and approximate $T$-invariance of the $g_i$ (Lemma \ref{lem:almost invar}).

Having established equation \eqref{gi result}, we complete the proof in Section \ref{sec:limsup} by showing that those times not accounted for by the $g_i$ contribute negligibly to equation \eqref{eqn:Thm4}. Let
\[\beta_i(x)=\sum_{j=2n_i}^{n_{i+1}-1}\chi_{B(\frac 1 2,r_j)}(T^jx).\]
We will prove that, for almost every $x$
\[\sum_{i<N}\beta_i(x)=o\left(\sum_{i=1 }^{n_N}\chi_{B(\frac 1 2, r_i)}(T^ix)\right).\]
The $\beta_i$ depend strongly on the $a_i$, and hence on the parameter $\xi$. When we need to make this dependence explicit (for example in the proof of Lemma \ref{lem:translation}), we will write $\beta_i(x,\xi)$ for $\beta_i(x)$.

Before proceeding to the main elements of the proof, we collect a few Lemmas we will need throughout. The first is a straightforward consequence of the Khinchin condition.

\begin{lem}\label{lem:L1 bound}
For any  $n\geq m$, 
\[ \sum_{i=n}^{2n-1} 2r_i \leq \sum_{i=m}^{2m-1} 2r_i.\]
In particular, for all $i\geq j$,
\[ \Vert g_i\Vert_1 \leq \Vert g_j \Vert_1. \]
\end{lem}

We conclude this section with a result used to control $\|g_i\|_\infty$ which we will frequently quote. Since the $\{r_i\}$ are a Khinchin sequence, this Lemma proves that the $g_i$ satisfy property (H1) from Proposition \ref{prop:abstract crit}.

\begin{lem}\label{lem:hit bound}  
$g_i(x)\leq 1+\frac {2n_i} {\xi} 2r_{n_i}$ for all $i$ and $x$.
\end{lem}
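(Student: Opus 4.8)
The plan is to bound $g_i(x) = \sum_{j=n_i}^{2n_i} \chi_{T^{-j}B(1/2,r_j)}(x)$ by counting how many times the forward orbit $T^{n_i}x, T^{n_i+1}x, \dots, T^{2n_i}x$ can land in the shrinking balls $B(1/2, r_j)$. Since $ir_i$ is non-increasing, each ball $B(1/2, r_j)$ with $n_i \le j \le 2n_i$ is contained in $B(1/2, r_{n_i})$, because $r_j \le r_{n_i} \cdot (n_i/j) \le r_{n_i}$; thus it suffices to bound the number of $j \in \{n_i, \dots, 2n_i\}$ for which $T^j x \in B(1/2, r_{n_i})$.

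The key point is that two such returns cannot happen too close together in time. Suppose $T^j x \in B(1/2, r_{n_i})$ and $T^{j'} x \in B(1/2, r_{n_i})$ with $n_i \le j < j' \le 2n_i$ and $0 < j' - j \le n_i$. Then $T^{j'-j}$ maps the point $T^j x$, lying in an interval of length $2r_{n_i}$ around $1/2$, to a point in that same interval. If the interval $B(1/2, r_{n_i})$ contained no discontinuity of $T^{j'-j}$ in its interior, then $T^{j'-j}$ would be a translation on it, and a translation by a nonzero amount cannot send a point of an interval of length $2r_{n_i}$ back into that same interval unless the translation length is less than $2r_{n_i}$ — but on a minimal IET the orbit $T^{j'-j}(1/2)$ cannot equal $1/2$, so one checks that this forces a discontinuity of $T^{j'-j}$ (equivalently, of $T^{n_i}$, since $j'-j \le n_i$ and $e_T$ is non-increasing, $e_T(n_i) \le e_T(j'-j)$ is the wrong direction — rather, any discontinuity of $T^{j'-j}$ with $j'-j\le 2n_i$ is within the relevant range) to lie within distance $2r_{n_i}$ of $1/2$. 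By the defining property $e_T(2n_i) > \xi$...

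Actually the cleaner route: the returns within the window force discontinuities of $T^{2n_i}$ into the interval $(1/2 - 2r_{n_i}, 1/2 + 2r_{n_i})$, and since consecutive discontinuities of $T^{2n_i}$ are at distance at least $e_T(2n_i) > \xi$ apart, the number of such discontinuities in an interval of length $4r_{n_i}$ is at most $1 + 4r_{n_i}/\xi \le 1 + 2n_i \cdot 2r_{n_i}/\xi$ (using $n_i \ge 1$, so $4r_{n_i} \le 4 n_i r_{n_i}$). Pairing each return (after the first) with a distinct discontinuity of $T^{2n_i}$ forced into the neighborhood yields $g_i(x) \le 1 + 2n_i \cdot 2r_{n_i}/\xi$, which is the claimed bound.

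The main obstacle is making precise the combinatorial claim that distinct returns of the orbit to $B(1/2, r_{n_i})$ within a window of length $n_i$ inject into the set of discontinuities of $T^{2n_i}$ lying in a bounded neighborhood of $1/2$: one must verify that if $T^j x$ and $T^{j'} x$ both lie in $B(1/2, r_{n_i})$ then the branch of $T^{j'-j}$ containing $T^j x$ has an endpoint within $2r_{n_i}$ of $1/2$, and that different consecutive pairs $(j, j')$ of returns produce different such endpoints. This is where minimality of $T$ (so that no discontinuity orbits into $1/2$, keeping $e_T$ positive and the branches genuinely non-trivial) and the monotonicity of $e_T$ are used; the rest is the elementary counting argument above.
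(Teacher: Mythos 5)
Your underlying idea --- bound the number of hits in the window $\{n_i,\dots,2n_i\}$ by the separation that $e_T$ imposes on close returns, via Lemma~\ref{interval} --- is the same as the paper's, but your specific route has a gap you flag yourself: the claim that distinct consecutive return pairs inject into distinct discontinuities of $T^{2n_i}$ near $\tfrac12$ is not proven. The heuristic you offer for the forcing step is also not sound as stated: a translation by a nonzero $\delta$ with $|\delta|<2r_{n_i}$ can perfectly well carry a point of $B(\tfrac12,r_{n_i})$ to another point of $B(\tfrac12,r_{n_i})$, and minimality only rules out $T^{j'-j}(\tfrac12)=\tfrac12$, not $T^{j'-j}(\tfrac12)$ being very close to $\tfrac12$; so the absence of a discontinuity of $T^{j'-j}$ near $\tfrac12$ is not contradicted merely by two returns.

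The paper's proof sidesteps all of this by counting through the \emph{target} rather than the orbit: partition $B(\tfrac12,r_{n_i})$ into at most $1+2r_{n_i}/e_T(n_i)$ subintervals of length at most $e_T(n_i)$. For each such $J$, Lemma~\ref{interval} hands you $p\le 0\le q$ with $q-p\ge n_i$ and $T^kJ$ pairwise disjoint on $[p,q)$, so the orbit segment of length $n_i+1$ visits $J$ essentially once; summing over the at most $1+(2n_i/\xi)(2r_{n_i})$ subintervals gives the stated bound with no injectivity argument at all. Separately, you read $e_T(2n_i)>\xi$ off the definition of $n_{i+1}$; this must be a typo for $e_T(2n_{i+1})>\xi/(2n_{i+1})$, since $e_T(n)=O(1/n)$ makes a constant lower bound impossible, and the lemma's own proof uses $e_T(n_i)\geq\xi/(2n_i)$. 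Taking the typo literally is what makes your intermediate count ``$1+4r_{n_i}/\xi$'' appear, and the factor of $n_i$ you then throw away at the end conceals that with the corrected inequality the numbers do not match the claimed bound without a more careful count.
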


The proof relies on:

\begin{lem} \label{interval} (Boshernitzan \cite[Lemma 4.4]{rank2}) 
If $T$ satisfies the Keane condition, then for any interval $J$ with measure $\leq e_T(n+1)$ there exist integers ${p \leq 0 \leq q}$ (which depend on $J$) such that

\begin{enumerate}
	\item[(1)] $q-p \geq n$
	\item[(2)] $T^i$ acts continuously on $J$ for $p \leq i \leq q$
	\item[(3)] $T^i(J) \cap T^j(J)= \emptyset$ for $p \leq i < j \leq q$.
\end{enumerate}
\end{lem}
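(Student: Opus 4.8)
The plan is to attach to $J$ a maximal two‑sided orbit window and then finish by a pigeonhole on discontinuities. Since the discontinuities of $T$ have infinite, distinct orbits, $e_T(m)>0$ for every $m$ and $T$ has no periodic points. Given $J$ with $|J|=e_T(n)$, let $[p,q)$ be the largest block of consecutive integers containing $0$ such that $T^i$ acts continuously on $J$ for every $p\le i<q$ and the intervals $\{T^iJ:p\le i<q\}$ are pairwise disjoint. This window is finite — it consists of disjoint translates of $J$, each of positive length $e_T(n)$, inside a finite‑measure space — and it satisfies $p\le 0\le q$ (the trivial window $\{0\}$ already qualifies), so conditions (2) and (3) of the statement hold by construction. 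It remains to prove $q-p\ge n$.

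Suppose toward a contradiction that $q-p\le n-1$; then $q\le n-1$ and $-p\le n-2$, since $q\ge 1$ and $-p\ge 0$. The window cannot be extended past $q$ for one of two reasons. If $T^q$ fails to be continuous on $J$, then the genuine translate $T^{q-1}J$ — genuine because $T^{q-1}\colon J\to T^{q-1}J$ is a homeomorphism — contains a cut point $c$ of $T$ in its interior; pulling $c$ back through this homeomorphism gives $x_+\in\operatorname{int}(J)$ with $T^{q-1}x_+=c$, so $x_+$ is a backward iterate of a cut point at depth $q-1\le n-2$, hence a discontinuity of $T^{n}$. Otherwise $T^qJ$ meets some $T^iJ$ with $p\le i<q$ while continuity still holds up to index $q$; conjugating this overlap by the homeomorphism $T^{i}$ shows that the orbit of a point of $J$ returns to $J$ in at most $q-i\le n-1$ steps, i.e. the first return time of $J$ is $\le n-1$. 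The same dichotomy at the left end of the window produces either a point $x_-\in\operatorname{int}(J)$ that is a forward iterate of a cut point at depth at most $n-1$ — hence a discontinuity of $T^{-n}$, equivalently of $T^{n}$ in the symmetric sense under which $e_T(n)$ is defined — or again a first return time $\le n-1$.

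Now invoke the companion fact that an interval of length $e_T(n)$ has first return time at least $n$; granting it, both "overlap'' alternatives are excluded, so both ends of the window are of discontinuity type and we obtain $x_+,x_-\in\operatorname{int}(J)$. These points are distinct: if $x_+=x_-$, then this point is simultaneously a backward and a forward iterate of a cut point, and since distinct cut points have distinct orbits it must be a backward and a forward iterate of a single cut point, forcing that cut point to be periodic — impossible by hypothesis. Therefore $\operatorname{int}(J)$ contains two distinct discontinuities of $T^{n}$, which are then at distance strictly less than $|\operatorname{int}(J)|=e_T(n)$; this contradicts the definition of $e_T(n)$ as the minimal gap between discontinuities of $T^{n}$. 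Hence $q-p\ge n$, proving (1).

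The main obstacle is the return‑time input just used: no interval of length $e_T(n)$ can return to itself in fewer than $n$ steps. For a rotation $R_\alpha$ this is exactly the best‑approximation property of continued fractions — the least $t$ with $\|t\alpha\|<e_T(n)$ is a denominator of $\alpha$ larger than $n$ — and for a general interval exchange it is the analogous best‑approximation statement that comes out of the block structure of $T$ (Appendix \ref{symb code}, \cite{rank2}). A secondary point requiring care is to pin down the convention for "discontinuities of $T^{n}$'' so that it is two‑sided enough to count both $x_+$ (a backward iterate of a cut point) and $x_-$ (a forward iterate); with that bookkeeping in place the quantity $e_T(n)$ bounds their separation from below, closing the argument above.
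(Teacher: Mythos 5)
The paper does not supply a proof of this lemma: it simply quotes Boshernitzan's Lemma~4.4 from \cite{rank2}, so there is no internal argument to compare against. Judged on its own terms, your proposal has two real gaps.

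First, the key input --- that an interval of length $e_T(n)$ cannot return to itself in fewer than $n$ steps --- is ``granted'' rather than proved. You correctly note that for rotations this is the best-approximation property of convergents, and then assert an analogous statement for a general IET without argument, deferring to ``the block structure of $T$'' and to \cite{rank2} itself. But for an arbitrary IET this return-time bound carries essentially the same weight as the lemma being proved (it is exactly what keeps condition~(3) from ever being the obstruction to extending the window up to length $n$), and \cite{rank2} is the very source of the lemma. As written the argument is circular at precisely the step that does the work.

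Second, the two discontinuities you extract are not of the same kind, and this is not a harmless convention issue. On the right, $x_+\in T^{-(q-1)}(D)$ with $q-1\le n-1$ (where $D$ is the set of discontinuities of $T$), so $x_+$ is genuinely a discontinuity of $T^n$. On the left, $x_-$ satisfies $T^{p}x_-\in D'$ where $D'$ is the discontinuity set of $T^{-1}$, i.e.\ $x_-$ is a \emph{forward} iterate of a cut point, a discontinuity of $T^{-n}$. The paper's $e_T(n)$ is defined as the minimum gap between discontinuities of $T^n$, which are exactly the backward iterates $\bigcup_{k=0}^{n-1}T^{-k}(D)$. Under the Keane hypothesis the forward and backward orbits of the cut points are disjoint sets, and the minimum distance between a point of one and a point of the other is controlled by something like $e_T(2n-1)$, not by $e_T(n)$, and can be strictly smaller. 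So finding $x_+$ and $x_-$ both in $\operatorname{int}(J)$ with $|x_+-x_-|<|J|=e_T(n)$ does not contradict the definition of $e_T(n)$. You acknowledge this as ``a secondary point requiring care,'' but it is substantive: with the paper's one-sided definition of $e_T(n)$ the contradiction simply does not arrive, and one must either set the argument up so that only backward discontinuities appear, or replace $e_T(n)$ by a genuinely two-sided quantity and check that replacement is compatible with how $e_T$ is used elsewhere in the paper.
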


\begin{rem}
Boshernitzan proves a somewhat stronger result. One can remove the Keane condition assumption and get the same result as long as $J$ does not contain any saddle connections of $T$ (points on the orbit of two distinct discontinuities). The Keane condition implies that there are no saddle connections. 
\end{rem}

\begin{rem}\label{rem:note}
Note that condition (3) implies that $T^i(J) \cap T^j(J)=\emptyset$ for any interval with measure $\leq e_T(n+1)$ and $0<|i-j|\leq n$. 
\end{rem}

\begin{proof}[Proof of Lemma \ref{lem:hit bound}] 
Let $J$ be any interval. By Remark \ref{rem:note}, if $T^jx, T^{j+r}x \in J$, then $|J| > e_T(r+1)$.

Note that for all $x$, 
\[ g_i(x) = \sum_{j=n_i}^{2n_i-1} \chi_{B(\frac{1}{2}, r_j)}(T^jx) \leq  \sum_{j=n_i}^{2n_i-1} \chi_{B(\frac{1}{2}, r_{n_i})}(T^jx). \]
 Partition $B(\frac{1}{2}, r_{n_i})$ into subintervals $J_k$ with measure $\leq e_T(n_i+1)$, using as few intervals as possible. There are $\lceil \frac{2r_{n_i}}{e_T(n_i+1)} \rceil$ such intervals. Since the measure of $J_k$ is $\leq e_T(n_i+1)$, by Lemma \ref{interval}, if $n_i\leq j_1 < j_2 \leq 2n_i-1$, then at most one of $T^{j_1}x$ and $T^{j_2}x$ can lie in $J_k$. Hence
 \[g_i(x) \leq \left\lceil \frac{2r_{n_i}}{e_T(n_i+1)} \right\rceil \leq \frac{2r_{n_i}}{e_T(n_i+1)}+1\leq \frac{2r_{n_i}}{e_{T(2n_i)}}+1\leq \frac{2n_i2r_{n_i}}{\xi}+1.\]
\end{proof}

\begin{rem}\label{rem:no khinchin}
Note that the proof of Lemma \ref{lem:hit bound} uses only that we have a lower bound on $e_T(2n_i)$; the Khinchin condition does not play a role. Its argument will also extend to the setting of Proposition \ref{prop:reasonable i}, and we will use this argument in the proof of Lemma \ref{lem:large a}, using a different lower bound on $e_T$.
\end{rem}

%

\subsection{Estimate on $\mathbf{\int g_i(x)g_j(x)}$}\label{indep}

The goal of this section is to establish property (H3) of Proposition \ref{prop:abstract crit} for the $g_i$.

\begin{prop}\label{prop:correlation} 
There exists $C$ so that for all $j$,
\[\sum_{i=j+1}^{\infty}\left|\int g_ig_j-\int g_i \int g_j \right|<C \Vert g_{j-1}\Vert_1,\] 
where $C$ depends only on $\xi$.
\end{prop}

This proposition asserts `approximate independence' of $g_i$ and $g_j$ as $i$ becomes much larger than $j$. To prove this, when $i$ is sufficiently larger than $j$, we $L^1$-approximate $g_j$ by a function $f_{i,j}$ which is nearly independent from $g_i$. This function will be built using a general result, Lemma \ref{lem:const on blocks}, and a result using the dynamics of $T$, Lemma \ref{lem:dense block}, and it will be constant on certain intervals of $[0,1)$ closely related to the dynamics of $T$. Our first result shows how we can use a property of $g_i$ -- approximate $T$-invariance, established in Lemma \ref{lem:almost invar} -- to prove that $g_i$ is nearly independent from a function like $f_{i,j}$.

\begin{lem}\label{lem:correl by invar}
Assume $h$ is a non-negative function satisfying $\Vert h-h \circ T^i\Vert_1<\delta$ for $i\leq n$ and that $J$ is an interval such that
\[ \left|n|J|-|\{0<i\leq n:T^i(x)\in J\}|\right|<n\delta'.\]
Then
\[\left|\int h \chi_J-|J|\int h \right|\leq \delta'\left(\int h\right)+ \frac{1}{n} \sum_{i=1}^n \int_J |h(x)-h\circ T^i(x)| dx \leq \delta'\left(\int h\right)+\delta .\]
\end{lem}

\begin{proof}
Let $e_i(x)=h(x)-h\circ T^i(x)$. Then $\Vert e_i \Vert_1 < \delta$ for $i\leq n$. We have
\begin{align} 
	\int h(x)\chi_J(x)dx &= \int \frac 1 n \sum_{i=1}^n (h \circ T^i(x)+e_i(x))\chi_{J}(x)dx \nonumber \\
				& \leq \int \frac 1 n \sum_{i=1}^n h \circ T^i(x) \chi_J(x)dx+  \frac{1}{n} \sum_{i=1}^n \int_J |e_i(x)| dx \nonumber \\ 
				& = \int h(x)\frac 1 n \left|\{1\leq i\leq n:T^{-i}(x)\in J\}\right| dx+ \frac{1}{n} \sum_{i=1}^n \int_J |e_i(x)| dx  \nonumber \\
				& \leq (|J|+\delta')\left(\int h(x)dx\right)+\frac{1}{n} \sum_{i=1}^n \int_J |e_i(x)| dx. \nonumber
\end{align}
A similar calculation bounds $\int h \chi_J - |J| \int h$ below. The result follows from this and the bound on $\Vert e_i \Vert_1$.
\end{proof}

We want to apply Lemma \ref{lem:correl by invar} when $h=g_i$. To obtain good bounds, the first step is to establish the approximate $T$-invariance of $g_i$.

\begin{lem} \label{lem:almost invar} 
There exists $C$ so that for every $j$
\[\sum_{k=j+1}^{\infty}\max \{\Vert g_k-g_k \circ T^s\Vert_1:0\leq s<n_{\frac{k+j}2}\}<C \Vert g_{j-1}\Vert_1.\]
\end{lem}

\begin{proof}
For any $M<N$ and $0\leq s < N-M$,
\begin{align}
	\sum_{i=M}^N\chi_{T^{-i}B(\frac {1}{2},r_i)}(x)-&\sum_{i=M}^N\chi_{T^{-i}B(\frac{1}{2},r_i)}(T^sx) \nonumber \\
		& = 	\sum_{i=M}^N\chi_{T^{-i}B(\frac{1}{2},r_i)}(x)-\sum_{i=M}^N\chi_{T^{-i-s}B(\frac{1}{2},r_i)}(x) \nonumber \\
		& = 	\sum_{i=M}^N\chi_{T^{-i}B(\frac{1}{2},r_i)}(x)-\sum_{j=M+s}^{N+s}\chi_{T^{-j}B(\frac{1}{2},r_{j-s})}(x). \nonumber 
\end{align}
We now split the sums above into three parts -- the first $s$ terms of the first sum, the last $s$ terms of the second sum, and the middle terms where the indices in the two sums overlap. After converting the last $s$ terms back to the index $i$, this gives
\begin{align}		
	\sum_{i=M}^N\chi_{T^{-i}B(\frac {1}{2},r_i)}(x)-&\sum_{i=M}^N\chi_{T^{-i}B(\frac{1}{2},r_i)}(T^sx) \nonumber \\
		& = \sum_{i=M}^{M+s-1}\chi_{T^{-i}B(\frac{1}{2} ,r_i)}(x)-\sum_{i=N-s+1}^N\chi_{T^{-i-s}B(\frac{1}{2} ,r_i)}(x) \nonumber \\
		& \ \ \ + \sum_{i=M+s}^{N}\chi_{T^{-i}B(\frac{1}{2} ,r_i)}(x)-\chi_{T^{-i}B(\frac{1}{2} ,r_{i-s})}(x). \nonumber
\end{align}
Since we assume that $r_i$ is non-increasing, the $L_1$ norms of the first two terms are each bounded above by $2sr_M$. The $L_1$ norm of the third term can be bounded using a telescoping sum argument.  All but $2s$ terms cancel, giving a maximum total $L_1$ norm of $4sr_M$.

As $g_k-g_k\circ T^s$ has the above form with $M=n_k$ and $N=2n_k-1$, we prove the desired bound by bounding $8sr_M$ appropriately, then summing over $k\geq j+1$. By definition, $s \leq n_{\lfloor \frac{k+j}{2}\rfloor}$. Therefore, to prove the Lemma, it suffices to bound $\sum_{k=j+1}^\infty 8 n_{\lfloor \frac{k+j}{2} \rfloor} r_{n_k}$.  By construction, $n_{i+1} \geq 2 n_i$ so $n_k \geq 2^{\frac{k-j}{2}} n_{\lfloor \frac{k+j}{2} \rfloor}$ and hence $n_{\lfloor \frac{k+j}{2} \rfloor} \leq \frac{n_k}{\sqrt 2^{k-j}}$. Therefore,
\[ \sum_{k=j+1}^\infty 8 n_{\lfloor \frac{k+j}{2} \rfloor} r_{n_k} \leq 8 \sum_{k=j+1}^\infty \frac{1}{\sqrt 2^{k-j}} n_k r_{n_k}.\]
Since $n_kr_{n_k}$ is non-increasing, for some constant $C$ we have
\[ 8 \sum_{k=j+1}^\infty \frac{1}{\sqrt 2^{k-j}} n_k r_{n_k}\leq 8n_jr_{n_j}  \sum_{k=j+1}^\infty \frac{1}{\sqrt 2^{k-j}} \leq C \|g_{j-1}\|_1.\] \end{proof}

Essentially the same proof also demonstrates the following:

\begin{lem} \label{lem:beta almost invar}
There exists $C$ so that for every $j$
\[\sum_{k=j+1}^{\infty}\max \{\Vert \beta_k-\beta_k\circ T^s\Vert_1:0\leq s<n_{\frac{k+j}2}\}<C \Vert g_{j}\Vert_1.\]
\end{lem}

Theorem \ref{thm:quant bc} establishes convergence of orbit sums for functions that are constant on intervals of continuity of $T^M$ for appropriately chosen $M$. To use this with the $g_i$, we will need that some such function is close to $g_i$. The next two lemmas show this.

Given a finite set $S\subset [0,1]$ let $P_S$ be the finite partition of $[0,1]$ defined by connected components of $[0,1]\setminus S$. As a convention for this partition, we include each point in $S$ itself in the interval to its right (except, of course, when $1\in S$); this particular choice is not important in our arguments.

\begin{lem}\label{lem:const on blocks} 
If $S$ is $\epsilon$-dense then there exists a function $h$ which is constant on each element of $P_S$ and whose $L^1$ difference from $g_i$ is at most $2n_i\epsilon$. Moreover, $h$ can be chosen so that $\|h\|_\infty\leq \|g_i\|_\infty$, $\|h\|_1\leq \|g_i\|_1$, and $h$ can be expressed as the sum of $n_i$ characteristic functions for intervals.
\end{lem}

\begin{proof}
For any interval $J$, there exists some function $\phi$ which is constant on the elements of $P_S$ and such that $\Vert \chi_J - \phi\Vert_1 < 2\epsilon$ and $\Vert \phi\Vert_\infty \leq \Vert \chi_J \Vert_\infty$. Specifically, for each $I\in P_S$, if $I\subset J$, set $\phi=1$ on $I$, otherwise set $\phi=0$ on $I$. Note that $\phi$ is the characteristic function for an interval. The lemma follows because $g_i$ is the sum of $n_i$ characteristic functions of intervals.
\end{proof}

Let $S_{k}$ be the set of  discontinuities of $T^k$. Recall that $d$ is the number of intervals of our IET and that $e_T(n_i)>\frac \xi {2 n_i}$.

\begin{lem}\label{lem:dense block}
$S_{n_{i+d(2-\log_2(\xi))}}$ is $\frac{1}{n_i}$-dense.
\end{lem}

The lemma follows from the following result, which is adapted to our situation. This result uses the \emph{first return map}. Recall that if $G:X \to X$ is a dynamical system and $A \subset X$ then the first return map of $G$ to $A$ is $G|_A:A\to A$ by $G|_A(x)= G^{\min\{\ell>0:G^\ell x \in A\}}(x)$. The numbers $\min\{\ell>0:G^\ell x \in A\}$ are called return times. Recall that the first return map of a $d$-IET to an interval $J$ bounded by adjacent discontinuities of $T^n$ is a $\hat d$-IET for $\hat d\leq d$ and the return time is constant on each interval.

\begin{slem}\label{slem:dense} 
Let $J$ be an $m$-block interval of the $d$-IET $T$. Then at most $d(2-\log_2(\xi))$ of the  $n_i$ satisfy $\frac{1}{|J|}\leq n_i \leq m$.
\end{slem}

\begin{proof}
We assume $\frac{1}{|J|} < m$, as otherwise the statement is trivial.

\textit{Step 1:}  It suffices to show that there exist integers $k_1 \leq \cdots \leq k_{d'}$ with $d'\leq d$ such that $k_1<\frac{1}{|J|}$ and $k_{d'-1}< m \leq k_{d'}$ and such that if $k_j<i<k_{j+1}$ then $e_T(i)<\frac {1}{k_{j+1}}$. To see this,  say $n_l, \ldots, n_{l+c}$ lie in $[k_j, k_{j+1}]$. By the defining condition on the $n_l$ and the condition above, we have $\frac{\xi}{2n_l}<\frac{1}{k_{j+1}}$. Since the $\frac{n_{i+1}}{n_i}\geq 2$, we have 
\[2^cn_l \leq n_{l+c} \leq k_{j+1} < \frac{2}{\xi}n_l.\]
Hence $c < 1-\log_2\xi$ and so at most $2-\log_2\xi$ of the $n_i$ lie in $[k_j,k_{j+1}]$. Since at most $d$ intervals of this form cover $[\frac{1}{|J|},m]$, the result follows.

\textit{Step 2:} Defining a sequence. Consider the sequence of first return times $k_1 \leq \cdots \leq k_{d'}$ for the first return map $T|_J$. We note that the Keane condition guarantees that there must be a return time $\geq m$. To see this, recall that the Keane condition implies minimality, and then examine a point in the interior of $J$ which takes the minimum time to hit a discontinuity of $T$ (besides a discontinuity at an endpoint of $J$). This must occur before returning to $J$ (by the choice of the point) but also at time $\geq m$ (as $J$ is an $m$-block interval).

\textit{Step 3:} The sequence we defined satisfies the sufficient condition in Step 1.  Note that $J=[T^{-K}\delta,T^{-L}\delta')$ where $\delta,\delta'$ are either $0,1$ or discontinuities of $T$. Moreover, for any discontiuity $\delta''$, $T^{-r}\delta''\in Int(J)$ (the interior of $J$) implies $r\geq m$ since $J$ is an $m$-block. Write $I_i$ for the interval with return time $k_i$.

It is clear that the smallest return time, $k_1$, satisfies $k_1 < \frac 1 {|J|}$. Since $k_1<m$, by the remark above, the boundary point of $I_1$ in $Int(J)$ must be in the orbit of $\delta,\delta'$, because $T^i$ acts continuously on $J$ for $0\leq i<m$. Therefore, it is either $T^{-k_1-K}\delta$ or $T^{-k_1-L}(\delta')$. Without loss of generality, let us assume it is $T^{-k_1-K}\delta$. Pushing $J$ forward by $T^K$, we see that $T^{k_1+K}J$ intersects $T^KJ$. Let $K_1$ be the subinterval of $T^KJ$ which returns to $J$ after $k_1$ iterates of $T$ and $K_0$ the other subinterval. Note that $K_0$ is an $k_1$-block, and so $e_T(k_1)$ is bounded above by its length. Since $T^{-K}K_0$ has not returned to $J$ after $k_1$ iterates, the first return time for any of its points is $k_2$. As above, this implies that $|K_0|<\frac{1}{k_2}$. Therefore $e_T(k_1)<\frac{1}{k_2}$.

The argument above may be continued inductively, considering always the points which have not yet returned to $J$, as long as $k_i<m$. Therefore we have constructed the desired integers $k_i$ and the sublemma is proved.
\end{proof}
\begin{proof}[Proof of Lemma \ref{lem:dense block}]
Let $m = n_{i+d(2-\log_2(\xi))}$ and suppose, towards a contradiction, that $S_{m}$ is not $\frac 1 {n_i}$-dense and so there exists an $m$-block interval $J$ with $|J|\geq \frac{1}{n_i}$. Then $\frac{1}{|J|}\leq n_i<m$. Applying Sublemma \ref{slem:dense}, there can be at most $d(2-\log_2(\xi))$ of the $n_j$ between $\frac{1}{|J|}$ and $m$ (inclusive). But note that $n_i\geq \frac 1 {|J|}$ and so is one of these $n_j$. Therefore, $m \leq n_{i-1+d(2-\log_2(\xi))}$, which contradicts the definition of $m$. 
\end{proof}

For use in Section \ref{sec:reasonable}, we record an analogue of Lemma \ref{lem:dense block} which holds under the `badly approximable' assumption of Theorems \ref{constant type} and \ref{thm:iet bad approx}.

\begin{lem}\label{lem:dense2}
If there exists some $\sigma>0$ such that $e_T(n)>\frac{\sigma}{n}$ for all $n$, then there exists some $K>0$ such that $\{T^i x\}_{i=1}^n$ is $\frac{K}{n}$-dense for all $x, n$.
\end{lem}

\begin{proof}
By choosing $\xi<\frac\sigma2$, we may choose $n_i=2^i$. By the previous lemma $\{x,...,T^nx\}$ is $2^{-[m-d(2-\log_2(\xi))]}$-dense where $m=\lfloor\log_2(n)\rfloor$. The lemma follows.
\end{proof}

We now prove Proposition \ref{prop:correlation}.

\begin{proof}[Proof of Proposition \ref{prop:correlation}]

Let $u=\max(\{j\} \cup \{i: n_i<\frac{1}{r_{n_j}}\})$. Let $v=d(2-\log_2(\xi))+\hat{q}$ where $\hat q$ is provided by Theorem \ref{thm:quant bc}. We divide up our sum as follows:
\begin{align}
	\sum_{i=j+1}^{\infty}\Big|\int g_ig_j-&\int g_j \int g_i \Big|  \nonumber \\
		& = \sum_{j < i \leq u }\Big|\int g_ig_j-\int g_j \int g_i \Big|+\sum_{u<i\leq u+4v} \Big|\int g_i g_j-\int g_i \int g_j \Big| \nonumber \\
		& +\sum_{i>u+4v} \Big|\int g_ig_j-\int g_j \int g_i \Big|.  \nonumber
\end{align}

\textit{Step 1:} We estimate the first term, in the case $j<u$ (otherwise there is no contribution from this term). By Lemma \ref{interval},  $T^{s+\ell}x \notin B(T^\ell x,r_\ell)$ for all $s$ so that $e_T(s+1)\geq 2r_\ell$. By definition of the $n_i$ and the choice of $u$, $e_T(2n_u) > \frac{\xi}{2n_u} > \frac{\xi}{2}r_{n_j}$. Then for any $\ell\geq n_j$, there are at most $1+4r_{n_j}/(\frac{\xi}{2}r_{n_j}) = 1+\frac{8}{\xi}$ disjoint intervals of size $\frac{\xi}{2n_u}$ intersecting $B(T^\ell x, 2r_\ell)$. It follows that if $\ell\geq n_{j}$ then 
\begin{equation} \label{eq:indep L inf bound}
	\left|\{s<2n_u:T^sx\in B(T^\ell x,2r_\ell)\}\right|\leq 1+\frac{8}{\xi}.
\end{equation} 
For any $s\in [n_i, 2n_i)$ and $\ell\in [n_j, 2n_j)$, $s>\ell$, $r_s<r_l$ and therefore $B(T^sx,r_s) \cap B(T^\ell x, r_\ell)\neq \emptyset$ only if $T^sx \in B(T^\ell x, 2r_\ell)$. Therefore,
\begin{equation}\label{eq:step1 1}
 \sum_{j< i \leq u} \int g_i g_j \leq \left(1+\frac{8}{\xi}\right) \Vert g_j\Vert_1 \leq \left(1+\frac{8}{\xi}\right) \Vert g_{j-1}\Vert_1
 \end{equation}
using Lemma \ref{lem:L1 bound}.

Now we bound $\sum_{j<i \leq u} \int g_i\int g_j$ above in terms of $\Vert g_{j-1}\Vert_1$.  Observe that as $n_{i+1}\geq 2n_i$ and under the assumption that $j<u$,  $u-j\leq  \log_2(\frac{1}{n_jr_{n_j}})$. We can straight-forwardly bound $\Vert g_j \Vert_1 \leq 2n_j r_{n_j}$. This implies that 
\[ \log_2\left(\frac{2}{\Vert g_j \Vert_1}\right) \geq \log_2\left(\frac{1}{n_j r_{n_j}}\right) \geq u-j. \]
Therefore, using Lemma \ref{lem:L1 bound},
\begin{align}
	 \sum_{j<i\leq u} \Vert g_j \Vert_1 \Vert g_i \Vert_1 &\leq \Vert g_j \Vert_1 \sum_{j<i\leq u} \Vert g_j \Vert_1 \nonumber \\
	 			&  \leq \Vert g_j \Vert_1 \log_2\left(\frac{2}{\Vert g_j \Vert_1} \right) \Vert g_j \Vert_1 \nonumber \\
				&  \leq 2 \Vert g_j \Vert_1 \leq 2 \Vert g_{j-1}\Vert_1. \label{eq:step1 2}
\end{align}

Combining \eqref{eq:step1 1} and \eqref{eq:step1 2}, by the triangle inequality we have 
$$ \sum_{j < i \leq u }\Big|\int g_ig_j-\int g_j \int g_i \Big|\leq (2+1+\frac 8 \xi)\Vert g_{j-1} \Vert.$$
\vspace{.25cm}

\textit{Step 2:} We estimate the second term. By Lemma \ref{lem:hit bound} we have that there exists $D$ independent of $i,j$ with $\|\sum_{u<i\leq u+4v} g_i(x)\|_\infty<D$. We can then apply the triangle and H\"older inequalities as follows:
\begin{align}
	\sum_{u<i\leq u+4v} \Big|\int g_i g_j-\int g_i \int g_j \Big| &\leq \sum_{u<i\leq u+4v} \Big|\int g_i g_j| + \Big| \int g_i \int g_j \Big| \nonumber \\
			& \leq \sum_{u<i\leq u+4v} \|g_i\|_\infty \| g_j\|_1 + \|g_i\|_\infty \|g_j\|_1 \nonumber \\
			& \leq 2 \|g_j\|_1 \sum_{u<i\leq u+4v} \|g_i\|_\infty < 2D\|g_j\|_1 \leq 2D\|g_{j-1}\|_1, \nonumber
\end{align}
as desired, again using Lemma \ref{lem:L1 bound} at the last step.
\vspace{.25cm}

\textit{Step 3:}
We estimate the third term. To do this we will use Lemma \ref{lem:correl by invar} to show that $g_i$ is nearly independent from $f_{i,j}$, a function that is close to $g_j$ and is constructed with the help of Lemma \ref{lem:dense block}. We will then show that $g_i$ and $g_j$ are nearly independent, as desired.

For fixed $i,j$ with $i>u+4v$, let $b = \frac{3u+i}{4}$. Note that as $i>4v+u$ in these sum terms, $b-v>u$. Let $S_{i,j}$ be the set of discontinuities of $T^{n_b}$; in the terminology of Lemma \ref{lem:dense block}, $S_{i,j} = S_{n_b}$. By Lemma \ref{lem:dense block}, $S_{i,j} = S_{n_{b-v+\hat q+d(2-\log_2\xi)}}$ is $\frac{1}{n_{b-v+\hat q}}$-dense, and hence $\frac{1}{n_{b-v}}$-dense. As $n_{k+1}\geq 2n_k$, we have $n_{b-v} \geq n_{u+1} 2^{b-v-u-1} =n_{u+1} 2^{\frac{i-u}{4}-v-1}$. Then
\[ \frac{1}{n_{b-v}} \leq \frac{1}{n_{u+1}}2^{-\frac{i-u}{4}+v+1}\]
for all $i>u+4v$.

Applying Lemma \ref{lem:const on blocks} to $g_j$ using the $\frac{1}{n_{b-v}}$-dense set $S_{i,j}$, we obtain a function $f_{i,j}$ which is constant on each element of the partition by $S_{i,j}$ and such that 
\begin{equation}\label{eq:indep close bound} 
	\Vert f_{i,j} - g_j \Vert_1 \leq 2n_j \frac{1}{n_{u+1}} 2^{-\frac{i-u}{4}+v+1} \ \ \mbox{ and } \ \  \Vert f_{i,j} \Vert_\infty \leq \Vert g_j \Vert_\infty.
\end{equation}
In addition, $f_{i,j}=\sum_{l=1}^R \alpha_l \chi_{J_l}$ where $J_l$ are disjoint intervals from the partition by $S_{i,j}$ and $\alpha_l \geq 0$.

We have the following lower bound on $\| g_{j-1}\|_1$: $\| g_{j-1}\|_1 \geq 2n_{j-1}r_{2n_{j-1}} \geq n_j r_{n_j}$ using the Khinchin condition. By our choice of $u$, $n_{u+1} \geq \frac{1}{r_{n_j}}$, so $\| g_{j-1}\|_1 \geq n_j \frac{1}{n_{u+1}}.$  From this and \eqref{eq:indep close bound} we obtain
\begin{equation}\label{eq:f close to g}
 	\Vert f_{i,j} - g_j \Vert_1 \leq K' 2^{-\frac{i-u}{4}} \Vert g_{j-1} \Vert_1
\end{equation}
for a constant $K'$ independent of $j$.

We apply Theorem \ref{thm:quant bc}, to an arbitrary $n_b$-block interval $J$ as follows. Let $L=\frac{i-u}{4}-\hat q$; then $n_{b+\hat q +L} = n_{\frac{i+u}{2}}$. Since $i>u+4v$ in this step, using the definition of $v$ we see that $L>0$ as necessary for Theorem \ref{thm:quant bc}. Then applying the theorem (with $b$ here in the role of Theorem \ref{thm:quant bc}'s $i$), we obtain for any $x, x'$ and any such $J$:
\[ \Big| \sum_{j=1}^{n_{\frac{i+u}{2}}} \chi_J(T^jx) - \chi_J(T^jx') \Big|<  n_{\frac{i+u}{2}} C_1 e^{-C_2(\frac{i-u}{4}-\hat{q})} |J|\leq  n_{\frac{i+u}{2}} C_1' e^{-C_2\frac{i-u}{4}} |J| \]
with constants $C_1$ and $C_2$ as in Theorem \ref{thm:quant bc} and $C_1'=C_1e^{C_2\hat q}$.

Since this bound holds for all $x'$, it holds when we replace the $x'$ term with its average over all $x'$. We obtain
\[ \Big| \big| \{ 0 < k \leq n_{\frac{i+u}{2}} : T^kx\in J \} \big|  - n_{\frac{i+u}{2}} |J| \Big| <  n_{\frac{i+u}{2}} C_1' e^{-C_2\frac{i-u}{4}}|J|\]
for positive constants $C_1'$ and $C_2$ which are independent of $j$.

Now consider
\begin{align}
	\left| \int g_i f_{i,j} - \int g_i \int f_{i,j} \right| &= \left| \sum_{l=1}^R \alpha_l \left( \int g_i \chi_{J_l} - |J_l| \int g_i  \right)\right| \nonumber \\
		& \leq \sum_{l=1}^R \alpha_l \left| \int g_i \chi_{J_l} - |J_l| \int g_i \right|. \nonumber
\end{align}

Applying Lemma \ref{lem:correl by invar} with $h = g_i$, $n=n_{\frac{i+u}{2}}$ and $\delta' = C_1' e^{-C_2\frac{i-u}4} |J_l|$, we obtain
\begin{align}
	\left|  \int g_i f_{i,j} - \int g_i \int f_{i,j} \right| &\leq \sum_{l=1}^R \alpha_l \left( C_1' e^{-C_2\frac{i-u}4} |J_l|\cdot \|g_i\|_1 + \frac{1}{n_{\frac{i+u}{2}}} \sum_{k=1}^{n_{\frac{i+u}{2}}} \int_{J_l} |g_i-g_i\circ T^k| dx \right) \nonumber \\
			& = \|f_{i,j}\|_1 C_1' e^{-C_2\frac{i-u}{4}}\Vert g_{j-1} \Vert_1 + \frac{1}{n_{\frac{i+u}{2}}} \sum_{k=1}^{n_{\frac{i+u}{2}}} \sum_{l=1}^R \alpha_l \int_{J_l} |g_i-g_i\circ T^k| dx \nonumber \\
			& \leq \|f_{i,j}\|_1 C_1' e^{-C_2\frac{i-u}{4}}\Vert g_{j-1} \Vert_1 + \| f_{i,j}\|_\infty \frac{1}{n_{\frac{i+u}{2}}} \sum_{k=1}^{n_{\frac{i+u}{2}}} \| g_i-g_i\circ T^k \|_1 \nonumber \\
			&\leq D' C_1' e^{-C_2\frac{i-u}{4}}\Vert g_{j-1} \Vert_1 + D' c_{i,j} \label{eqn:g f correl}
\end{align}
because $\| f_{i,j}\|_\infty \leq \|g_j\|_\infty\leq D'$ independent of $j$ by Lemma \ref{lem:hit bound}, and the $J_l$ are disjoint, and where
\[c_{i,j} = \max \left\{ \Vert g_i - g_i\circ T^k \Vert_1 : 0 \leq k \leq n_{\frac{i+u}{2}} \right\}. \] 
We have also used that Lemma \ref{lem:L1 bound} implies $\Vert g_i \Vert_1 \leq \Vert g_{j-1} \Vert_1$.

By Lemma \ref{lem:almost invar}, 
\[ \sum_{i>j} c_{i,j}  \leq D \Vert g_{j-1} \Vert_1 \]
for a constant $D$ independent of $j$. Combining this and equation \eqref{eqn:g f correl}, we get
\begin{equation}\label{eqn:sum g f}
	\sum_{i>u+4v} \left| \int g_i f_{i,j} - \int g_i \int f_{i,j} \right| < \hat D \Vert g_{j-1} \Vert_1
\end{equation}
for some $\hat D>0$ independent of $j$.

Then we have
\begin{align}
	\Big| \int g_i g_j &- \int g_i \int g_j \Big| \nonumber \\
			& \leq \Big| \int g_i g_j - \int g_i f_{i,j} \Big| + \Big| \int g_i f_{i,j} - \int g_i \int f_{i,j} \Big| + \Big| \int g_i \int f_{i,j} - \int g_i \int g_j \Big| \nonumber \\
			& \leq \Vert g_i\Vert_\infty \Vert g_j-f_{i,j} \Vert_1 + \Big| \int g_i f_{i,j} - \int g_i \int f_{i,j} \Big| + \Vert g_i \Vert_\infty \Vert f_{i,j} - g_j \Vert_1 \nonumber \\
			& \leq K'' 2^{-\frac{i-u}{4}} \Vert g_{j-1}\Vert_1 + \Big| \int g_i f_{i,j} - \int g_i \int f_{i,j} \Big| + K'' 2^{-\frac{i-u}{4}} \Vert g_{j-1} \Vert_1 \nonumber
\end{align}
for some $K''$ independent of $j$. The last inequality uses: Lemma \ref{lem:hit bound} to bound $\Vert g_i \Vert_\infty$ and \eqref{eq:f close to g} to bound $\Vert f_{i,j}-g_j\Vert_1$. Summing the above expression over the relevant $i$ and using equation \eqref{eqn:sum g f}, we get 
\[  \sum_{i>u+4v} \left| \int g_i g_j - \int g_i \int g_j \right| < D'' \Vert g_{j-1} \Vert_1 \]
for a constant $D''$ independent of $j$, as desired. This completes the proof.
\end{proof}

%

\subsection{Abstract setting: Proof of Proposition \ref{prop:abstract crit}}\label{sec:abstract}

We prove Proposition \ref{prop:abstract crit} below. First, we introduce some notation.

Let $H_i$ be as in Proposition \ref{prop:abstract crit}. Recall that these nonnegative random variables satisfy the following criteria for all $i$: 

\begin{itemize}
	\item[(H1)] $\Vert H_i \Vert_\infty < C_1$
	\item[(H2)] $\sum_{i=1}^\infty \int H_i = \infty$
	\item[(H3)] $\sum_{j=i+1}^\infty |\int H_iH_j - \int H_i \int H_j| < C_2 \Vert H_{i-1} \Vert_1$.
\end{itemize}
From (H1) it is immediate that $\Vert H_i \Vert_1<C_1$.

Let $F_i=H_i-\int H_i.$ Observe that $F_i$ satisfies the following for all $i$:
\begin{itemize}
	\item [(F1)] $\int F_i=0$
	\item [(F2)] $\Vert F_i\Vert_{\infty}<\Vert H_i \Vert_\infty < C_1$
	\item [(F3)] $\sum_{j=i+1}^{\infty}|\int F_iF_j|<C_2 \Vert H_{i-1}\Vert_1$
\end{itemize}
Again, it is easy to see that $\Vert F_i \Vert_1<2\Vert H_i \Vert_1$.

Let $m_0=0$ and define $m_k$ inductively by $m_{k+1}=\min \{i:\sum_{j=m_k+1}^{i}  \Vert H_j \Vert_1 \geq 1\}.$ Condition (H2) guarantees the existence of $m_k$ for all $k$. From this definition, and from the fact, noted above, that $\Vert H_i \Vert_1 <C_1$ for all $i$, we have that 
\begin{equation}\label{eqn:mk bounds}
	1 \leq \sum_{i=m_k+1}^{m_{k+1}} \Vert H_i \Vert_1 < C_1+1.
\end{equation}

For the proof of Proposition \ref{prop:abstract crit} we use the following two classical results:

\begin{lem} (Chebyshev's inequality) Let $R$ be a random variable with $\int R d\mu=0$ and finite variance. Then $\mu(\{\omega: R(\omega)>c\}) \leq \frac{\int R^2 d\mu}{c^2}.$
\end{lem}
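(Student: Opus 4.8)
The plan is to give the standard truncation argument. We take $c>0$; this is the case invoked in the applications, and some such restriction is needed, since for $c\le 0$ the constant random variable $R\equiv 0$ already violates the stated inequality. Set $A=\{\omega:\ R(\omega)>c\}$. The finite-variance hypothesis (together with $\int R\,d\mu=0$) is what lets us call $\int R^2\,d\mu$ the variance of $R$ and ensures the right-hand side is finite; the derivation itself uses only $R^2\ge 0$ and $R^2\in L^1(\mu)$.

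First I would discard the complement of $A$: since $R^2\ge 0$ pointwise,
$$\int R^2\,d\mu\ \ge\ \int_A R^2\,d\mu.$$
Next, on $A$ we have $R(\omega)>c>0$, hence $R(\omega)^2>c^2$ for every $\omega\in A$, so that
$$\int_A R^2\,d\mu\ \ge\ \int_A c^2\,d\mu\ =\ c^2\,\mu(A).$$
Combining the two displays gives $\int R^2\,d\mu\ge c^2\,\mu(A)$, and dividing by $c^2>0$ yields $\mu(A)\le \int R^2\,d\mu/c^2$, which is exactly the assertion.

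There is no genuine obstacle here: the proof is two applications of monotonicity of the integral. The only points that deserve a word of care are the sign condition $c>0$ (needed so that $R>c$ forces $R^2>c^2$) and the integrability of $R^2$ (needed so the bound is not vacuous). Equivalently one may view the statement as Markov's inequality applied to the nonnegative random variable $R^2$ at the level $c^2$, but the direct truncation above is just as quick.
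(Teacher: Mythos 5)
Your proof is correct and is the standard argument; the paper itself states Chebyshev's inequality as a classical result without supplying a proof, so there is nothing to compare against beyond noting that your truncation argument (equivalently, Markov applied to $R^2$) is exactly the textbook derivation. Your remark that $c>0$ is tacitly required is accurate and a reasonable point of care.
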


\begin{lem}(Borel-Cantelli) If $A_1,...$ are $m$-measurable sets and $\sum_{i=1}^{\infty}m(A_i)<\infty$ then $m(\{x: x\in A_i \text{ for infinitely many }i\})=0.$
\end{lem}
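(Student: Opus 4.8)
The plan is to give the standard, short measure-theoretic argument. Set $A = \{x : x \in A_i \text{ for infinitely many } i\}$. The key observation is the set-theoretic identity
$$
A = \bigcap_{n=1}^{\infty} \bigcup_{i \geq n} A_i,
$$
since $x$ lies in infinitely many $A_i$ exactly when, for every threshold $n$, there is still some $i \geq n$ with $x \in A_i$. First I would fix an arbitrary $n \in \mathbb{N}$ and observe that $A \subseteq \bigcup_{i \geq n} A_i$, hence by monotonicity and countable subadditivity of the measure $m$,
$$
m(A) \leq m\Big( \bigcup_{i \geq n} A_i \Big) \leq \sum_{i=n}^{\infty} m(A_i).
$$

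The second step is to let $n \to \infty$. Since $\sum_{i=1}^{\infty} m(A_i) < \infty$ by hypothesis, the tail sums $\sum_{i=n}^{\infty} m(A_i)$ converge to $0$ as $n \to \infty$ (they are the tails of a convergent series of nonnegative terms). Because the bound $m(A) \leq \sum_{i=n}^{\infty} m(A_i)$ holds for every $n$, taking the infimum over $n$ gives $m(A) \leq 0$, and since $m$ is nonnegative we conclude $m(A) = 0$, as desired.

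There is essentially no obstacle here: the only points requiring care are that $m$ need only be assumed countably subadditive (which holds for any measure) and that $A$ is measurable, which follows from the displayed identity expressing $A$ as a countable intersection of countable unions of the measurable sets $A_i$. If one prefers to avoid invoking subadditivity directly, an equivalent route is to integrate: $\int \sum_{i=1}^\infty \chi_{A_i}\, dm = \sum_{i=1}^\infty m(A_i) < \infty$ by the monotone convergence theorem, so the nonnegative function $\sum_i \chi_{A_i}$ is finite $m$-almost everywhere; but the set where it is infinite is precisely $A$, so $m(A)=0$. I would present the first version as the main line since it is the most elementary.
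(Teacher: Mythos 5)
Your proof is correct and is the standard argument; the paper itself does not prove this lemma but simply quotes it as a classical result alongside Chebyshev's inequality, so there is nothing in the paper to compare against. Both of your routes (monotonicity plus countable subadditivity applied to the tail unions, or the monotone-convergence/integration argument) are sound and are the usual proofs found in any measure theory text.
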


We will prove that 
\begin{equation}\label{eqn:Fi growth} 
	\lim_{N\to \infty} \frac{\sum_{i=1}^N F_i(x)}{\sum_{i=1}^N \int H_i} = 0
\end{equation}
for a.e. $x$, which implies Proposition \ref{prop:abstract crit}. Indeed \eqref{eqn:Fi growth} implies that 
$$\underset{N \to \infty}{\lim}\frac{\sum_{i=1}^N H_i(x)}{\sum_{i=1}^N \int H_i}=\underset{N \to \infty}{\lim}
\frac{\sum_{i=1}^N F_i(x)}{\sum_{i=1}^N\int  H_i}+\frac{\sum_{i=1}^N \int H_i}{\sum_{i=1}^N \int H_i}=1.$$ 
Our proof is in two steps. First, we prove that \eqref{eqn:Fi growth} holds along the subsequence $\{ m_{N^2} \}_{N\in\mathbb{N}}$.

\begin{lem}\label{lem:conv on sq} 
\[ \underset{N \to \infty}{\lim} \frac{\sum_{i=1}^{m_{N^2}}F_i(x)}{N^2}=0 \]
for a.e. $x$.
\end{lem}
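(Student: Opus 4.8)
The plan is to apply Chebyshev's inequality together with Borel--Cantelli along the subsequence $m_{N^2}$, exploiting the weak correlation hypothesis (3) on the $F_i$. First I would set $S_N = \sum_{i=1}^{m_{N^2}} F_i(x)$ and compute the second moment $\int S_N^2$. Expanding the square gives $\int S_N^2 = \sum_{i=1}^{m_{N^2}} \int F_i^2 + 2\sum_{i<j\leq m_{N^2}} \int F_i F_j$. For the diagonal terms, since $\|F_i\|_\infty < C_2$ and $\int F_i = 0$ we have $\int F_i^2 \leq \|F_i\|_\infty \|F_i\|_1 \leq C_2 \|F_i\|_1$, so $\sum_{i=1}^{m_{N^2}} \int F_i^2 \leq C_2 \sum_{i=1}^{m_{N^2}} \|F_i\|_1$. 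For the off-diagonal terms, summing hypothesis (3) over $i$ gives $2\sum_{i<j\leq m_{N^2}} \int F_i F_j \leq 2\sum_{i=1}^{m_{N^2}} \sum_{j=i+1}^{\infty} \int F_i F_j \leq 2C_1' \sum_{i=1}^{m_{N^2}} \|F_i\|_1$. (Note that $\int F_i F_j$ may be negative, but the correlation bound only controls it from above; for the lower bound one uses $\int F_i F_j \geq -\|F_i\|_\infty\|F_j\|_1$ or symmetry — this needs a little care, see below.) By the definition of $m_k$, each block $\sum_{i=m_{k-1}+1}^{m_k} \int |g_i|$ is comparable to $1$ (it is at least $1$ by definition, and at most $1 + C_2$ since one more term of $\infty$-norm $\leq C_2$ is added), and $\|F_i\|_1 \leq \int |g_i| + \int g_i \leq 2\int g_i$, so $\sum_{i=1}^{m_{N^2}} \|F_i\|_1 = O(N^2)$. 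Hence $\int S_N^2 = O(N^2)$.

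Next I would invoke Chebyshev: for any $\epsilon > 0$,
\[
m\Big(\Big\{ x : \Big| \tfrac{S_N(x)}{N^2} \Big| > \epsilon \Big\}\Big) \leq \frac{\int S_N^2}{\epsilon^2 N^4} = O\Big(\frac{1}{\epsilon^2 N^2}\Big).
\]
Since $\sum_N N^{-2} < \infty$, Borel--Cantelli implies that for a.e.\ $x$ we have $|S_N(x)/N^2| \leq \epsilon$ for all sufficiently large $N$. Taking $\epsilon = 1/k$ over $k \in \mathbb{N}$ and intersecting the resulting full-measure sets yields $S_N(x)/N^2 \to 0$ a.e., which is exactly the claim.

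The one genuine subtlety — and the step I expect to require the most care — is handling the sign in the off-diagonal sum, since hypothesis (3) is a one-sided bound on $\sum_{j>i} \int F_i F_j$ and does not immediately control $\sum_{j>i} (\int F_i F_j)^-$. The clean fix is to symmetrize: $\int F_i F_j = \int g_i g_j - \int g_i \int g_j$, and since $g_i, g_j \geq 0$ we have $\int g_i g_j \geq 0$ and $\int g_i \int g_j \geq 0$, giving the crude two-sided bound $-\int g_i\int g_j \leq \int F_i F_j \leq \int g_i g_j \leq \|g_i\|_\infty \|g_j\|_1 \leq C_2 \|g_j\|_1$. Summing the lower bound over $i < j \leq M := m_{N^2}$ gives $\sum_{i<j} \int F_i F_j \geq -\sum_{i<j} \int g_i \int g_j \geq -\tfrac12 (\sum_{i\leq M} \int g_i)^2$, which is $O(N^4)$ — too weak by itself. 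So for the lower tail one instead writes $\sum_{i<j} \int F_i F_j = \tfrac12\big(\int S_N^2 - \sum_i \int F_i^2\big)$ directly and uses hypothesis (3) (applied to the tail sums, valid since the series there converge) to get the upper bound on $\int S_N^2$ that we need; the lower bound $\int S_N^2 \geq 0$ is automatic. Thus only the upper estimate on $\int S_N^2$ is actually required for Chebyshev, and hypothesis (3) plus the diagonal bound supply it. With this observation the argument goes through as outlined.
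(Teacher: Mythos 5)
Your proposal follows essentially the same route as the paper: bound the variance $\int\bigl(\sum_{i=1}^{m_{N^2}}F_i\bigr)^2$ linearly in $N^2$ using the $L^\infty$ bound and the one-sided correlation hypothesis, apply Chebyshev along the subsequence $m_{N^2}$, and conclude by Borel--Cantelli. Your handling of the diagonal via $\int F_i^2\leq \|F_i\|_\infty\|F_i\|_1\leq C_2\|F_i\|_1$ is in fact the cleaner way to get the $O(N^2)$ bound (the paper's stated bound of $m_M\max\{C_2^2,2\}$ for the diagonal would only give $O(M)$ when $m_M=O(M)$, which is the version of the estimate your computation actually delivers). Your extended worry about the sign in the off-diagonal sum is a real but minor subtlety that the paper glosses over in the same way; the resolution you ultimately land on --- that only an upper bound on $\int S_N^2$ is needed since $\int S_N^2\geq 0$ is automatic, and hypothesis (3) is exactly a one-sided bound --- is correct and is the right thing to say. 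In short: same proof, with slightly more explicit bookkeeping.
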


Note that by \eqref{eqn:mk bounds}, $\sum_{i=1}^{m_{N^2}} \Vert H_i \Vert_1 \geq N^2$, so Lemma \ref{lem:conv on sq} implies Proposition \ref{prop:abstract crit} for this subsequence.

\begin{proof} 

Consider, for any $M$, the mean-zero random variable $\sum_{i=1}^{m_M} F_i(x)$. We want to bound its second moment.
\[ \int\left(\sum_{i=1}^{m_M} F_i(x)\right)^2dx = \int\left( \sum_{i=1}^{m_M} F_i(x)^2 + 2 \sum_{1\leq i<j\leq m_M} F_i(x)F_j(x) \right)dx\]

First,
\begin{align}
	\sum_{i=1}^{m_M} \int F_i(x)^2 dx & \leq \sum_{i=1}^{m_M} \Vert F_i \Vert_\infty \Vert F_i \Vert_1 \nonumber \\
				& < C_1 \sum_{i=1}^{m_M} \Vert F_i\Vert_1 < 2C_1 \sum_{i=1}^{m_M} \Vert H_i \Vert_1 < 2C_1(C_1+1) M. \nonumber
\end{align}
using the H\"older inequality, our bounds on $\Vert F_i \Vert_*$, and equation \eqref{eqn:mk bounds}.

Second,
\begin{align}
	\Big| 2\sum_{1\leq i<j\leq m_M} \int F_i(x)F_j(x) dx \Big|& \leq  2 \sum_{i=1}^{m_M-1} \Big| \sum_{j=i+1}^{m_M} \int F_i(x)F_j(x) dx\Big| \nonumber \\
						& \leq 2\sum_{i=1}^{m_M} C_2 \Vert H_{i-1}\Vert_1 < 2C_2(C_1+1)M \nonumber
\end{align}
using property (F3) and equation \eqref{eqn:mk bounds}.

We conclude that $\int (\sum_{i=1}^{m_M} F_i(x))^2dx < \tilde C M$ for some positive constant $\tilde C$ and all $M$.

Now, by Chebyshev, for each $N$ and any $\delta>0$,
\[ m\Big( \big\{ x: \Big|\sum_{i=1}^{m_{N^2}} F_i(x) \Big| > \delta N^2\big\}\Big)  <  \frac{\tilde C N^2}{\delta^2 N^4} = \frac{\tilde C}{\delta^2 N^2}.\]
Let $A_N = \{ x: |\sum_{i=1}^{m_{N^2}} F_i(x) | > \delta N^2\}$. By the above, this sequence of sets has summable measure, so by Borel-Cantelli, for almost all $x$,
\[ \limsup_{N\to \infty} \frac{|\sum_{i=1}^{m_{N^2}} F_i(x) |}{N^2} =0\]
proving the lemma.
\end{proof}

We are now ready to prove Proposition \ref{prop:abstract crit}.

\begin{proof}[Proof of Proposition \ref{prop:abstract crit}] 

We want to show 
\[ \lim_{r\to \infty} \frac{\sum_{i=1}^r F_i(x) }{ \sum_{i=1}^r \int H_i}=0.\]
Choose $N$ so that $m_{N^2}\leq r < m_{(N+1)^2}$. Again using \eqref{eqn:mk bounds},
\[ \left| \frac{\sum_{i=1}^r F_i(x)}{\sum_{i=1}^r \int H_i} \right| \leq \left| \frac{\sum_{i=1}^{m_{N^2}} F_i(x)}{N^2} \right| + \left| \frac{\sum_{i=m_{N^2}+1}^r F_i(x)}{N^2} \right|.\]
Therefore, using Lemma \ref{lem:conv on sq}, it is sufficient to prove that for almost every $x$,
\[ \lim_{N\to \infty} \max_{m_{N^2}< r < m_{(N+1)^2}} \frac{\sum_{i=m_{N^2}+1}^r F_i(x)}{N^2} = 0.  \]
Recalling the definition of $F_i$, we need to consider
\[ \frac{\sum_{i=m_{N^2}+1}^r H_i(x) - \Vert H_i \Vert_1 }{N^2}.\]

The proof follows an argument similar to Lemma \ref{lem:conv on sq}. For any $L<m_{(N+1)^2}$, using the bounds on $\Vert F_i \Vert_*$ and equation \eqref{eqn:mk bounds}, one has $\int(\sum_{i=m_{N^2}+1}^L F_i(x))^2dx <\tilde C N$. Chebyshev's inequality implies
\[ m\Big( \big\{ x: \Big|\sum_{i=m_{N^2}+1}^L F_i(x) \Big| > \delta N^2\big\}\Big)  <  \frac{\tilde C N}{\delta^2 N^4}. \]
This is summable, so applying Borel-Cantelli as before, the set of $x$ which do not have the desired convergence property has measure zero.

\end{proof}

\begin{rem}\label{rem:h3 prime}
Note that condition (H3) can be replaced by the following slightly weaker condition, which is all that is used in the proof of Proposition \ref{prop:abstract crit}:

(H$3'$): There exists some constant $C_2$ such that for all $N$,
\[ \left|\sum_{i=1}^{N-1} \sum_{j=i+1}^N |\int H_i H_j - \int H_i \int H_j\right| < C_2 \sum_{i=1}^{N-1} \|H_i\|_1.\]
When we use Proposition \ref{prop:abstract crit} in the proof of Proposition \ref{prop:reasonable i}, we will use (H$3'$) in place of (H3).
\end{rem}

%

\subsection{Controlling the omitted terms}\label{sec:limsup}

We now turn our attention to $\sum_{j \notin \cup [n_i,2n_i)}\chi_{B(\frac 1 2,r_i)}(T^ix)$, that is, the terms omitted in our consideration of $g_i$.

Recall that $\beta_i(x)=\sum_{j=2n_i}^{n_{i+1}-1}\chi_{B(\frac 1 2,r_j)}(T^jx)$, where we understand that $\beta_i\equiv0$ if $n_{i+1}=2n_i$. Notice that it is possible that $\beta_i\equiv0$ for many $i$.  As we will see below, the assumptions on $T$ in Theorem \ref{thm:explicit} will imply that for most $i$, $\beta_i$ contributes little to the sum we are considering. This will enable us to prove the main result of this section:

\begin{prop}\label{prop:limsup}
Under the assumptions of Theorem \ref{thm:explicit}, for any $\epsilon>0$ there exists $\xi_0>0$ so that if $\xi_0>\xi>0$ then for almost every $x$ we have $\sum_{i=1}^{N-1}\beta_i(x)<\epsilon\sum_{i=1}^{n_N-1}2r_i$ for all sufficiently large $N$. ($N$ is allowed to depend on $x$.)
\end{prop}

The first step is to prove a version of Lemma \ref{lem:hit bound} for our current setting, a bound on $\Vert \beta_i\Vert_\infty$. We accomplish that in the following Lemma and Corollary. Recall that $a_{i+1}=\frac{n_{i+1}}{n_i}$.

\begin{lem}\label{lem:large a}
If $\|\beta_k\|_\infty>\max\{\frac{10}{\xi}, \frac{10}{\xi}(\sum_{i=1}^k \| g_i \|_1)^{\frac{2}{3}}\}$, then there exists a constant $C>0$ such that $a_{k+1}>Ck^{\frac{2}{3}}$. $C$ depends only on $r_1$, the first term in our Khinchin sequence.
\end{lem}

\begin{proof}
Because $\{T^ix\}_{i=0}^{n_{k+1}-1}$ is $e_{T}(n_{k+1})>\frac {\xi}{n_{k+1}}=\frac {\xi}{a_{i+1}n_i}$-separated, 
$\|\beta_k\|_\infty \leq \frac{2 a_{k+1}}{\xi}n_kr_{2n_k}+1$, using the argument of Lemma \ref{lem:hit bound}. As for almost every $x$, $\beta_k(x) >\frac{10}{\xi}$ and $\xi$ is small, we can reformulate this bound as $\|\beta_k\|_\infty \leq \frac{4 a_{k+1}}{\xi}n_kr_{2n_k}$.  Note, in addition, that by our Khinchin condition, $\| g_i\|_1 \geq 2n_kr_{2n_k}$ for all $i\leq k$ and so $(\sum_{i=1}^k \| g_i \|_1 )^{\frac{2}{3}}\geq (k2n_kr_{2n_k})^{\frac{2}{3}}.$ 

Using that $\|\beta_k\|_\infty \leq \frac{4 a_{k+1}}{\xi}n_kr_{2n_k}$, and by assumption, $\|\beta_k\|_\infty > \frac{10}{\xi}(\sum_{i=1}^k \|g_i\|_1)^{\frac{2}{3}} \geq \frac{10}{\xi} (k2n_kr_{2n_k})^{\frac{2}{3}}$, 
\[ \frac{4}{\xi} a_{k+1} n_kr_{2n_k} > \frac{10}{\xi} (k2n_kr_{2n_k})^{\frac{2}{3}}\]
and so
\[ a_{k+1}(n_kr_{2n_k})^{\frac{1}{3}} > k^{\frac{2}{3}}.\]
Using this inequality and the fact that the Khinchin condition implies that $n_kr_{2n_k}\leq \frac{1}{2}r_1$, 
\[ a_{k+1} >  \left( \frac{2}{r_1} \right)^{\frac{1}{3}}k^{\frac{2}{3}},\]
proving the Lemma.
\end{proof}

\begin{cor}\label{cor:beta infty bound}
For almost every $x$, for all but finitely many $k$, $\beta_{k}(x)< \max\{\frac {10}{\xi},\frac{10}{\xi}(\sum_{i=1}^k \| g_i \|_1)^{\frac{2}{3}}\}$.
\end{cor}

\begin{proof} 
First, by our assumptions that $a_k \leq k^{\frac{4}{3}}$ for all but finitely many $k$, we have that for such $k$, $\|\beta_k(x)\|_1\leq \sum_{j=1}^{\frac{4}{3} \log_2(k)}2^jn_kr_{2^jn_k}$. By the Khinchin condition on $\{r_i\}$ this is $O(\log(k)n_kr_{2n_k})$. 
Recall that in the proof of Lemma \ref{lem:large a}, we saw that $(\sum_{i=1}^k\| g_i \|)^{\frac{2}{3}}\geq (kn_kr_{2n_k})^{\frac{2}{3}}$.

We claim that for such $k$,
\[\lambda\left(\left\{x:\beta_k(x)>\max\{\frac {10}\xi,\frac{10}\xi(\sum_{i=1}^k\| g_i \|_1)^{\frac{2}{3}}\}\right\} \right) = O\left(\frac {\log(k)} {k^{\frac 2 3 }}\right).\]
Indeed, using the estimates noted in the previous paragraph,
\begin{multline*}
	\lambda\left(\{x:\beta(x)>\frac{10}\xi(\sum_{i=1}^k\| g_i \|)^{\frac{2}{3}}\}\right) \leq \frac{\|\beta_k\|_1}{\frac{10}{\xi}(kn_kr_{2n_k})^{\frac 2 3 }}\leq \\
O\left(\frac{\log(k)(n_kr_{2n_k})^{\frac{1}{3}}}{k^{\frac{2}{3}}}\right)=O\left(\frac{\log(k)}{k^{\frac{2}{3}}}\right).
\end{multline*}
The last step uses that $\{r_i\}$ is a Khinchine sequence and so $n_kr_{2n_k}<\frac{1}{2}r_1$.  

By Lemma \ref{lem:large a}, for large $k$, the set of $x$ for which $\beta_k(x)$ has such large values has positive measure only if $a_k\geq k^{\frac 1 2}$. But assumption (3) of Theorem 2.3 is that $\sum_{k:a_k>k^{\frac{1}{2}}}\frac {\log(k)} {k^{\frac{2}{3}}}<\infty$. This implies the corollary via the Borel-Cantelli Lemma.
\end{proof}

The next step is the following probabilistic result, which is an analogue of Proposition \ref{prop:abstract crit}:

\begin{lem}\label{lem:abstract bound} 
Let $K_i:[0,1) \to \mathbb{R}_{\geq 0}$ be a sequence of functions and $C_N$ an increasing, unbounded, positive sequence of real numbers $C_N = o(N^3)$ satisfying the following:

\begin{itemize}
	\item[(K1)] There exists some $M>0$ such that $\sum_{i=1}^N \Vert K_i \Vert_1<C_N$ for all $N>M$
	\item[(K2)] There exists $D_0>0$ such that $\max_{i<N, x} \{K_i(x)\} < D_0C_N^{\frac {2}{3}}$ 
	\item[(K3)] There exists $D_1>0$ such that $\sum_{1\leq i<j<N} \left(\int K_i(x) K_j(x) - \int K_i \int K_j\right) < D_1C_N^{\frac{5}{3}}$
\end{itemize}
Then for almost every $x$ 
\[\limsup_{N\to \infty} \frac{\sum_{i=1}^N K_i(x)-\sum_{i=1}^N \|K_i\|_1}{C_N}=0.\]
\end{lem}

\begin{proof}
Let $R_i=K_i-\int K_i$. Note that $\Vert R_i \Vert _1 \leq 2 \Vert K_i \Vert_1$ so $\sum_{i=1}^N \Vert R_i\Vert_1 < 2C_N$ for $N>M$, that (K2) implies $\Vert R_i \Vert_\infty < D_0 C_N^{\frac{2}{3}}$, and that (K3) implies $\sum_{1\leq i<j\leq N} \int R_i R_j < D_1 C_N^{\frac{5}{3}}.$

We begin by computing the variance of $\sum_{i=1}^{N-1} R_i$. Because $\|R_i\|_2^2\leq \|R_i\|_1\cdot \|R_i\|_\infty$ we obtain $\sum_{i=1}^{N-1} \Vert R_i \Vert_2^2 \leq 2D_0 C_N^{\frac{5}{3}}$ using (K1) and (K2). Using (K3), $ 2\sum_{1\leq i < j < N} \int R_i R_j  \leq 2 D_1 C_N^{\frac{5}{3}}.$ Therefore $\int (\sum_{i=1}^{N-1} R_i(x))^2 dx\leq  2(D_0 + D_1) C_N^{\frac{5}{3}}.$

Fix any $\delta>0$. By Chebyshev's inequality,
\begin{equation} \label{eq:cheby beta}
	m\left(\{x: \sum_{i=1}^{N-1} R_i(x)>\delta C_N\}\right)\leq \frac{2(D_0+D_1)}{\delta^2C_N^{1/3}}.
\end{equation}

Recall that the $C_N$ are increasing and without bound. For any $r$, let ${k_r=\min\{N:C_N>r\}}$. Note that $C_{k_r}>r$ by definition. In addition, since $(N+1)^4 - N^4$ is $O(N^3)$ and $C_N = o(N^3)$, for sufficiently large $N$, $C_{k_{N^4}}<(N+1)^4$.

Consider $\{ x : \sum_{i=1}^{k_{N^4}-1} R_i(x) > \delta C_{k_{N^4}} \}.$ By \eqref{eq:cheby beta}
\[ m\left(\{ x : \sum_{i=1}^{k_{N^4}-1} R_i(x) > \delta C_{k_{N^4}} \}\right) < \frac{2(D_0+D_1)}{\delta^2 C^{\frac{1}{3}}_{k_{N^4}} } < \frac{2(D_0+D_1)}{\delta^2 N^{\frac{4}{3}}}\]
since $C_{k_{N^4}} > N^4$. These measures form a summable series, so by the Borel-Cantelli Lemma, for almost all $x$, $\sum_{i=1}^{k_{N^4}-1} R_i(x) > \delta C_{k_{N^4}} >\delta N^4$ for only finitely many $N$. Therefore, for almost all $x$,
\[ \limsup_{N\to \infty} \frac{|\sum_{i=1}^{k_{N^{4}}}R_i(x)|}{N^{4}}\leq \delta. \]
This establishes the desired convergence along the sequence $\{k_{N^4}-1\}$. 

We now need to consider the omitted terms. Consider
\begin{equation}\label{eqn:max}
	\max_{k_{N^4} \leq L < k_{(N+1)^4}} \sum_{i=k_{N^4}}^{L} R_i(x) \leq \sum_{i=k_{N^4}}^{k_{(N+1)^4}} K_i(x).
\end{equation}
(The inequality holds as $R_i+\Vert K_i \Vert_1 = K_i \geq 0$.) Again, we bound the variance, using (K1), (K2), and (K3). (K1) and (K2) imply $\sum_{i=k_{N^4}}^{k_{(N+1)^4}} \Vert K_i \Vert_2^2 \leq D_0 C_{k_{(N+1)^4}}^{\frac{5}{3}}$. With (K3), we get an upper bound on the variance of \eqref{eqn:max} of 
\[(D_0+2D_1) C_{k_{(N+1)^4}}^{\frac{5}{3}} < (D_0+2D_1) ((N+2)^4)^{\frac{5}{3}} = (D_0+2D_1) (N+2)^{\frac{20}{3}}\]
for all $N$ sufficiently large. At the last step we have used the fact that for sufficiently large $N$, $C_{k_{(N+1)^4}}<(N+2)^4$, which relies on the $C_N=o(N^3)$ assumption.

By Chebyshev's inequality 
\begin{align}
	m\left(\left\{x:\underset{L<k_{(N+1)^4}}{\max} |\sum_{i=k_{N^4}}^L R_i(x)|>\delta C_{k_{N^4}}\right\}\right)&\leq \frac{(D_0+2D_1)(N+2)^{20/3}}{(\delta C_{k_{N^4}})^2} \nonumber \\
				& \leq \frac{(D_0+2D_1) (N+2)^{20/3}}{\delta^2N^8} \nonumber \\
				& \leq 2 (D_0+2D_1) N^{-4/3} \delta^{-2} \nonumber
\end{align}
for sufficiently large $N$.

Therefore, by the Borel-Cantelli Lemma almost every $x$ has ${|\sum_{i=k_{N^4}}^L R_i(x)|>\delta N^{4}}$ with $L<k_{(N+1)^4}$ only finitely many times. Therefore, for any integer $N$, writing $N = k_{m^4}+L$ with $m$ the largest integer such that $k_{m^4}\leq N$, we get
\[\limsup_{N\to \infty}\frac{|\sum_{i=1}^{N}R_i(x)|}{C_N}\leq 2\delta\]
for almost every $x$. Letting $\delta\to 0$ finishes the proof.
\end{proof}

\begin{lem}\label{lem:translation} 
Under the assumptions of Theorem \ref{thm:explicit}, for any $\epsilon>0$ there exists $\xi_0>0$ so that if $\xi_0>\xi>0$ then 
	\[ \limsup_{N \to \infty} \frac{\sum_{i=1}^{N-1} \Vert \beta_i \Vert_1}{\sum_{i=1}^{n_N-1} 2r_i}<\epsilon\]
where the $\beta_i(x)=\beta_i(x,\xi)$ are calculated using $n_i(\xi)$ and $a_i(\xi)$.
\end{lem}

\begin{proof}
Let $\epsilon>0$ be given. Fix some $\xi_1$ for which the assumptions of Theorem \ref{thm:explicit} hold. Let $n_i$ and $\xi_i$ denote $n_i(\xi_1)$ and $a_i(\xi_1)$.

Notice that $e_T(2n_{i+1}) = e_T(2 a_{i+1} n_i) > \frac{\xi_1}{2 a_{i+1} n_i}$. Therefore, if $a_{i+1}<A$, then for any $j \leq a_{i+1}$, 
\[ e_T(2 j n_i) \geq e_T(2 a_{i+1} n_i) > \frac{\xi_0}{ j n_i} \]
where $\xi_0=\frac{\xi_1}{2A}$. For any choice of $\xi<\xi_0$, let  $n_i'=n_i(\xi)$ and $a_i'=a_i(\xi)$. By the choice of $\xi_0$, whenever $n_i'$ belongs to some $[n_l, n_{l+1})$ where $a_{l+1}<A$, we have that $a_{i+1}'=2$.

We will choose $A$ below. Once we have done so, fix $\xi$ less than $\xi_0=\frac{\xi_1}{2 A}$ and let $u_k=\max\{j:n'_j<n_k\}$. Then
\[\sum_{j=1}^{u_k}\sum_{i=n'_j}^{2n'_j}r_i\geq \sum_{i:i \in [n_l,n_{l+1}) \text{ and }a_{l+1}<A}^{n_k-1}r_i.\] 
Indeed, by the end of the previous paragraph, for any $i \in [n_j,n_{j+1})$ with $a_{j+1}<A$, $i \in [n'_j,2n'_j)$ for some $j$.

By condition (2) of Theorem \ref{thm:explicit}, if $A$ is sufficiently large, the upper density of $\{j: \exists i \in [2^j,2^{j+1}] \text{ with }i \notin \cup_{\ell=1}^{\infty}[n'_\ell,2n'_\ell)\}$ is less than $\epsilon$.  By the proof of Lemma \ref{lem:L1 bound}, whenever $i>j$ we have $\sum_{k=2^i}^{2^{i+1}-1} r_k\leq  \sum_{k=2^j}^{2^{j+1}-1}r_k$.

\begin{slem} 
If $\{s_i\}$ is a sequence of positive real numbers so that $s_i \leq s_j$ for all $i>j$ and $\sum s_i=\infty$, and if $\epsilon>0$, then for any $U \subset \mathbb{N}$ with upper density less than $\epsilon$ we have 
\[\limsup_{N \to \infty}\,\frac{\sum_{i \in U_N}s_i}{\sum_{i=1}^{N}s_i}\leq 2\epsilon\]
where $U_N=U\cap[1,N]$.
\end{slem}

\begin{proof}
There exists $M$ so that $|U_N|<\epsilon N$ for all $N>M$. Given such an $M$, write $U\cap (M, \infty)$ as $i_1<i_2<\cdots$ and inductively assign to each $i_k\in U\cap (M,\infty)$ the set of indices $G_{i_k} = [(k-1)\lceil \frac{1}{2\epsilon}\rceil+1, k\lceil \frac{1}{2\epsilon}\rceil)$. Note that by our choice of $M$, each $G_i\subseteq [1,i)$. Then $\sum_{l\in G_i} s_l \geq \frac{1}{2\epsilon} s_i$ by our assumption on $(s_i)$. Therefore
\begin{equation}\label{eqn:s_i bound}
	\sum_{l=1}^N s_l \geq \sum_{i\in U \cap (M,N]} \sum_{l\in G_i} s_l \geq \sum_{i\in U\cap[M,N]} \frac{1}{2\epsilon} s_i.
\end{equation}
Since $\sum s_i=\infty$, Clearly $\underset{N \to \infty}{\lim}\,\frac{\sum_{i\in U_M}s_i}{\sum_{i=1}^Ns_i}=0$. Therefore, \eqref{eqn:s_i bound} proves the desired result.
\end{proof}

For the $\beta_i'(x):=\beta_i(x,\xi)$ 
we have the bound 
\[ \sum_{i=1}^{N-1} \Vert \beta'_i \Vert_1 \leq \sum_{j\in U_{N-1}} \sum_{i=2^j}^{2^{j+1}-1}2r_i\]
where $U= \{j: \exists i \in [2^j,2^{j+1}] \text{ with }i \notin \cup_{\ell=1}^{\infty}[n'_\ell,2n'_\ell)\}$. Applying the Sublemma with $s_i=\sum_{k=2^i}^{2^{i+1}-1} 2r_k$ completes the proof of the lemma.
\end{proof}

We are now ready to prove Proposition \ref{prop:limsup}.

\begin{proof}[Proof of Proposition \ref{prop:limsup}] 

We prove the Proposition using Lemma \ref{lem:abstract bound}, with $K_i=\beta_i$ and $C_N=\sum_{i=1}^{n_N-1}2r_i$. To apply it, let $\tilde{\beta}_k(x)=\min\{\beta_k(x),C_k^{\frac 2 3}\}.$ By Corollary \ref{cor:beta infty bound} we have that for almost every $x$, $\beta_k(x)=\tilde{\beta}_k(x)$ for all but finitely many $k$ and so it is enough to prove Proposition \ref{prop:limsup} with $\beta_k$ replaced by $\tilde{\beta}_k$.
Lemma \ref{lem:abstract bound} controls the difference between $\sum_{i=1}^N\tilde{\beta}_i(x)$ and $\sum_{i=1}^N\|\tilde{\beta_i}\|_1$ relative to $C_N$. By Lemma \ref{lem:translation} (and the fact that $\tilde{\beta}_k(x)\leq\beta_k(x)$),  for any $\epsilon>0$ there exists $\xi_0$ so that for all $\xi_0>\xi>0$ we have the following control on $\sum_{i=1}^N\|\tilde{\beta_i}\|_1$:
\[\limsup_{N \to \infty}\frac{\sum_{i=1}^{N-1} \Vert \tilde{\beta_i} \Vert_1}{\sum_{i=1}^{n_N-1}2r_i}<\epsilon,\] 
and so by Lemma \ref{lem:abstract bound}, $\underset{N \to \infty}{\limsup}\, \frac{\sum_{i=1}^{N-1}\tilde{\beta}_i(x)}{\sum_{i=1}^{n_N-1}2r_i}<\epsilon$.
Therefore, to prove Proposition \ref{prop:limsup}, it suffices to check the conditions of Lemma \ref{lem:abstract bound} with $K_i=\tilde{\beta}_i$ and $C_N=\sum_{i=1}^{n_N-1}2r_i$. 

That $C_N$ is an increasing, unbounded, positive sequence is clear from its definition. The assumption that $a_i\leq i^{\frac{4}{3}}$ for all but finitely many $i$ implies that $n_N = O((N!)^\frac{4}{3})$. The Khinchin condition implies that $r_i\leq \frac{r_1}{i}$ for all $i$, so $C_N= O(\log n_N) = O(\log(N!)) = O(\sum_{i=1}^N \log i) = o(\sum_{i=1}^N i) = o(N^2)$ and so $C_N$ is certainly $o(N^3)$.

Condition (K1) follows from Lemma \ref{lem:translation}.

Condition (K2) follows immediately from the definition of $\tilde{\beta}_i$.

The proof of condition (K3) follows the argument of Proposition \ref{prop:correlation}; we sketch the argument here, using similar notation. Let $u=\max(\{j\} \cup \{i: n_i<\frac{1}{r_{n_{j+1}}} \} )$ and $v=d(2-\log_2(\xi))+\hat q$. (Note the slight difference in the definition of $u$.)

\

\noindent \emph{Step 1:} We bound the sum over indices $i$ satisfying $j<i<u$. Following the argument of Proposition \ref{prop:correlation} and replacing $n_u<\frac{1}{r_{n_j}}$ with $n_u<\frac{1}{r_{n_{j+1}}}$, we get $\sum_{j<i<u} \int \tilde{\beta_i}\tilde{\beta_j} \leq \left( 1+ \frac{8}{\xi}\right) \Vert\beta_j\Vert_1$. 

We bound $\sum_{j<i<u} \int \tilde{\beta_i} \int \tilde{\beta_j}$ as follows. Note that $\beta_{j+1}, \ldots \beta_{u-1}$ are sums whose terms have indices between $2n_{j+1}$ and $n_u$. This range of indices can be partitioned into $\log_2\left( \frac{n_u}{2n_{j+1}}\right)$ intervals between successive powers of 2. Then, using Lemma \ref{lem:L1 bound} to bound the contribution of each portion of this sum between successive powers of 2 by $\Vert g_{j+1} \Vert_1$, we get
\[\sum_{j<i<u}\Vert \tilde{\beta}_i\Vert_1 \Vert \tilde{\beta}_j\Vert_1\leq  \sum_{j<i<u}\Vert \beta_i\Vert_1 \Vert \beta_j\Vert_1 \leq \Vert \beta_j\Vert_1 \log_2\left( \frac{n_u}{2n_{j+1}} \right) \Vert g_{j+1}\Vert_1. \]
Note that $\Vert g_{j+1}\Vert_1 \leq 2n_{j+1}r_{n_{j+1}}$ and that, using the definition of $u$, $\frac{n_u}{n_{j+1}} < \frac{1}{n_{j+1}r_{n_{j+1}}}$. Therefore,
\[ \sum_{j<i<u}\Vert \tilde\beta_i\Vert_1 \Vert \tilde\beta_j\Vert_1 \leq \Vert \beta_j\Vert_1 \log_2\left( \frac{1}{2n_{j+1}r_{n_{j+1}}} \right)  (2n_{j+1}r_{n_{j+1}})\leq \Vert \beta_j \Vert_1.\]

Altogether, summing over $j$ as well,
\[ \sum_{1\leq j < N} \sum_{j<i<u}^{N-1} \left| \int \tilde{\beta}_i\tilde{\beta}_j - \int\tilde{\beta}_i\int\tilde{\beta}_j \right| \leq \sum_{1\leq j <N} C \Vert \beta_j\Vert_1 \leq C C_N\]
for some constant $C>0$, which is a sufficient bound for this part of the double sum.

\

\noindent \emph{Step 2:} For some constant $K'$ independent of $j$ we can bound
\[\sum_{j+1\leq i \leq j+4v}^{N-1} \left|\int \tilde{\beta}_i \tilde{\beta}_j - \int \tilde{\beta}_i\int\tilde{\beta}_j\right|  \leq \sum_{j+1 \leq i \leq j+4v}^{N-1} 2\Vert \beta_i\Vert_\infty \Vert \beta_j\Vert_1  \leq K' C_N^{\frac{2}{3}} \Vert \beta_j\Vert_1 \]
using (K2) to bound $\Vert\beta_i \Vert_\infty$. Summing over all $1\leq j< N$ gives a bound of $K'C_N^{\frac{5}{3}}$, as desired.

\

\noindent \emph{Step 3:} For the terms with indices $u+4v < i < N$, let $b=\frac{3u+i}{4}$. We approximate $\tilde{\beta}_j$ by a function $f_{i,j}$, constant on the elements of the partition by $S_{i,j}$. We find that
\begin{equation}\label{eqn:exp bound}
	\Vert f_{i,j} - \tilde{\beta}_j \Vert_1 \leq 2a_{j+1}n_j \frac{1}{n_{b-v}} \leq \tilde K 2^{-\frac{i-u}{4}} \Vert g_j \Vert_1 \  \ \mbox{ and } \ \ \Vert f_{i,j}\Vert_\infty \leq \Vert \tilde{\beta}_j \Vert_\infty.
\end{equation}
We use here that as $\Vert g_j \Vert_1 \geq 2n_j r_{2n_j} \geq  n_{j+1}r_{n_{j+1}}$ and $n_{u+1} \geq \frac{1}{r_{n_{j+1}}}$ (by our definition of $u$), $\Vert g_j\Vert_1 \geq \frac{n_{j+1}}{n_{u+1}}\geq\frac{a_{j+1}n_j}{n_{b-v}}$.

Applying Theorem \ref{thm:quant bc} exactly as in Proposition \ref{prop:correlation} gives that for any $n_b$-block interval,
\[ \Big| \sum_{j=1}^{n_{\frac{i+u}{2}}} \chi_J(T^jx) - \chi_J(T^jx') \Big| < n_{\frac{i+u}{2}} C_1 e^{-C_2\frac{i-u}{4}} |J| \]
with $C_1, C_2$ independent of $j$. Hence, as before,
\[ \Big| \big| \{ 0 < k \leq n_{\frac{i+u}{2}} : T^kx\in J \} \big|  - n_{\frac{i+u}{2}} |J| \Big| <  n_{\frac{i+u}{2}} C_1 e^{-C_2\frac{i-u}{4}}|J|.\]
Writing $f_{i,j} = \sum_{l=1}^R \alpha_l \chi_{J_l}(x)$, we consider
\[ \left| \int \tilde{\beta}_i f_{i,j} - \int \tilde{\beta}_i \int f_{i,j} \right| \leq \sum_{l=1}^R \alpha_l \left| \int \tilde{\beta}_i \chi_{J_l} - |J_l| \int \tilde{\beta}_i \right| \]
as in Proposition \ref{prop:correlation}. Again applying Lemma \ref{lem:correl by invar} with $h = \tilde\beta_i$, $n=n_{\frac{i+u}{2}}$ and $\delta' = C_1 e^{-C_2\frac{i-u}4} |J_l|$, we obtain
\begin{align}
	\left|  \int \tilde{\beta}_i f_{i,j} - \int \tilde{\beta}_i \int f_{i,j} \right| &\leq 
	\sum_{l=1}^R \alpha_l \left( C_1 e^{-C_2\frac{i-u}4} |J_l| \cdot \|\tilde{\beta}_i\|_1 + \frac{1}{n_{\frac{i+u}{2}}} \sum_{k=1}^{n_{\frac{i+u}{2}}} \int_{J_l} |\tilde{\beta}_i-\tilde{\beta}_i\circ T^k| dx \right) \nonumber \\
			& = \|f_{i,j}\|_1 C_1 e^{-C_2\frac{i-u}{4}}\Vert \beta_i \Vert_1 + \frac{1}{n_{\frac{i+u}{2}}} \sum_{k=1}^{n_{\frac{i+u}{2}}} \sum_{l=1}^R \alpha_l \int_{J_l} |\tilde{\beta}_i-\tilde{\beta}_i\circ T^k| dx \nonumber \\
			& \leq \|f_{i,j}\|_1 C_1 e^{-C_2\frac{i-u}{4}}\Vert \beta_i \Vert_1 + \| f_{i,j}\|_\infty \frac{1}{n_{\frac{i+u}{2}}} \sum_{k=1}^{n_{\frac{i+u}{2}}} \| \tilde{\beta}_i-\tilde{\beta}_i\circ T^k \|_1 \nonumber \\
			&\leq \|f_{i,j}\|_1 C_1 e^{-C_2\frac{i-u}{4}} \|\beta_i\|_1 + \| \tilde{\beta}_j \|_\infty \tilde c_{i,j} \label{eqn:beta int bound}
\end{align}
using the second statement of \eqref{eqn:exp bound}, where
\[ \tilde c_{i,j} = \max \{ \Vert \tilde{\beta}_i - \tilde{\beta}_i \circ T^s \Vert_1 : 0 \leq k \leq n_{\frac{i+j}{2}}\}. \]
Using Lemma \ref{lem:L1 bound}, we can bound $\|\beta_j\|_1$ and $\Vert \beta_i \Vert_1$ (and thus $\|\tilde{\beta}_j\|_1$ and $\Vert \tilde{\beta}_i \Vert_1$)  by $\log(a_{j+1})\|g_j\|_1$ and $\log (a_{i+1})\Vert g_j \Vert_1$, respectively. Because 
$$|\tilde{\beta}_i(x)-\tilde{\beta}_i(y)\|\leq |\beta_i(x)-\beta_i(y)|$$ for all $x,y$,  Lemma \ref{lem:beta almost invar}, $\sum_{i>u+4v} \tilde c_{i,j} \leq D \Vert g_j \Vert_1$ for some $D$ independent of $j$. Then summing \eqref{eqn:beta int bound} over the relevant indices and using (K2) to bound $\|\tilde{\beta}_j\|_\infty$ gives
\begin{multline*}
	\sum_{i>u+4v}^{N-1} \left| \int \tilde{\beta}_i f_{i,j} -  \int \tilde{\beta}_i \int f_{i,j}\right| \leq \\ 
	\left( \sum_{i>u+4v}^{N-1}C'e^{-C_2\frac{i-u}{4}} (\log(a_{i+1})\log(a_{j+1})) \right) \Vert g_j \Vert_1^2 + D' C_N^{\frac{2}{3}} \Vert g_j \Vert_1.
\end{multline*}
$C'$, and $C', C_2$, and $D'$ are independent of $j$.

A computation using $a_i\leq i^{4/3}$ for all but finitely many $i$ shows that $$\sum_{i>u+4v}^{N-1} C'e^{-C_2\frac{i-u}{4}} (\log(a_{i+1})\log(a_{j+1}))\|g_j\|_1^2 \leq L' C_N^{\frac 1 3}$$ for some $L'>0$ independent of $j$. Indeed, since there exists $C''$ so that $C'e^{-C_2\frac{i-u}{4}} (\log(i))<1$ for all $i>C''\log(\log(j))+j$,
$\sum_{i>u+4v}^{N-1} C'e^{-C_2\frac{i-u}{4}} (\log(a_{i+1})\leq C' \log(j)\log(\log(j))$. 
Considering separately the cases that $\|g_j\|_1< \frac 1 {\log(j)^2}$ and $\|g_j\|_1\geq \frac 1 {\log(j)^2}$ (which implies that $C_j\geq \frac{j}{\log(j)^2}$) we have the claim.

 We have
\begin{equation}\label{eqn:main bound}
	\sum_{i>u+4v}^{N-1} \left| \int \tilde{\beta}_i f_{i,j} -  \int \tilde{\beta}_i \int f_{i,j}\right| \leq L'C_N^{\frac 1 3} + D'C_N^{\frac{2}{3}} \Vert g_j \Vert_1. 
\end{equation}
Summing this over all $1\leq j<N$, we get a bound of $LC_N^{\frac{5}{3}}$ for some $L>0$ independent of $j$, as desired.

From this point, the proof follows the proof of Proposition \ref{prop:correlation}, combining estimates \eqref{eqn:exp bound} and \eqref{eqn:main bound} with the bounds from Steps 1 and 2 exactly as before. This completes (K3).
\end{proof}

We are now ready to complete the proof of Theorems \ref{khinchin seq} and \ref{thm:explicit}.

\begin{proof}[Proof of Theorems \ref{khinchin seq} and \ref{thm:explicit}]

We want to show that, under our conditions on $T$ and for almost every $x$,
\begin{equation}\label{eqn:goal}
	\lim_{M\to \infty} \frac{\sum_{j=1}^M \chi_{B(\frac{1}{2}, r_j)} (T^jx)}{\sum_{j=1}^M 2r_j}=1.
\end{equation}

Applying Proposition \ref{prop:abstract crit} with $H_i = g_i$ we have for almost all $x$ that
\begin{equation}\label{eqn:g eqn} 
	\lim_{N \to \infty} \frac{\sum_{i=1}^N g_i(x)}{\sum_{i=1}^N  \Vert g_i \Vert_1}=1.
\end{equation}
Then, when $M=2n_N-1$, we can decompose the numerator in equation \eqref{eqn:goal} as follows:
\begin{equation}\label{eqn:decomp}
	\frac{\sum_{j=1}^{2n_N-1} \chi_{B(\frac{1}{2}, r_j)} (T^jx)}{\sum_{j=1}^{2n_N-1} 2r_j} = \frac{\sum_{i=1}^{N} g_i(x) + \sum_{i=1}^{N-1} \beta_i(x)}{\sum_{j=1}^{2n_N-1} 2r_j}.
\end{equation}
Proposition \ref{prop:limsup} tells us that for almost every $x$,
\[ \limsup_{N\to \infty} \frac{\sum_{i=1}^{N-1} \beta_i(x)}{\sum_{j=1}^{2n_N-1} 2r_j} \leq \epsilon\]
so the contribution of the $\beta_i$ terms to equation \eqref{eqn:decomp} is negligible for large $N$, and they can be ignored:
\begin{equation}\label{eqn:new goal}
	\left|\lim_{N\to\infty} \frac{\sum_{j=1}^{2n_N-1} \chi_{B(\frac{1}{2}, r_j)} (T^jx)}{\sum_{j=1}^{2n_N-1} 2r_j} - \lim_{N\to\infty} \frac{\sum_{i=1}^{N} g_i(x) }{\sum_{j=1}^{2n_N-1} 2r_j}\right|<\epsilon.
\end{equation}

Note that for all $N$, 
\begin{equation}\label{eqn:gi rate}
	\frac{\sum_{i=1}^N \Vert g_i \Vert_1}{\sum_{j=1}^{2n_N-1} 2r_j} \leq 1.
\end{equation}
Combining equations \eqref{eqn:g eqn}, \eqref{eqn:gi rate} and \eqref{eqn:new goal} gives
\[ \limsup_{N\to\infty} \frac{\sum_{j=1}^{2n_N-1}\chi_{B(\frac{1}{2}, r_j)} (T^jx)}{\sum_{j=1}^{2n_N-1} 2r_j} \leq 1.\]

On the other hand, by Lemma \ref{lem:translation} using our second condition on $T$, for any $\delta>0$ there exists some $\xi>0$ so that, with $g_i$ defined using this $\xi$, we have
\begin{equation}\label{eqn:gi liminf}
	\liminf_{N\to\infty} \frac{\sum_{i=1}^N \Vert g_i \Vert_1}{\sum_{j=1}^{2n_{N}-1}2r_j} \geq 1-\delta.
\end{equation}
Using equations \eqref{eqn:gi liminf}, \eqref{eqn:gi rate} and \eqref{eqn:new goal} gives
\[ \liminf_{N\to\infty} \frac{\sum_{j=1}^{2n_N-1}\chi_{B(\frac{1}{2}, r_j)} (T^jx)}{\sum_{j=1}^{2n_N-1} 2r_j} \geq 1-\delta.\]

Letting $\delta \to 0$, we have now established equation \eqref{eqn:goal} along the sequence of times $\{2n_N-1\}$. This is sufficient. By (K2) the contribution of any terms with index in $[2n_N, n_{N+1})$ will be negligible for large $N$ and all $x$. The bound on $g_i(x)$ in Lemma \ref{lem:hit bound} tells us that for large $N$, the contribution of terms with index in $[n_{N+1}, 2n_{N+1}-1)$ will also be negligible. This completes the proof.

\end{proof}

%

\section{Proof of Theorem \ref{constant type}}

We now turn to the proof of Theorem \ref{constant type}. Recall that in this theorem the assumption that $\alpha$ is badly approximable allows us to omit the Khinchin condition and consider a wider class of radius sequences $\{r_i\}$.  As in Section \ref{sec:Thm1}, we will state and prove a generalization of Theorem \ref{constant type} to the case of interval exchange transformations. Using the notation developed in Section \ref{sec:setup}, this generalization is:

\begin{thm}\label{thm:iet bad approx}
Let $T$ be an IET satisfying the Keane condition so that there exists $\sigma>0$ with $e_T(n)>\frac{\sigma}n$ for all $n$.  Then for any decreasing sequence $\{r_i\}$ with divergent sum we have:
\[\lim_{N\to \infty}\frac{\sum_{j=1}^N \chi_{B(\frac{1}{2},r_j)}(T^j x)}{\sum_{j=1}^N 2r_j}=1\]
for almost every $x$.
\end{thm}

Let $\sigma$ be such that $e_T(n) > \frac{\sigma}n$ for all $n$.  If $T$ satisfies this for some $\sigma$, we say it is \emph{of constant type}.  Without loss of generality, we may assume $\sigma<1$.

For this section we adjust our definition of the $g_i$. For some constant $C>1$ (which we will choose later) let $g_i(x)=\sum_{j=C^{i}}^{C^{i+1}-1}\chi_{B(\frac{1}{2} ,r_j)}(T^jx)$.

The proof we provide is complicated by the fact that without the Khinchin condition on $r_i$ it is possible for $\Vert g_j \Vert_1\gg \Vert g_i \Vert_1$ for some $j>i$ (in contrast to Lemma \ref{lem:L1 bound}). This difficulty is handled for most values of $i$ by appealing directly to Theorem \ref{thm:quant bc}. We must then show that the remaining indices, which are not handled by our appeal to Theorem \ref{thm:quant bc}, make negligible contributions.

The outline of this section is as follows. We break up our indices into two disjoint sets according to a (fixed, large) parameter $M$. Section \ref{sec:reasonable} deals with those times $i$ such that $ir_i<M$. The proof in this section is similar to that in Section \ref{sec:Thm1} but simpler because we do not need to worry about the issues of Section \ref{sec:limsup}. Then in Section \ref{sec:big section} we treat the times $i$ such that $ir_i\geq M$. We partition them into a subset where we may apply Theorem \ref{thm:quant bc} and its complement, whose contributions we show are negligible. Lemma \ref{lem:partition} accomplishes the partitioning, Lemma \ref{lem:G bound} applies Theorem \ref{thm:quant bc}, and Corollary \ref{cor:B bound} controls the size of the blocks where we can not apply Theorem \ref{thm:quant bc}.  We note that the arguments in Section \ref{sec:reasonable} work for any value of $M$. It is for the proofs in Section \ref{sec:big section} that we have to choose a sufficiently large value of $M$.

Throughout this section, in an abuse of notation, $r_{C^L}$ denotes $r_{\lfloor C^L\rfloor}.$

%

\subsection{\boldmath{$ir_i$} small}\label{sec:reasonable}

In this subsection we treat $ir_i < M$.

\begin{prop}\label{prop:reasonable i}
Let $C, M$ be given. Let $E=\{i:r_{C^{i}}<\frac M {C^{i}}\}$. If $\sum_{i\in E} \int g_i=\infty$ then for almost every $x$, 
\[\underset{N\to \infty}{\lim}\, \frac{\sum_{i \in E}^N g_i(x)}{\sum_{i \in E}^N\int g_i}=1.\]
\end{prop}

We first state the appropriate version of approximate $T$-invariance for the $g_i$, an analogue of Lemma \ref{lem:almost invar}.

\begin{lem}\label{lem:almost invar2} 
For all $l<i$,
\[\max_{k<C^{a}} \, \Vert g_i-g_i \circ T^k\Vert_1 < \frac{4}{C-1}C^{a-l} \|g_l\|_1.\]
\end{lem}

\begin{proof}
Exactly as in the proof of Lemma \ref{lem:almost invar}, to bound $\|g_i-g_i\circ T^s\|_1$ we need to bound $8sr_{C^i}$. It is easy to bound $\|g_l\|_1\geq 2C^l(C-1)r_{C^{l+1}}\geq2C^l(C-1)r_{C^i}$. Letting $s=C^a$, we get the desired results after a quick computation.
\end{proof}

\begin{proof}[Proof of Proposition \ref{prop:reasonable i}] 
Suppose that $\sum_{i\in E} \int g_i = \infty$. Write $E = \{a_1 < a_2 < \cdots \}$. The idea of the proof is to show that $H_i=g_{a_i}$ satisfy the conditions (H1), (H2) and (H3$'$) of Proposition \ref{prop:abstract crit} from which the result follows (see Remark \ref{rem:h3 prime}). Recall: 
\[ \mbox{(H3$'$): }  \left| \sum_{j=1}^{N-1} \sum_{i=j+1}^N \int H_i H_j - \int H_i \int H_j \right| < C_2 \sum_{j=1}^{N-1} \|H_j\|_1.  \]
We replace (H3) with (H3$'$) since we cannot appeal to Lemma \ref{lem:L1 bound}.

By our assumption on $r_{C^{a_i}}$ and Lemma \ref{lem:hit bound} (see Remark \ref{rem:no khinchin}) we have $\Vert g_{a_i}\Vert_{\infty}<1+2M\sigma^{-1}$ and so condition (H1) is satisfied.

Condition (H2) is one of our assumptions.

Condition (H3$'$) follows from the proof of Proposition \ref{prop:correlation}, but requires a few modifications. $C^{a_i}$ play the role of $n_i$ and we let $u'_j=\max(\{j+1\} \cup \{i: C^{a_i}<\frac{1}{r_{C^{a_j}}}\})$. Let $K$ be chosen for $\sigma$ as in Lemma \ref{lem:dense2} and $v'=\log_C(K)+\log_C(2^{\hat{q}})$. Dividing up our sum as before we have:
\begin{align}
	\sum_{i={j+1}}^N|\int g_{a_i}g_{a_j}&-\int g_{a_j} \int g_{a_i}| = \sum_{j+1 < i \leq u'_j }^N|\int g_{a_i}g_{a_j}-\int g_{a_j} \int g_{a_i}| \nonumber \\
		& +\sum_{i=j+1 \ or \ u'_j<i\leq u'_j+4v'}^N |\int g_{a_i} g_{a_j}-\int g_{a_i} \int g_{a_j}| \nonumber \\
		& +\sum_{i>u'_j+4v'}^N |\int g_{a_i}g_{a_j}-\int g_{a_j} \int g_{a_i}|. \nonumber
\end{align}

Bounding the first sum by a constant multiple of $\Vert g_{a_j} \Vert_1$ follows the argument of Proposition \ref{prop:correlation}, Step 1. The argument requires only a bound of the type $e_T(n)>\frac{\sigma}{n}$, which we have, and the argument to extend Lemma \ref{lem:hit bound} as in the above proof of (H1). After summing over $1\leq j \leq N-1$, this portion of the sum satisfies (H3$'$).

Bounding the second term by a constant multiple of $\Vert g_{a_j} \Vert_1$ is also a direct application of Proposition \ref{prop:correlation}, Step 2. It suffices to show that there exists $\tilde{C} $ so that $\|g_{a_{j+1}}\|_{\infty} + \sum_{i=u'_j}^{u'_j+4v'} \|g_{a_i}\|_{\infty}<\tilde{C}$.  Because $v'$ is a constant, it suffices for  $\|g_{a_i}\|_\infty$ to be uniformly bounded, which follows from (H1). After summing over $1\leq j \leq N-1$, this portion of the sum satisfies (H3$'$).

The third sum requires the most care. Given $i$, for each $k > 4v'$ let 
\[b_{i,k} = \min \{ j : i = u'_j+k \} \ \mbox{ and } \ d_{i,k} = \max \{ j : i=u'_j +k\} \] 
and set $b_{i,k}=d_{i,k}=0$ if no $u'_j$ equals $i-k$. With these definitions, all $j$ between $b_{i,k}$ and $d_{i,k}$ have the same value of $u'_j$, namely $u'_{b_{i,k}}=u'_{d_{i,k}}$. Let $h_{i,k} = \sum_{l=b_{i,k}}^{d_{i,k}} g_{a_l}$. Note that if $b_{i,k}=d_{i,k}=0$, then $h_{i,k}=0$. 

First, we give a bound on $\|h_{i,k}\|_\infty$.

\begin{slem}
For all $k$, $\| h_{i,k} \|_\infty$ is uniformly bounded, independent of $i$ and $k$, by a constant depending only on $C$ and $\sigma$.
\end{slem}

\begin{proof}[Proof of Sublemma:]
We need only consider situations where $b_{i,k}$ and $d_{i,k}$ are nonzero. 

As $i$ and $k$ are fixed within this proof, to simplify notation below let us write $b=b_{i,k}$, $d=d_{i,k}$ and $h=h_{i,k}$. Then
\[ h(x) =  \sum_{l=b}^d g_{a_l}(x) = \sum_{l=b}^{d} \sum_{j=C^{a_l}}^{C^{a_l+1}-1} \chi_{B(\frac{1}{2},r_j)}(T^jx).\]
Following the argument of Lemma \ref{lem:hit bound}, we find that 
\begin{align} 
	\sum_{l=b}^{d} \sum_{j=C^{a_l}}^{C^{a_l+1}-1} \chi_{B(\frac{1}{2},r_j)}(T^jx) & \leq \lceil \frac{2r_{C^{a_b}}}{\sigma/C^{a_d+1}}\rceil \nonumber \\
		& \leq 1+ \frac{2C}{\sigma} r_{C^{a_b}} C^{a_d} \nonumber \\
		& \leq 1+ \frac{2C}{\sigma} r_{C^{a_b}} C^{a_{u'_d}} \nonumber \\
		& = 1+ \frac{2C}{\sigma} r_{C^{a_b}} C^{a_{u'_b}} \nonumber \\
		& < 1+ \frac{2C}{\sigma} r_{C^{a_b}} \frac{1}{r_{C^{a_b}}}  = 1+ \frac{2C}{\sigma} \nonumber 
\end{align}
using the definition of $u'_b$ at the last step. This proves the Sublemma.
\end{proof}

Let $i>u'_j+4v'$ be fixed, and write $i = u'_j+k$. We want to bound $|\int g_{a_i} h_{i,k} - \int g_{a_i} \int h_{i,k} |$. We continue to follow the argument of Step 3 of Proposition \ref{prop:correlation}. Let $b' = i-\frac{3k}{4}$ and let $S_{i,k}$ be the set of discontinuities of $T^{C^{a_{b'}}}$. By Lemma \ref{lem:dense2}, $S_{i,k}$ is $\frac{K}{C^{a_{b'}}}$-dense. 

We apply Lemma \ref{lem:const on blocks} as before to obtain $f_{i,k}$ approximating $h_{i,k}$. Then $\|f_{i,k}\|_1\leq \|h_{i,k}\|_1$, $\|f_{i,k}\|_\infty\leq \|h_{i,k}\|_\infty$, and with some short calculation,
\begin{equation}\label{eqn:bound a}
	 \Vert f_{i,k} - h_{i,k} \Vert_1 \leq 2 C^{a_{b_{i,k}}+1} \frac{K}{C^{a_{b'}}} = 2CC^{a_{b_{i,k}}}\frac{K}{C^{a_{u'_j+\frac k 4}}} \leq \tilde{K}  C^{-\frac{k}{4}} \Vert h_{i,k} \Vert_1
\end{equation}
for some uniform $\tilde{K}>0$. We have used the definition of $u'_j$, including the fact that $u'_j>b_{i,k}$, to bound $\Vert h_{i,k} \Vert_1 \geq 2C^{a_{b_{i,k}}}r_{C^{a_{b_{i,k}+1}}} \geq 2C^{a_{b_{i,k}}}r_{C^{a_{u'_j}}} \geq \frac{2C^{a_{d_{i,k}}}(C-1)}{C^{a_{u'_j+1}}}$.

We apply Theorem \ref{thm:quant bc} as before. For any $C^{a_{b'}}$-block interval $J$ and any $x$,
\[ \left|  |\{ 0 < k \leq C^{a_{i-k/4}} : T^kx\in J \}| -  C^{a_{i-k/4}} |J| \right| < C^{a_{i-k/4}} C_1'e^{-\tilde C_2 \frac{k}{2}} |J|\]
where $\tilde{C}_2=\frac{C_2}{\log_C(2)}$. Proceeding precisely as in Proposition \ref{prop:correlation} and the proof of Proposition \ref{prop:limsup}, we obtain
\begin{align}
	\left| \int g_{a_i} f_{i,k} - \int g_{a_i} \int f_{i,k} \right| & \leq \|f_{i,k}\|_1 C_1' e^{-\tilde C_2 \frac{k}{2}} \|g_{a_i}\|_1 + \|f_{i,k}\|_\infty \tilde c_{i,k}  \nonumber
\end{align}
where
\[ \tilde c_{i,k} = \max \{  \Vert g_{a_i} - g_{a_i} \circ T^k \Vert_1 :  0 \leq k \leq C^{a_{i-k/4}} \}.  \]

We apply Lemma \ref{lem:almost invar2} with $a=a_{i-k/4}$ and $l=a_{i-1}$ and obtain that $\tilde c_{i,k} < \frac{4}{C-1}C^{a_{i-k/4}-a_{i-1}}\|g_{a_{i-1}}\|_1 \leq \frac{4C}{C-1}C^{-\frac{k}{4}}\|g_{a_{i-1}}\|_1$.
Therefore, using that $\|f_{i,k}\|_1 \leq \|f_{i,k}\|_\infty \leq \|h_{i,k}\|_\infty$ and $\|g_{a_j} \|_\infty$ are universally bounded,
\begin{equation}\label{eqn:bound b}  
	| \int g_{a_i} f_{i,k} - \int g_{a_i} \int f_{i,k} | \leq \hat D_1  e^{-\tilde C_2 \frac{k}{2}} \|g_{a_i}\|_1 + \hat D_2 C^{-\frac{k}{4}} \|g_{a_{i-1}}\|_1.
\end{equation}

We now follow the end of the proof of Proposition \ref{prop:correlation}. The exponential decay in equations \eqref{eqn:bound a} and \eqref{eqn:bound b} and the universal bound on $\|g_{a_i}\|_\infty$ allows us to show that
\[ \sum_{i=1}^N \sum_{k=1}^{N-1} \left| \int g_{a_i} h_{i,k} - \int g_{a_i} \int h_{i,k}\right| \leq C_2 \sum_{i=1}^N \|g_{a_i}\|_1.\]
It is straightforward to check that this implies (H3$'$) with $H_i=g_{a_i}$, as desired.
\end{proof}

%

\subsection{\boldmath{$ir_i$} big} \label{sec:big section}

When $ir_i\geq M$ we want to use the next lemma, which requires $M$ sufficiently large:

\begin{lem} \label{lem:big r} 
Let $T$ be of constant type and $C>1$. Then, uniformly in $a\in[0,1]$,
\[\lim_{M\to\infty} \ \limsup_{j\to \infty} \ \sup_x \left| \left( \frac{C^{j+1}}{2M(C^{j+1}-C^j)}\sum_{i=C^j}^{C^{j+1}-1}\chi_{B(a,\frac M {C^{j+1}})}(T^ix)\right) -1\right|=0.\]
\end{lem}

\begin{proof}

Fix $\epsilon>0$. Fix $C$ and a value of $k$ to be chosen later. Because $T$ is of constant type, for any choice of $k$, for sufficiently large $M$ (which depends on $k$), any interval $B(a,\frac{M}{C^{j+1}})$ can be approximated up to an $\epsilon$ proportion by $C^{j-k}$-blocks (of $T$), for $j$ sufficiently large (independent of $a$). The remainder of the proof is determining how large $k$ needs to be. 

We now choose $n_i=3^i$ in the statement of Theorem \ref{thm:quant bc} with $c=\sigma$.
By Theorem \ref{thm:quant bc}, by choosing $Q$ large enough (given $C,\sigma$ and $\frac \epsilon 4$) we have that if $n_r$ is the largest $n_i<C^{j-k+Q}$  and  $\hat{J}$ is any $C^{j-k}$-block we have 
\begin{equation}\label{eq:chunk estimate}
	\left|\frac{1}{n_r}\sum_{i=1}^{n_r}\chi_{\hat{J}}(T^iT^{C^j}x)-|\hat{J}|\right|<\epsilon/4 \ \ \mbox{ for all } x.
\end{equation}
Note that $Q$ may be chosen independent of $k$.

First, choose $k$ so large that $C^{j-k+Q}<C^{j+1}-C^j$. Let $d=\left\lfloor \frac{C^{j+1}-C^j}{n_r}\right\rfloor > 0$. Decompose the sum in the lemma into $d$ sums over $n_r$ indices each, together with a remainder sum of length $<n_r$. Applying inequality \eqref{eq:chunk estimate} to the length-$n_r$ sums, we obtain

\begin{align}\label{eq:b2}
	\Big| \frac{1}{C^{j+1}-C^j} & \sum_{i=1}^{C^{j+1}-C^j}  \chi_{\hat{J}}(T^iT^{C^j-1}x)-|\hat{J}| \Big| \nonumber \\
	& =\frac 1 {C^{j+1}-C^j}\left(\sum_{\ell=0}^{d-1}\sum_{i=\ell n_r+C^j}^{(\ell+1)n_r+C^j}\chi_{\hat{J}}(T^ix)+\sum_{i=dn_r+C^j}^{C^{j+1}}\chi_{\hat{J}}(T^ix) \right)-|\hat{J}| \nonumber \\
	& \leq \frac{2}{d} \Big| \sum_{\ell=0}^{d-1} \left( \frac{1}{n_r} \sum_{i=\ell n_r+C^j}^{(\ell+1)n_r +C^j-1} \chi_{\hat J}(T^ix) -|\hat J| \right) +\frac{1}{dn_r} \sum_{i=dn_r C^j}^{C^{j+1}-1} \chi_{\hat J} (T^ix) \Big| \nonumber \\
	& \leq \frac{2}{d} d \frac{\epsilon}{4} + \frac{2}{dn_r} \max_y \sum_{i=1}^{n_r} \chi_{\hat J} (T^iy) \leq \frac{\epsilon}{2}+\frac{2}{d}.
\end{align}

Similarly, let $U$ be the subset of $B(a,\frac{M}{C^{j+1}})$ that is not made up of $C^{j-k}$ blocks. It is at most 2 intervals. By the constant type assumption, we bound
\begin{equation}\label{eq:b4}
\left|\sum_{i=C^j}^{C^{j+1}-1}\chi_U(T^ix)\right| \leq (C^{j+1}-C^j)\sigma^{-1}|U|+1
\end{equation}
for all $x$, independent of $a$. Given any choice of $k$, we choose $M$ large enough at the beginning to make $|U|<\epsilon \frac{2M}{C^{j+1}}$, controlling the contribution of inequality \eqref{eq:b4}.

The lemma now follows if we can choose $k$ and $M$ large enough to make equation \eqref{eq:b2} less than $\epsilon$. This is clear as, for large $j$, by taking $k$ large we can ensure $n_r$ is small compared with $C^{j+1}-C^j$, and therefore that $d$ is large. This completes the proof. 
 \end{proof}

The next lemma lets us split up the natural numbers into subsets where we appeal to Proposition \ref{prop:reasonable i}, subsets where we can apply Lemma \ref{lem:big r} (see Lemma \ref{lem:G bound}), and a small remaining piece that we show is negligible (see Corollary \ref{cor:B bound}).

Throughout the remainder of this section $C>1$ should be thought of as very close to 1. Define
\[G_{C,\rho,M}=\left\{j \in \mathbb{N}:r_{ C^{j+1}}\geq \frac M {C^{j+1}} \text{ and } \rho r_{C^j} \leq r_{C^{j+1}}\right\}\]
and 
\[B_{C,\rho,M}=\left\{j \in \mathbb{N}\setminus G_{C,\rho,M} :r_{ C^{j+1}}\geq \frac M {C^{j+1}}\right\}.\]

When $\rho$ is very close to 1, $G_{C,\rho,M}$ is the set of indices where Cauchy condensation (that is, replacing $r_i$ with $r_{C^{j+1}}$ for $C^j<i\leq C^{j+1}$) is a mild change in the size of radii. 

\begin{lem}\label{lem:partition}  
For any $\epsilon>0$ and any $\rho<1$, there exists $C>1$ so that for any non-increasing sequence $\{r_i\}\subset \mathbb{R}^+$, we have
\[\limsup_{N \to \infty} \frac{\sum_{ j \in B_{C,\rho,M}: C^{j+1}<N}\, (C^{j+1}-C^j)r_{C^j}}{\sum_{i=1}^N\, r_i}<\epsilon\] for all $M>2\max\{1,r_1\}$.
\end{lem}

\begin{proof}
Let $\epsilon>0$ and $\rho<1$ be given. By assumption $r_1<\frac{M}{2}$. 

Enumerate $B_{C,\rho,M}=\{b_1, b_2, \ldots\}$ in increasing order. 
\\

\noindent\textit{Claim:} $b_n \geq n \log_C(\frac{1}{\rho}) + \log_C(2)-1$.

\textit{Proof of claim:} By definition of $B_{C,\rho,M}$, each new $b_i$ decreases $r_{C_j}$ by a factor of at least $\rho$. Since $r_{C^{b_1}}<\frac{M}{2}$, this implies that $r_{C^{b_n}} < \frac{M}{2} \rho^{n-1}$. Therefore, using again the definition of $B_{C,\rho,M}$, 
\[ \frac{M}{C^{b_n+1}} \leq r_{C^{b_n+1}} \leq \rho r_{C^{b_n}} < \frac{M}{2} \rho^n.\]
Taking $\log_C$ of both sides yields the claim.
\\

Let $S_0 = \emptyset$. Define $S_k$ inductively by letting $S_{k+1}$ be the $d:=\left\lceil \frac{1}{2} \log_C(\frac{1}{\rho})\right\rceil$ largest indices in 
\[\{1,2,\ldots b_{k+1}\} \setminus \left(B_{C,\rho,M} \cup \bigcup_{i=1}^kS_i\right).\]
The claim above ensures that, for any choice of $\rho$ if $C>1$ is small enough, such a set exists.

To prove the Lemma, it clearly suffices to show that for all small enough $C>1$, for all sufficiently large $k$, we have
\begin{equation}\label{eq:sum above}
\epsilon \sum_{j\in S_k} \sum_{i=C^j}^{C^{j+1}-1}2r_i>2(C^{b_k+1}-C^{b_k})r_{C^{b_k}}.
\end{equation}

First, we choose $C>1$ such that $C<\frac{1}{\rho}$. Write $S_k = \{ u_1 > u_2 > \cdots > u_d \}.$ Then,
\[ \sum_{j=1}^d \sum_{i=C^{u_j}}^{C^{u_j +1}-1} 2r_i \geq \sum_{j=1}^d 2 r_{C^{u_j+1}} C^{u_j} (C-1).\]
Suppose that $m_j \geq 0$ of the $b_i$ lie in $[u_j+1,b_k)$. Then, from the definition of $B_{C,\rho,M}$, $r_{C^{u_j+1}} > (\frac{1}{\rho})^{m_j} r_{C^{b_k}}$ and $u_j = b_k-j-m_j$. Applying this to the bound above, we have 
\[ \sum_{j=1}^d \sum_{i=C^{u_j}}^{C^{u_j +1}-1} 2r_i \geq \sum_{j=1}^d 2 \left(\frac{1}{\rho}\right)^{m_j} r_{C^{b_k}} C^{b_k-j}\left(\frac{1}{C}\right)^{m_j}(C-1).\]
Then, using the assumption $C<\frac{1}{\rho}$ and so $(\frac 1 \rho)^{m_j}(\frac 1 C)^{m_j}>1$ and carrying out the sum and using the definition of $d$, we find that 
\begin{equation}\label{denom bound}
	\sum_{j=1}^d \sum_{i=C^{u_j}}^{C^{u_j +1}-1} 2r_i \geq 2 r_{C^{b_k}} C^{b_k}(1-C^{-d}) \geq 2 r_{C^{b_k}} C^{b_k}(1-\sqrt{\rho}).
\end{equation}

If we pick $C>1$ so that
\[ \epsilon 2r_{C^{b_k}} C^{b_k}(1-\sqrt{\rho}) > 2r_{C^{b_k}} C^{b_k}(C-1)\] (which is clearly possible) then inequality \eqref{denom bound} shows that inequality \eqref{eq:sum above} establishes the lemma.
\end{proof}

To control $\sum_{k=C^j}^{C^{j+1}-1} \chi_{B(\frac{1}{2} , r_k)}(T^kx)$ where $j \in B_{C,\rho,M}$ we need the following result.

\begin{lem}\label{lem:control bad} 
Let $T$ be an IET of constant type, $\{r_i\}$ nonincreasing.  Then for all $x$,  
\[\sum_{i=C^j}^{C^{j+1}-1} \chi_{B(\frac{1}{2} ,r_i)}(T^ix)<\frac{2r_{C^j}}{\sigma}(C^{j+1}-C^j)+1.\]
\end{lem}

The proof of this Lemma is essentially the same as the proof of Lemma \ref{lem:hit bound}.

\begin{cor}\label{cor:B bound} 
For every $\epsilon>0$ and $\rho<1$ there exists $C$ so that for all $x$, and all large enough $M$ we have 
\[\frac{\sum_{j\in B_{C,\rho,M}}^{N'} \sum_{i=C^j}^{C^{j+1}-1}\chi_{B(\frac{1}{2}, r_i)}(T^ix)}{\sum_{j =1}^{N'}\sum_{i= C^j}^{C^{j+1}-1}2r_i}<\epsilon\]
for sufficiently large $N'$.
\end{cor}

\begin{proof}
By Lemma \ref{lem:control bad}, for all $j$,
\[  \sum_{i=C^j}^{C^{j+1}-1}\chi_{B(\frac{1}{2}, r_i)}(T^ix) < \frac{2 r_{C^j}}{\sigma} C^j (C-1)r_{C^j}+1. \]
Using this fact, for all $x$ we have,
\[ \frac{\sum_{j\in B_{C,\rho,M}}^{N'} \sum_{i=C^j}^{C^{j+1}-1} \chi_{B(\frac{1}{2},r_i)}(T^ix)}{\sum_{j=1}^{N'} \sum_{i=C^j}^{C^{j+1}-1}2r_i} < 
		\frac{\sum_{j\in B_{C,\rho,M}}^{N'} \frac{2r_{C^j}}{\sigma}C^j(C-1)+1}{\sum_{j=1}^{N'} \sum_{i=C^j}^{C^{j+1}-1}2r_i}.\]
Note that for $j\in B_{C,\rho,M}$, we have $C^j r_{C^j} \geq \frac{M}{C\rho}$. Therefore, for $M$ sufficiently large (say, $>(C-1)^{-1}$),we have 
\[\frac{\sum_{j\in B_{C,\rho,M}}^{N'} {1}}{\sum_{j=1}^{N'} \sum_{i=C^j}^{C^{j+1}-1}2r_i}=
	O\left(\frac{\sum_{j\in B_{C,\rho,M}}^{N'} \frac{2r_{C^j}}{\sigma}C^j(C-1)}{\sum_{j=1}^{N'} \sum_{i=C^j}^{C^{j+1}-1}2r_i}\right).\]
Therefore, it is sufficient to bound $\frac{\sum_{j\in B_{C,\rho,M}}^{N'} \frac{2r_{C^j}}{\sigma}C^j(C-1)}{\sum_{j=1}^{N'} \sum_{i=C^j}^{C^{j+1}-1}2r_i}$.
We apply Lemma \ref{lem:partition} with $N=C^{N'+1}-1$ to this and obtain that for all sufficiently large $M$ and $N'$, 
 $\frac{\sum_{j\in B_{C,\rho,M}}^{N'} \frac{2r_{C^j}}{\sigma}C^j(C-1)}{\sum_{j=1}^{N'} \sum_{i=C^j}^{C^{j+1}-1}2r_i}$ is bounded by some fixed multiple of $\epsilon$, proving the result.
\end{proof}

\begin{lem}\label{lem:G bound} 
For any $\epsilon>0$ and $C>1$ there exists $M_0>1$ so that if $M>M_0$ and $\rho=1-\frac{\epsilon^2 \sigma}{4}$ then for all sufficiently large $j \in G_{C,\rho,M}$ and all $x$,
\[\frac{\sum_{i=C^j}^{C^{j+1}} \chi_{B(\frac 1 2 ,r_i)}(T^ix)}{\sum_{i=C^j}^{C^{j+1}}2r_i}\in [1-\epsilon,1+\epsilon].\]
\end{lem}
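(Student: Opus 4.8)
\textbf{Plan of proof for Lemma \ref{lem:G bound}.}
The goal is to show that for $j \in G_{C,\rho,M}$, the Birkhoff sum $\sum_{i=C^j}^{C^{j+1}} \chi_{B(1/2,r_i)}(T^ix)$ is, up to an $\epsilon$-proportion, equal to $\sum_{i=C^j}^{C^{j+1}} 2r_i$ \emph{uniformly in} $x$. The essential point is that membership in $G_{C,\rho,M}$ packages exactly the two features one needs: first, $r_{C^{j+1}}\geq M/C^{j+1}$ ensures the target radii are large enough (relative to the scale $C^{j+1}$) that Lemma \ref{lem:big r} applies for all the radii in the range; second, $r_{C^j}\leq \rho\, r_{C^{j+1}}$ with $\rho<1$ forces the radii to be essentially constant across $[C^j,C^{j+1}]$ (they vary by a factor of at most $\rho^{-1}$, which is bounded), so we can compare against a single radius.

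The concrete steps: First I would set $s = C^{j+1} r_{C^{j+1}}$, so that $s\geq M$, and observe that by monotonicity $r_i$ lies between $r_{C^{j+1}}$ and $\rho^{-1} r_{C^{j+1}}$ for all $i\in[C^j,C^{j+1}]$. Hence the indicator $\chi_{B(1/2,r_i)}$ is sandwiched between $\chi_{B(1/2,r_{C^{j+1}})}$ and $\chi_{B(1/2,\rho^{-1}r_{C^{j+1}})} = \chi_{B(1/2, \rho^{-1}s/C^{j+1})}$. Summing over $i\in[C^j,C^{j+1}]$ and applying Lemma \ref{lem:big r} to each of these two fixed-radius sums (with the parameter there being $s$, respectively $\rho^{-1}s$, both $\geq M$), for $M$ large enough each of these two sums is within $\epsilon'$ (a small error, relative to $C^{j+1}-C^j$ times the respective radius) of $(C^{j+1}-C^j)\cdot 2r_{C^{j+1}}$, respectively $(C^{j+1}-C^j)\cdot 2\rho^{-1}r_{C^{j+1}}$. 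Then I would choose $\rho$ close enough to $1$ (depending on $\epsilon$) that $\rho^{-1}$ is within $\epsilon$ of $1$, so that both bounds collapse to $(1\pm\epsilon)(C^{j+1}-C^j)2r_{C^{j+1}}$; finally, using again that $r_i$ and $r_{C^{j+1}}$ differ by at most a factor $\rho^{-1}\in[1,1+\epsilon]$, the denominator $\sum_{i=C^j}^{C^{j+1}}2r_i$ is itself within $\epsilon$ of $(C^{j+1}-C^j)2r_{C^{j+1}}$, and the quotient lies in $[1-O(\epsilon),1+O(\epsilon)]$; replacing $\epsilon$ by a suitable constant multiple at the outset gives the stated bound.

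One technical point to watch: Lemma \ref{lem:big r} is a $\limsup_j$ statement for a \emph{fixed} $s$, whereas here $s = C^{j+1}r_{C^{j+1}}$ varies with $j$. To handle this cleanly I would instead invoke the uniform mechanism inside the proof of Lemma \ref{lem:big r}: for any radius of the form $\tilde s/C^{j+1}$ with $\tilde s\geq M$, Lemma \ref{lem:dense block} lets us approximate $B(1/2,\tilde s/C^{j+1})$ up to an $\epsilon$-proportion by $C^{j-k}$-blocks for $k=k(C,\sigma,\epsilon)$ fixed, Theorem \ref{thm:quant bc} with $L=k$ equidistributes each such block uniformly in $x$ with error $C_1 e^{-C_2 k}$, and the residual part of the ball contributes at most $(2\sigma^{-1}+1)\epsilon\,\tilde s\,(C^{j+1}-C^j)/C^{j+1}$ hits by Lemma \ref{lem:hit bound} — all bounds being uniform over $\tilde s \geq M$ and over $x$, provided $j$ (equivalently $M$, through $r_{C^{j+1}}\geq M/C^{j+1}$ and the divergence of $ir_i$ along $G$) is large. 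This uniformity over $\tilde s$ is the one place the argument needs a small amount of care beyond the plug-and-chug described above; everything else is the elementary monotonicity bookkeeping on the $r_i$.
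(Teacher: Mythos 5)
Your proposal is essentially correct and arrives at the same place as the paper, but via a slightly different decomposition. Where you sandwich $\chi_{B(1/2,r_i)}$ symmetrically between $\chi_{B(1/2,r_{C^{j+1}})}$ and $\chi_{B(1/2,\rho^{-1}r_{C^{j+1}})}$ and invoke Lemma~\ref{lem:big r} (really, the uniform mechanism underlying it) on both sides, the paper is asymmetric: it lower-bounds the hits to the inner ball $B(1/2,r_{C^{j+1}})$ via Lemma~\ref{lem:big r}, and then upper-bounds the \emph{excess} hits to the two boundary pieces $B(1/2,r_i)\setminus B(1/2,r_{C^{j+1}})$ (which are fixed intervals of length at most $(1-\rho)r_{C^j}$) using only the pigeonhole bound of Lemma~\ref{lem:hit bound}, never touching equidistribution for the upper direction. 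The paper's route is marginally more economical; yours is cleaner to state and spares the reader the explicit annulus estimate. You do both need, and you correctly flag, the point that Lemma~\ref{lem:big r} is a $\limsup_j$ statement for fixed $s$ while the application feeds in the $j$-dependent $s_j=C^{j+1}r_{C^{j+1}}$, and your resolution — rerun the proof of Lemma~\ref{lem:big r} noting that the block approximation, Theorem~\ref{thm:quant bc}, and Lemma~\ref{lem:hit bound} estimates are uniform over all $\tilde s\geq M$ — is exactly the right fix and is implicitly being used by the paper as well, where the choice of $L$ and $M_0=2L$ plays this role. One small cleanup: the definition of $G_{C,\rho,M}$ in the paper reads $r_{C^j}\leq\rho r_{C^{j+1}}$, which with $\rho<1$ and $r$ nonincreasing can only mean $r_{C^{j+1}}\geq\rho r_{C^j}$; you quietly switch to the sensible interpretation ($r_i$ between $r_{C^{j+1}}$ and $\rho^{-1}r_{C^{j+1}}$), which is the right reading and the one the paper's own proof uses.
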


\begin{proof}
Fix $\epsilon>0$. We may assume that $\epsilon<\frac{3}{8}$ and that $\sigma<1$. First, note that to prove the Lemma it is sufficient to show that for sufficiently large $j \in G_{C,\rho,M},$
\[\frac{\sup_x \sum_{i=C^j}^{C^{j+1}} \chi_{B(\frac{1}{2},r_i)}(T^i x)}{\inf_x \sum_{i=C^j}^{C^{j+1}} \chi_{B(\frac{1}{2},r_{C^{j+1}})}(T^i x) } \leq 1+\epsilon.\]

By Lemma \ref{lem:big r} we have  that if $M$ is large enough
\[\frac{\sup_x \sum_{i=C^j}^{C^{j+1}} \chi_{B(\frac{1}{2},r_{C^{j+1}})}(T^i x)}{\inf_x \sum_{i=C^j}^{C^{j+1}} \chi_{B(\frac{1}{2},r_{C^{j+1}})}(T^i x) }\leq1+\epsilon,\] 
and so it suffices to show that for any $\epsilon>0,$ 
\begin{equation}\label{close1}
	\frac{\sup_x \sum_{i=C^j}^{C^{j+1}} \chi_{B(\frac{1}{2},r_i)\setminus B(\frac{1}{2},r_{C^{j+1}})}(T^i x)}{\inf_x \sum_{i=C^j}^{C^{j+1}} \chi_{B(\frac{1}{2},r_{C^{j+1}})}(T^i x) }\leq\epsilon.\end{equation}

First, we bound the numerator of \eqref{close1}. Consider $B(\frac{1}{2} ,r_i)\setminus B(\frac{1}{2} ,r_{C^{j+1}})$ for $i\geq C^j$. It consists of two intervals of size at most $(1-\rho)r_{C^j}$ since $j\in G_{C,\rho,M}$. By Lemma \ref{lem:control bad} and our choice of $\rho$,
\begin{align}
	\sup_x \sum_{i=C^j}^{C^{j+1}} \chi_{B(\frac{1}{2},r_i)\setminus B(\frac{1}{2},r_{C^{j+1}})}(T^i x) &\leq 2+2(1-\rho)r_{C^j}2\frac{(C^{j+1}-C^j)}{\sigma} \nonumber \\
\label{eq:array}	&= 2+ \frac{\epsilon^2}{2} 2r_{C^j}(C^{j+1}-C^j). 
\end{align}

To bound the denominator of \eqref{close1} below we appeal to Lemma \ref{lem:big r}.  First, let $M_0'$ be so large that for all $a\in [0,1]$ and all $x$, 
\[ \sum_{i=C^j}^{C^{j+1}}  \chi_{B(a,\frac{M_0'}{C^{j+1}})}(T^ix) \geq \left(1-\frac{\epsilon}{3}\right) 2 \frac{M_0'}{C^{j+1}}(C^{j+1}-C^j) \]
for sufficiently large $j$ (independent of $a$). Let $M_0 = \max\{3 M_0',4\epsilon^{-1}C(C-1)^{-1}\}$. We consider $j\in G_{C,\rho,M}$ with $M>M_0$, $r_{C^{j+1}} \geq 3\frac{M_0'}{C^{j+1}}$. Partition $B(\frac{1}{2},r_{C^{j+1}})$ into $\lambda\geq3$ intervals of size $\frac{2M_0'}{C^{j+1}}$ and one interval of size $<\frac{2M_0'}{C^{j+1}}$. Let $B$ be the union of the $\lambda$ intervals. Applying Lemma \ref{lem:big r} as above to each of the $\lambda$ intervals forming $B$ we obtain, for sufficiently large $j\in G_{C,\rho,M}$,
\begin{align} 
	\sum_{i=C^j}^{C^{j+1}} \chi_{B(\frac{1}{2},r_{C^{j+1}})}(T^ix) & \geq \sum_{i=C^j}^{C^{j+1}} \chi_{B}(T^ix) \nonumber \\
		& \geq \left(1-\frac{\epsilon}{3}\right) \lambda \frac{2M_0'}{C^{j+1}} (C^{j+1}-C^j) \nonumber
\end{align}

Further, because $(\lambda+1) \frac{2M_0'}{C^{j+1}}>2r_{C^{j+1}}$ this is	
\begin{align}
	\phantom{OOOOOOOOOOOOOOOOO}&  \geq \left(1-\frac{\epsilon}{3}\Big) \Big(\frac{\lambda}{\lambda+1}\right) 2r_{C^{j+1}} (C^{j+1}-C^j). \nonumber \\
		&  \geq \left(1-\frac{\epsilon}{3}\right) \left(\frac{3}{4}\right) 2r_{C^{j+1}} (C^{j+1}-C^j). \nonumber \\
\label{eq:array2}		& \geq\frac{1}{2} 2r_{C^{j+1}}(C^{j+1}-C^j). 
\end{align}
since $\epsilon<1.$

Combining inequalties \eqref{eq:array} and \eqref{eq:array2}, 
\[ \frac{\sup_x \sum_{i=C^j}^{C^{j+1}} \chi_{B(\frac{1}{2},r_i)\setminus B(\frac{1}{2},r_{C^{j+1}})}(T^i x)}{\inf_x \sum_{i=C^j}^{C^{j+1}} \chi_{B(\frac{1}{2},r_{C^{j+1}})}(T^i x) }  <\frac{1+ \frac{\epsilon^2}{2}   r_{C^j}(C^{j+1}-C^j)}{\frac{1}{2}  r_{C^{j+1}}(C^{j+1}-C^j)}. \]
Now, $\frac{2}{ r_{C^{j+1}}(C^{j+1}-C^j)} \leq \frac{\epsilon}{2}$ using our choice of $M_0\geq 4\epsilon^{-1}C(C-1)^{-1}$. Also, $\frac{\epsilon^2 r_{C^j}(C^{j+1}-C^j)}{ r_{C^{j+1}}(C^{j+1}-C^j)} \leq \frac{\epsilon^2}{\rho} \leq \frac{4}{3}\epsilon^2 \leq \frac{\epsilon}{2}$ using the fact that $j\in G_{C, \rho,M}$, our choice of $\rho$, and the fact that $\epsilon< \frac{3}{8}.$ This completes the proof.
\end{proof}

We note the following facts about the results above. First, $\rho$ does not depend on $C$ and so we may choose $C$ for Lemma \ref{lem:partition} to hold. Also our only requirement on $M$ in Corollary \ref{cor:B bound} and Lemma \ref{lem:G bound} is that it is large enough. So given $\rho,C$ we may choose (a possibly larger) $M$ so that Corollary \ref{cor:B bound} and Lemma \ref{lem:G bound} hold.

We are now ready to prove Theorems \ref{constant type} and \ref{thm:iet bad approx}.

\begin{proof}[Proof of Theorems \ref{constant type} and \ref{thm:iet bad approx}]

It suffices to show that for all $\delta>0$ there exists $C>1$ so that 
\[ \liminf_{N \to \infty} \frac{ \sum_{j=1}^N \sum_{i=C^j}^{C^{j+1}}\chi_{B(\frac{1}{2} ,r_i)}T^ix}{\sum_{j=1}^N \sum_{i=C^{j}}^{C^{j+1}} 2r_i}>1-\delta\]
and 
\[\limsup_{N \to \infty} \frac{ \sum_{j=1}^N \sum_{i=C^j}^{C^{j+1}}\chi_{B(\frac{1}{2} ,r_i)}T^ix}{\sum_{j=1}^N \sum_{i=C^{j}}^{C^{j+1}} 2r_i}<1+\delta .\]
Choose $\epsilon=\frac \delta 2$ and $\rho=1-\frac{\epsilon^2\sigma}{4}$. Following Corollary \ref{cor:B bound}, choose $C$ for this $\rho$ and $\epsilon$. Following Lemma \ref{lem:G bound} and Corollary \ref{cor:B bound}, choose $M$ for these $\rho, C, \epsilon$. Then by Lemma \ref{lem:G bound} we have
\[\limsup_{N \to \infty}\frac{\sum_{j \in G_{C,\rho,M}}^N \sum_{i=C^j}^{C^{j+1}}\chi_{B(\frac{1}{2}, r_i)}(T^i x)}{\sum_{j\in G_{C,\rho,M}}^N\sum_{i=C^j}^{C^{j+1}}2r_i}<1+\frac{\delta}{2}\]
and
\[\liminf_{N \to \infty}\frac{\sum_{j\in G_{C,\rho,M}}^N \sum_{i=C^j}^{C^{j+1}}\chi_{B(\frac{1}{2}, r_i)}(T^i x)}{\sum_{j \in G_{C,\rho,M}}^N\sum_{i =C^j}^{C^{j+1}}2r_i}>1-\frac{\delta}{2}.\]

Proposition \ref{prop:reasonable i} implies 
\[\lim_{N \to \infty} \frac{\sum_{j \notin (G_{C,\rho,M}\cup B_{C,\rho,M})}^N\sum_{i=C^j}^{C^{j+1}}\chi_{B(\frac{1}{2} , r_i)}(T^ix)} {\sum_{j \notin (G_{C,\rho,M}\cup B_{C,\rho,M})}^N\sum_{i=C^j}^{C^{j+1}}2r_i}=1\]
for almost every $x$. By Corollary \ref{cor:B bound} 
\[\limsup_{N \to \infty} \frac{\sum_{j\in B_{C,\rho,M}}^N \sum_{i=C^j}^{C^{j+1}}\chi_{B(\frac{1}{2}, r_i)}(T^i x)} {\sum_{i=1}^{C^{N+1}}2r_i}<\frac{\delta}{2}\]  for all $x$, which completes the proof.
\end{proof}

%

\section{Quantitative Boshernitzan's criterion}\label{sec:quant bc}

This section uses Appendix \ref{symb code}. In that Appendix, we recall that $T:[0,1) \to [0,1)$ is measure conjugate to a subshift $S:X \to X$ of the full shift on $d$ symbols. In this section we use both of these (measure-theoretically) equivalent descriptions of the dynamics for various proofs, as suits our purposes.

We want to prove a quantitative version of the following:

\begin{thm} [Boshernitzan \cite{bosh_crit}]\label{thm:bosh crit} 
Let $S:X\to X$ be the left shift acting minimally on a symbolic dynamical system. Let $\mu$ be an $S$-invariant measure.  Let $\epsilon_n$ be the $\mu$ measure of the smallest cylinder set of length $n$. If there exists a constant $c$ such that for infinitely many $n$, $\epsilon_n \geq \frac{c}{n}$, then $S$ is $\mu$-uniquely ergodic.
\end{thm}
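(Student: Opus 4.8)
The plan is to run the Oxtoby argument for unique ergodicity, feeding it the effective equidistribution that Theorem~\ref{thm:quant bc} extracts from the hypothesis $\epsilon_n\ge c/n$; the passage from the interval-exchange formulation of that theorem to the symbolic setting here is routine, since its proof rests only on Lemma~\ref{interval} and a tower argument and is purely combinatorial. \emph{Step 1 (reduction).} It suffices to show that every ergodic $S$-invariant measure equals $\mu$, as every invariant measure is a barycenter of ergodic ones. If $\nu$ is ergodic and invariant, the pointwise ergodic theorem gives, for $\nu$-a.e.\ $x$ and every cylinder $J$, $\frac1N\sum_{k<N}\chi_J(S^kx)\to\nu(J)$ along the full sequence $N\to\infty$; hence it is enough to produce \emph{one} sequence $N_L\to\infty$ such that, for every cylinder $J$, the averages $\frac1{N_L}\sum_{k<N_L}\chi_J(S^k\,\cdot\,)$ converge to the constant $\mu(J)$ uniformly on $X$ --- then $\nu(J)=\mu(J)$ for all cylinders, so $\nu=\mu$. (The limit is forced to be $\mu(J)$ by invariance, and only convergence along this single subsequence is needed, so the sparseness of the ``good'' lengths is irrelevant.)

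\emph{Step 2 (scales, then the quantitative criterion).} The hypothesis gives infinitely many $n$ with $\epsilon_n\ge c/n$; since $n\mapsto\epsilon_n$ is non-increasing and these $n$ are unbounded, I would greedily pick among them $n_1<n_2<\cdots$ with $n_{j+1}>2n_j$, so that $\epsilon_n\ge\epsilon_{n_j}\ge c/n_j$ for every $n\le n_j$, i.e.\ (up to a harmless absolute constant in place of $c$) the hypotheses of Theorem~\ref{thm:quant bc} hold for the sequence $(n_j)$. Fix a cylinder $J$; for every sufficiently large $i$, $J$ is a finite disjoint union $J_1\sqcup\cdots\sqcup J_s$ of $n_i$-blocks. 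Applying Theorem~\ref{thm:quant bc} (in its symbolic form) to each $J_t$ and summing over $t$: for all $x,x'$ and all $L$, $\bigl|\frac1{n_{i+L}}\sum_{k<n_{i+L}}\chi_J(S^kx)-\frac1{n_{i+L}}\sum_{k<n_{i+L}}\chi_J(S^kx')\bigr|<sC_1e^{-C_2L}$. Averaging the $x'$-term against $\mu$ and using invariance, $\bigl|\frac1{n_{i+L}}\sum_{k<n_{i+L}}\chi_J(S^kx)-\mu(J)\bigr|<sC_1e^{-C_2L}$ for every $x$, so $N_L:=n_{i+L}$ is the sequence required in Step 1.

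\emph{The main obstacle.} Everything of substance is concentrated inside Theorem~\ref{thm:quant bc}: promoting the single scalar inequality $\epsilon_n\ge c/n$, a statement about the invariant measure, to a \emph{uniform-over-all-$x$}, combinatorial equidistribution estimate. This is where Boshernitzan's geometric lemma (Lemma~\ref{interval}) is indispensable --- $\epsilon_n\gtrsim 1/n$ forces a Rokhlin tower of height $\gtrsim n$ whose floors are pairwise disjoint cylinders filling a definite proportion of $X$, which controls how often an orbit segment of the appropriate length meets each floor independently of where it begins --- and iterating this across the geometric sequence $(n_j)$ converts one such tower into exponential contraction of cylinder Birkhoff averages. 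Beyond this I expect only bookkeeping: the degenerate case where two discontinuity orbits collide so that $e_T(n)=0$ (excluded by the minimality/aperiodicity hypothesis, the finite-orbit case being trivial), and arranging that $C_1,C_2$ depend on $c$ alone.
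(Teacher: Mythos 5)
Your derivation is correct and matches the paper's intended architecture: the paper states Theorem~\ref{thm:bosh crit} with a citation and then proves the quantitative strengthening Theorem~\ref{thm:quant bc}, the deduction of the former from the latter being exactly your Step 1 Oxtoby-style reduction to uniform subsequence convergence of cylinder Birkhoff averages, followed by the integration-against-$\mu$ step that pins the limit to $\mu(J)$. This is the same route the paper leaves implicit when it calls Theorem~\ref{thm:quant bc} a ``quantitative version of Boshernitzan's criterion.''
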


An analogue of this result was proved for IETs by Veech \cite{V bosh crit}, in which case the invariant/ergodic measure is Lebesgue. Masur \cite{masur crit} established the analogous, in fact stronger, result for flows on flat surfaces.

We will prove Theorem \ref{thm:quant bc}, stated in the introduction for $T$, for the shift $S$ measure conjugate to $T$ described in Appendix $A$:

\begin{thm} \label{thm:shift version}
Let $S:X\to X$ be the symbolic system for a minimal IET, $\mu$ be an invariant measure and $\epsilon_n$ be the smallest $\mu$-measure of an $n$-cylinder of $S$. Assume there exists some $c>0$ and a sequence $(n_i)$ with $n_i\geq10n_{i-1}$ such that $\epsilon_{n_i}>\frac{c}{n_i}$ for all $i$. Let $w$ be a word of length $n_i$ and let $\chi_w$ be the characteristic function for the cylinder set defined by $w$. Then there exist positive constants $C_1,\, C_2, \, \hat{q}$ depending only on $c$ such that for all $x,x'\in X$ we have 
\[\frac {1}{n_{i+\hat{q}+L}}\left|\sum^{n_{i+\hat{q}+ L}}_{j=1} \chi_w(S^jx)-\chi_w(S^jx')\right|<C_1e^{-C_2L} \mu(w)\]
for all $L\in \mathbb{N}$. Here, $\mu(w)$ denotes the measure of the cylinder set defined by $w$.
\end{thm}

We note that in Theorem \ref{thm:quant bc} we assumed $n_i\geq 2n_{i-1}$. To see that Theorem \ref{thm:quant bc} and Theorem \ref{thm:shift version} are equivalent, we can pass to subsequences of the form $n_{4i+k}$.

This is a quantitative version of Boshernitzan's criterion because it tells how quickly any orbit equidistributes. Quantitative ergodicity statements for IETs and flows have been profitably studied with deep results in \cite{forni}, \cite{zorich} and  \cite{ath_forn}.

The next proposition is similar to results used in \cite{V bosh crit}. It provides a construction of a set of \emph{Rokhlin towers} describing the dynamics of $T$ which will be useful in the rest of our proof. Specifically, conditions (1), (2) and (3) define a set of Rokhlin towers $\{ (J_a, m_a)\}$ decomposing $[0,1)$. The rest of the proposition gives quantitative control over the number of towers $t$, the measures of the bases of the towers (and so the levels) $|J_a|$, and the \emph{heights} $m_a$ of the towers.

\begin{prop} \label{towers} 
If $e_T(2n)\geq\frac {c}{2n}$ then there exist intervals $J_1,\dots, J_t$  and numbers $m_1,\dots, m_t$ so that 
\begin{enumerate}
	\item $T^i J_a\cap T^j J_b=\emptyset$ for all $(i,a)\neq (j,b)$ with $0\leq i<m_a$ and $0\leq j<m_b,$
	\item $\cup_{a=1}^r \cup_{\ell=0}^{m_a-1}T^\ell J_a=[0,1),$
	\item $T^i$ is continuous (and therefore an isometry) on $J_a$ for all $0\leq i<m_a$,
	\item $|J_a|\geq e_T(2n)$,
	\item $t\leq \frac{2}{c}$,
	\item $n\leq m_a\leq 2n$ for all $a$.
\end{enumerate}
\end{prop}

\begin{proof}
Recall that $P_{S_k}$ is the partition of $[0,1)$ by the discontinuities of $T^k$. We denote $P_{S_k}$ by $P_k$. If $I\in P_k$, then $I$ has the form $[T^{-n_1}\delta_1, T^{-n_2}\delta_2)$, where $\delta_i$ are discontinuities of $T$ and $0\leq n_i \leq k-1$, and $T^k|_I$ is continuous.

We will construct the Rokhlin towers by drawing the $J_a$'s from the collections $P_n$ and $P_{2n}$. This will ensure that (3) and (4) are satisfied. Once $m_a$ are chosen satisfying (6), $[0,1)$ is the union of at least $n$ copies of each $J_a$. Since $|J_a|\geq e_T(2n) \geq \frac{c}{2n}$ the $m_a$ copies of $J_a$ cover a subset of $[0,1)$ of measure at least $\frac{c}{2}$. Once the disjointness described in (1) is assured, this implies that there are at most $\frac{2}{c}$ of the $J_a$'s, proving (5).

The rest of our proof uses the following simple claim:

\noindent \textbf{Claim:} \emph{If $I_1, I_2 \in P_k$, then $T^{l_1}I_1\cap T^{l_2}I_2 \neq \emptyset$ for some $0\leq l_1\leq l_2<k$ implies $T^{l_1}I_1 \subseteq T^{l_2}I_2$.}

\begin{proof}[Proof of claim:]
We may assume $l_1<l_2$, and then it is sufficient to prove the result for $l_1=0$ by applying $T^{-l_1}$.

As noted above $I_2= [T^{-n_1}\delta_1, T^{-n_2}\delta_2)$ for some $0\leq n_i \leq k-1$. Suppose that $I_1\cap T^{l_2} I_2 \neq \emptyset$ for some $0<l<k$. Unless $I_1 \subseteq T^{l_2}I_2$, we have $T^{l_2}(T^{-n_i}\delta_i) \in Int(I_1)$ for either $i=1$ or $2$. Then $Int(T^{n_i-l_2}I_1)$ contains the discontinuity $\delta_i$. But $n_i-l_2 \leq k-2$ and so $T^k|_{I_1}$ is not continuous, a contradiction.
\end{proof}

Let $J_1,\dots, J_r$ be a maximal subset of $P_n$ so that $T^i(J_a)\cap T^j(J_b)=\emptyset$ for all $(i,a)\neq (j,b)$ with $0\leq i,j<n$. (The claim applied with $I_1=I_2$, an element of  $P_n$ of minimal length, ensures that such a subset exists. Indeed continuity follows from the fact that $I_1$ is an element of $P_n$ and disjointness follows from Lemma \ref{interval}.) Let $m_a=n$ for $a=1,\ldots r$ and  $V_1=\cup_{a=1}^r\cup_{i=0}^{n-1}T^iJ_a$.

If $V_1=[0,1)$ we are done. Otherwise split $V_1^c$ into two sets:
\[U_A=\{x:\exists i<0<j \text{ so that }T^ix, T^jx \in V_1 \text{ and }j-i<n\}\]
\[U_B=(V_1\cup U_A)^c.\]

We now show that $U_A$ and $U_B$ are both unions of elements of $P_{2n}$. For each $x \in U_A$, consider the element $I$ of $P_n$ so that $x \in I$. We have $T^iI \cap J_a\neq \emptyset$ for some $0<i<n$ and some $1\leq a \leq r$. Moreover, $T^{-i}(T^i I \cap J_a)$ is a union of elements in $P_{2n}$. Therefore elements of $P_{2n}$ are either contained in $U_A$ or disjoint from it. Since elements of $P_{2n}$ are clearly either contained in $V_1$ or disjoint from it, similarly $U_B$ is a union of elements of $P_{2n}$.

Now we show how to cover $U_B$ as in the statement of the proposition. Let $I'_1,...,I'_u$ be the elements of $P_{2n}$ which are contained in $U_B$ and such that $T^{-1}I'_i \cap V_1\neq \emptyset$. By construction these also have $T^{-1}I'_i \cap \cup_{a=1}^rT^{n-1}J_a \neq \emptyset$. By the claim, this implies for each $i$ there exists $a$ so that  $T^{-1}I'_i \subset T^{n-1}J_a$. Now if $T^{m^*}I'_i \cap (\cup_{a=1}^r J_a) \neq \emptyset$ for some $m^*< 2n$ (which is necessarily at least $n$) we add $I'_i$ to our collection $\{J_a\}$, set the corresponding $m_a=m^*$, and we add $\cup_{\ell=0}^{m^*-1} T^\ell I_i'$ to $V_1$. Otherwise we add $I'_i$ to the $\{J_a\}$, set $m_a=n$, and add $\cup_{\ell=0}^{n-1}T^{\ell}I'_i$ to $V_1$. We call such an $I'_i$ \emph{recalcitrant}. Performing this for all of the $I'_i$ we obtain $V_2$. We now consider $I''_1,...,I''_v$ so that $I''_j$ are the elements of $P_{2n}$ whose pre-images are contained in $T^n(I_i')$ for some recalcitrant $I_i'$. As before we add the $I''_j$ to $\{J_a\}$ and, if $T^{m^*}I''_j \subset \cup_{a=1}^r J_a$ for some $m^*< 2n$, which is necessarily at least $n$, we set $m_a=m^*$. Otherwise we set $m_a=n$. We add all the $\cup_{\ell=0}^{m_a-1}T^\ell I''_j$ to $V_2$. In this way we obtain $V_3$. We repeat this procedure until we cannot continue, having obtained $V_k$. Observe $V_k$ is covered by a union of towers that satisfy (1), (3), (4), (5) and (6) and covers all of $V_1$ and $U_B$. Therefore anything missing is in $U_A$. We now treat these points.

Now we show how to cover $U_a$ as in the statement of the proposition. If $x \in V_k^c$ then there exist $I \in P_{2n}$ and $i,\ell \in \mathbb{N}$ so that $x\in T^i I$, $T^\ell I \subset J_a$ for some $a\in \{1,...,r\}$, $0\leq i<\ell<n$ and $T^{-1}I \cap V_1 \neq \emptyset$. As above, the claim implies that  $T^{-1}I \subset V_1$. Let $I_1,...,I_s$ be these $I$ and $j_i$ be so that $T^{j_i}I_i \subset I_a$ for some $a$.  If $I_1,...,I_q$ are the $I_i$ that orbit into $J_a$, we refine $\cup_{i=0}^{n-1}T^i J_a$ to be $(\cup_{i=1}^q \cup_{\ell=0}^{j_i+n}T^\ell I_i)\cup (\cup_{\ell=0}^{n-1}T^\ell(J_a\setminus (\cup_{i=1}^qT^{j_i}I_i)))$. Consider $J_a$ partitioned into elements of $P_{2n}$. By the claim, $\cup_{i=1}^qT^{j_i}I_i$ is a union of these partition elements and so its complement is as well. Therefore, replacing $J_a$ with $I_1, \ldots I_q$ (with corresponding $m_a=j_i+n$) and with the elements of $J_a\setminus (\cup_{i=1}^qT^{j_i}I_i)$ (with corresponding $m_a=n$) and using the $(J_a, m_a)$ defined in the argument above, we obtain in total a collection $\{(J_a,m_a)\}$ satisfying condition (2) in addition to the previously ensured (1), (3), (4), (5) and (6). This completes the proof.
\end{proof}

We also need the following results on symbolic systems:

\begin{lem} \label{cliques}
Let $S':X'\to X'$ be a symbolic system such that $\epsilon_1>c'$ and $\epsilon_{n'_i} \geq \frac{c'}{n'_i}$ for a sequence $n'_i$ such that $n'_i \geq 10n'_{i-1}$. Without loss of generality we assume that $c'<1$. Let $C_i=\{x:\sum_{j=0}^{n'_i-1}\chi_1(S'^jx)\geq \frac{c'^{2i+1}}{32^i}n'_i\}$. That is, $C_i$ is the set of all $x$ so that the symbol 1 occurs at least a proportion $\frac{c'^{2i+1}}{32^i}$ of the time in the first $n'_i$ symbols of $x$.  Then $\mu(C_{i+1}) \geq \min\{1,\mu(C_i)+\frac{3c'^2}{4}\}$.
\end{lem}

The proof of this lemma is similar to \cite{bosh_crit}.

\begin{proof}
We first show $\mu(C_{i+1}\setminus C_i)\geq \frac{8c'}{10}$ if $C_{i+1}^c\neq \emptyset$.

Let $u$ be a word of length $n'_{i+1}$ appearing in our system with the fewest occurrences of $1$; let $v$ be a word of length $n'_{i+1}$ with the most occurrences of 1. By our assumption that $C_{i+1}^c\neq \emptyset$, there are fewer than $\frac{c'^{2i+1}}{32^i}{n'_{i+1}}$ occurrences of 1 in $u$. Since $\epsilon_1 > c'$, there are at least $c' n'_{i+1}$ occurrences of 1 in $v$. Because $S':X'\to X'$ is minimal, there is a word $uwv=a_1,...,a_m$ occuring in $X'$. Let $j$ be the maximal index so that $\alpha_j:=a_j,...,a_{j+n'_{i+1}}$ has fewer than $\frac{c'^{2i+3}}{32^{i+1}}n'_{i+1}$ occurrences of the symbol 1; such an index exists by the remarks above. The cylinder set defined by $\alpha_\ell:=a_{\ell},..., a_{\ell+n'_{i+1}-1}$ is contained in $C_{i+1}$ for all $\ell>j$. 

We now estimate the proportion of length-$n'_i$ subwords of $\alpha_j$ which give cylinder sets in $C_i^c$. There are fewer than $\frac{c'^{2i+3}}{32^{i+1}}n'_{i+1}$ occurrences of 1 in $\alpha_j$, each of which occurs in at most $n'_i$ of its length-$n'_i$ subwords. Therefore, there are at most $\frac{c'^2}{32}n'_{i+1}$ length-$n'_i$ subwords (entirely) contained in $\alpha_j$ that give cylinders in $C_i$. There are $n'_{i+1}-n'_i+1$ total length-$n'_i$ subwords in $\alpha_j$. Therefore, we have at least $n'_{i+1}-n'_i + 1 - \frac{c'^2}{32}n'_{i+1}$ length-$n'_i$ subwords of $\alpha_j$ which give cylinders in $C_i^c$. All but perhaps the first length-$n'_{i+1}$ cylinder are in $C_{i+1}$. Using our assumption on $\epsilon_{n_{i+1}}$, this gives
\[ \mu(C_{i+1}\setminus C_i) \geq \left(n'_{i+1}-n'_i-\frac{c'^2}{32}n'_{i+1}\right) \frac{c'}{n'_{i+1}}.\]
Recalling that $n'_{i+1}>10n'_i$, the bound $\mu(C_{i+1}\setminus C_i) \geq \frac{8c'}{10}$ follows easily.
\\

Now we show that $\mu(C_i \setminus C_{i+1})\leq\frac{c'^2}{20}$. Let 
\[h_i:C_i \to \mathbb{N} \text{ by }h_i(x)=\min\{n>0:S^nx \in C_i\}.\] 
By the Kac Lemma (see for example \cite[Theorem 3.6]{kreng}) $\int _{C_i} h_id\mu=\mu(X)=1$. Let $u_x=\sum_{j=0}^{n'_{i+1}-n'_i}\chi_{C_i}(S'^jx)$ and suppose that $x \in C_i \setminus C_{i+1}$. Then  
\[u_x\leq \frac{c'^{2i+3}}{32^{i+1}}n'_{i+1}\left(\frac{c'^{2i+1}}{32^i}\right)^{-1} = \frac{c'^2}{32}n'_{i+1}.\] 
Indeed, there are fewer than $\frac{c'^{2i+3}}{32^{i+1}}n'_{i+1}$ occurrences of 1 in the word of length $n'_{i+1}$ corresponding to a point in $C_{i+1}$, each word giving a point in $C_i$ has at least $\frac{c'^{2i+1}}{32^i}n'_i$ occurrences of 1, and each occurrence of 1 appears in at most $n'_i$ different length-$n'_i$ words. 

Therefore, for each $x\in C_i\setminus C_{i+1}$, we have $\sum_{j=0}^{\frac{c'^2}{32}n'_{i+1}-1} h_i(S'|^j_{C_i} x) \geq \sum_{j=0}^{u_x} h_i(S'|_{C_i}^j x) \geq n'_{i+1}-n'_i$, where (as in Lemma \ref{lem:dense block}) $S'|_A$ denotes the first return map of $S'$ to $A$. Then
\begin{align}
	\frac{c'^2}{32}n'_{i+1} &= \int_{C_i} \sum_{j=0}^{\frac{c'^2}{32}n'_{i+1}-1} h_i(S'|^j_{C_i}x) d\mu \nonumber \\
			& \geq \int_{C_i\setminus C_{i+1}} \sum_{j=0}^{\frac{c'^2}{32}n'_{i+1}-1} h_i(S'|_{C_i}^jx) d\mu \nonumber \\
			& \geq (n'_{i+1}-n'_i) \mu(C_i\setminus C_{i+1}) \nonumber.
\end{align}
Then we have $\mu(C_i \setminus C_{i+1}) \leq (n'_{i+1}-n'_i)^{-1} \frac{c'^2}{32}n'_{i+1}$.  Since $n'_{i+1} \geq 10 n'_i$ a short calculation gives the bound $\mu(C_i\setminus C_{i+1}) \leq \frac{10}{9}\frac{c'^2}{32} \leq \frac{c'^2}{20}$.
\\

From these two bounds it follows that $\mu(C_{i+1}) \geq \mu(C_i) -\frac{c'^2}{20}+\frac{8c'}{10} \geq \mu(C_i)+\frac{3c'^2}{4}$.
\end{proof}

As a corollary we obtain:

\begin{cor}\label{cor:appearance} 
For any minimal symbolic system $S':X'\to X'$ with $\epsilon_1>c'$, $\epsilon_{n_i'}\geq \frac{c'}{n_i'}$ for a sequence $n'_i$ such that $n_i'\geq 10 n_{i-1}'$, there exists an integer $q'$ and a number $\delta>0$ (each depending only on $c'$) so that for any symbol $a$, any $x\in X'$ satisfies
\[ \sum_{i=1}^{n'_l} \chi_a(S'^ix) \geq \delta n'_l \]
for all $l\geq q'$. That is, at least a proportion $\delta$ of the first $n'_l$ symbols of $x$ are $a$'s. 
\end{cor}

\begin{proof}
Let $q'$ be such that $q'\frac{3c'^2}{4}\geq 1$. Let $\delta = \frac{c'^{2q'+1}}{32^{q'}}.$ For $l\geq q'$ and any $x$, each length-$n'_{q'}$ subword of the first $n'_l$ letters of $x$ has at least a proportion $\delta$ of the symbol $a$, by Lemma \ref{cliques} (applied for $a$ instead of 1). This establishes the Corollary. 
\end{proof}

We now describe a symbolic system describing the trajectory of points through the Rokhlin towers of Proposition \ref{towers}.

For any $n$, consider $\mathcal{R}_n=\{ (J_a,m_a)\}$, the set of Rokhlin towers given by Proposition \ref{towers} for this value of $n$. To a point $x\in [0,1)$ we assign the coding $\ldots a_0, a_1, a_2, \ldots$ if $x\in T^kJ_{a_0}$ for some $0\leq k <m_{a_0}$, $T^{n_{a_0}-k}x\in J_{a_1}$, $T^{n_{a_0}-k+n_{a_1}}x\in J_{a_2}$, and so on. In other words, $x$ begins in the $a_0$ tower, and subsequently visits the towers with indices $a_1, a_2, \ldots$.

 Let $X'_n$ be the set of such codings and $S'_n:X'_n\to X'_n$ the corresponding symbolic system. This system is topologically transitive since $T$ is, and it is an easy exercise to check that it satisfies $\epsilon_n > \frac{c'}{n}$ for $c'=\frac{c}{2}$. Apply Corollary \ref{cor:appearance} to this shift, using $n'_i = 10^i$, obtaining $q$ and $\delta$ which depend only on $c$ (and not on $n$). Without loss of generality, we assume $\delta<\frac{1}{3}$.

\begin{proof}[Proof of Theorem \ref{thm:quant bc} and Theorem \ref{thm:shift version}]

Let an integer $i$ and a word $w$ of length $n_i$ be given. We want to show that
\begin{equation}\label{eqn:bound goal}
	\frac{1}{\mu(w)}\sup_{x,x'} \frac{1}{n_{i+\hat{q}+L}}\left|\sum_{j=1}^{n_{i+\hat{q}+L}}\chi_\omega(S^j x)-\chi_\omega(S^j x')\right| < C_1 e^{-C_2L}
\end{equation}
for $C_1, C_2, \hat q>0$ depending only on $c$. We show this by first bounding $L=0$ case above with a bound depending only on $c$. Then we show that there is some $r>0$ depending only on $c$ such that the left-hand side of \eqref{eqn:bound goal} decays by a constant factor $\zeta<1$ depending only on $c$ for every increase of $r$ in $L$. These two facts will accomplish the proof.

\

$L=0$: We claim that there exist constants $\hat{q}, b, B>0$ depending only on $c$ with $\hat q\geq d(2-\log_2\xi)$, so that for all $x$,
\begin{equation}\label{eqn:base bounds}
	b \mu(w) \leq \frac{1}{n_{i+\hat q}}\sum_{j=1}^{n_{i+\hat{q}}}\chi_w(S^jx)\leq B \mu(w).
\end{equation}
From these bounds it will follow that for all $x, x'$,
\[ \frac{1}{n_{i+\hat q}} \left| \sum_{j=1}^{n_{i+\hat q}} \chi_w(S^jx) - \chi_w(S^j x') \right| <  (B-b)\mu(w) \]
as desired for the $L=0$ case. The lower bound will be used below in our proof of the exponential decay.

We prove the  upper bound in equation \eqref{eqn:base bounds}, with $B=\frac{2}{c}$, by an argument similar to that in Lemma \ref{lem:hit bound}. Partition the interval corresponding to $w$ into a minimal collection of subintervals of size $<\epsilon_{n_i}$. Since $\mu(w)\geq \epsilon_{n_i}$, there are at most $\lceil\frac{\mu(w)}{\epsilon_{n_i}}\rceil \leq 2\frac{\mu(w)}{\epsilon_{n_i}}$ of these. As $\epsilon_{n_i}>\frac{c}{n_i}$, this is $<\frac{2}{c}\mu(w)n_i$. These subintervals are hit at most once every $n_i$ iterates, so
\[ \frac{1}{n_{i+\hat q}}\sum_{j=1}^{n_{i+\hat q}} \chi_w(S^jx) \leq \frac{2}{c}\mu(w).\]
The choice of $\hat q$ is not relevant for this part of the argument.

For the lower bound in \eqref{eqn:base bounds} we do the following. Let $\hat n = n_{i+\log_{10}\frac{20}{c^2}}$. Note that $\hat n\geq \frac{20}{c^2} n_i$. Consider the set of towers $\mathcal{R}_{\hat n}$ given by Proposition \ref{towers}. Since the union of the towers is $[0,1)$, there exists some $a^*$ such that 
\[\mu((\cup_{i=0}^{m_{a^*-1}}S^iJ_{a^*})\cap w) \geq \mu(w) \mu(\cup_{i=0}^{m_{a^*-1}}S^iJ_{a^*})=\mu(w)m_{a^*}\mu(J_{a^*}) \geq \mu(w)\frac{c}{2}.\]
By construction, $\mu(S^iJ_{a^*})<\frac{1}{\hat n}$ and the $S^iJ_{a^*}$ are disjoint for $0\leq i\leq m_{a^*}$ so at least $\frac{\mu(w)c/2}{1/\hat n}=\hat n\mu(w)\frac{c}{2}\geq \frac{c}{4}m_{a^*}\mu(w)$ of them intersect $w$. The choice of $\hat n$ and the fact that $\mu(w)\geq \frac{c}{2n_i}$, imply $\frac{c}{4}m_{a^*}\mu(w)\geq 5$. At most two of these intersect $w$ in its boundary since $w$ codes for an interval in $[0,1)$. Therefore 
\[ \left|\{ 0 \leq i \leq m_{a^*}-1 : S^iJ_{a^*}\subseteq w\}\right| \geq \frac{c}{8}m_{a^*} \mu(w).\]

Apply Corollary \ref{cor:appearance} to the symbolic coding $S'_{\hat n}: X'_{\hat n} \to X'_{\hat n}$, obtaining $q'$ and $\delta$ so that for all $l\geq q'$, at least a proportion $\delta$ of any word of length $n'_l$ in $X'$ is the symbol $a^*$. Since the symbols in $X'_n$ correspond to words in $X$ of length $\leq 2\hat n$, every word of length $2\hat n(n'_{q'}+2)$ in $X$ contains a subword corresponding to a word of length $n'_{q'}$ in $X_{\hat n}'$ which accounts for at least a third of its length. Therefore, for all $x$
\[ \frac{1}{n_{i+\hat q}} \sum_{j=1}^{n_{i+\hat q} -1} \chi_w(S^jx) \geq \frac{1}{3}\delta\frac{c}{8} \mu(w) \]
where $\hat q = \log_{10}\frac{20}{c^2}+q'+1.$ We have the lower bound of \eqref{eqn:base bounds} with $b=\frac{c\delta}{24}$ and $\hat q$ depending only on $c$ by Corollary \ref{cor:appearance}.

\

\textit{Exponential decay:} 
We want to show that for all $x, \, x'$, and for all $i$ and $L$, there exists some $r>0$ and $\tilde{\zeta}<1$ such that
\begin{equation}\label{eq:induct decay}
	\frac{1}{n_{i+\hat{q}+L}}\left|\sum_{j=1}^{n_{i+\hat{q}+L}}\chi_{\omega}(S^jx)-\chi_\omega(S^jx')\right| \leq \mu(\omega)(B-b)\tilde{\zeta}^{\lfloor\frac L r\rfloor}.
\end{equation}

We will do this by showing that there exists $r$ so that for words formed by completely traversing $\mathcal{R}_{n_i+L+r}$ the maximum and minimum occurrences of $\omega$ differ by less than words formed by completely traversing $\mathcal{R}_{n_i+L}$. 
We split the general word in words formed by completely traversing $\mathcal{R}_{n_i+k\lfloor \frac L r \rfloor}$ as $k$ varies and a tiny leftover piece. Then a simple sublemma completes the proof.  

Let $u_x$ be the finite word in the $X'_{n_i+\hat q +L}$ coding which records the towers in $\mathcal{R}_{n_{i+\hat q+L}}$ traversed by $x$ over the orbit segment indexed by $[1,n_{i+\hat q+L+r}]$, omitting the first and last symbols, which correspond to towers which $x$ may not fully traverse. We choose $r$ so that $\frac{4}{10^r}<\frac{\delta^2}{2}$; note that it depends only on the $\delta$ given by Corollary \ref{cor:appearance}, and hence only on $c$.

Consider the towers in $\mathcal{R}_{n_{i+\hat q +L}}$ whose corresponding words in the coding $X$ have the maximal and minimal frequencies of $w$ as subwords. Denote these frequencies by $\mu(w)\Xi_L$ and $\mu(w)\xi_L$, respectively. By the argument using Corollary \ref{cor:appearance} which gave the lower bound of \eqref{eqn:base bounds} above, $\xi_L\geq b>0$ where $b$ depends only on $c$.

Now we apply Corollary \ref{cor:appearance} to the coding $X'_{n_{i+\hat q+L}}$, as defined in paragraphs between Corollary \ref{cor:appearance} and the start of the proof. In every word in $X'_{n_{i+\hat q+L}}$, each symbol appears with frequency $\geq \delta$ for any subword of length at least $q'$. In particular, this is true of the symbols $A$ and $a$ respectively representing the towers in which $w$ appears with frequencies $\mu(w)\Xi_L$ and $\mu(w)\xi_L$. Let us further assume that $r>q'$, a choice again depending only on $c$.

Now for our word $u_x\in X'_{n_{i+\hat q+L}}$, $A$ appears with frequency $\geq\delta$ and $a$ appears with frequency $\geq\delta$. Therefore, the frequency of $w$ for $x$ is between $\delta\mu(w)\Xi_L+(1-\delta)\mu(w)\xi_L$ and $(1-\delta)\mu(w)\Xi_L+\delta\mu(w)\xi_L$ (up to the small error coming from omitting the initial and final symbols in forming $u_x$). The frequency of $w$ for $x$ thus lies in a range of size bounded above by $(1-2\delta)\mu(w)(\Xi_L-\xi_L)$. Letting $\zeta = 1-2\delta$ proves \eqref{eq:induct decay} for those indices $j$ covered by the word $u_x$. (Our eventual $\zeta$ will be different.)

Recall that for any $x\in X$ we have written the orbit of $x$ over the indices $[1,n_{i+\hat q+L+r}]$ as a prefix of length $\leq 2n_{i+\hat q +L}$, a core piece during which the orbit fully traverses towers from $\mathcal{R}_{n_{i+\hat q +L}}$ and then a suffix of length $\leq 2n_{i+\hat q +L}$. The work above shows that range of frequencies with which core piece of the orbit hits the cylinder set defined by $w$ decays by the factor $\zeta<1$ each time $L$ increases by $1$. To complete the proof we need to incorporate the prefix and suffix.

Note that the prefix and suffix take up a proportion $\leq \frac{4}{10^r}$ of the indices in $[1,n_{i+\hat q+L+r}]$. Decompose the prefix and suffix into core orbit segments fully traversing towers from $\mathcal{R}_{n_{i+\hat q +L-r}}$, leaving a second set of prefix and suffix segments each of length at most $ n_{i+\hat q +L-r}$. These segments take up a proportion $\leq 2 \cdot \frac{4^2}{10^{2r}}$ of the indices in $[1,n_{i+\hat q+L+r}]$ and the range of frequencies with which $w$ appears in these segments is bounded above by $\mu(w)(\Xi_{L-r} - \xi_{l-r}) < \zeta \mu(w)(\Xi_L-\xi_L)$. Proceeding in this way, we decompose the original suffix and prefix into segments taking up a proportion $\leq 2^k \cdot \frac{4^k}{10^{kr}}$ of the indices in $[1,n_{i+\hat q+L+r}]$ in which $w$ appears with a frequency range $<\mu(w)(\Xi_{L-kr} - \xi_{l-kr}) < \zeta \mu(w)(\Xi_{L-(k-1)r}-\xi_{L-(k-1)r})$.

Therefore, we can bound the range of frequencies for the entire prefix and suffix by $\mu(w)\sum_{k=1}^{L-1}\frac{4^k}{10^{kr}}(\Xi_{L-kr}-\xi_{L-kr})$.  Then the total frequency of $w$ over the indices $[1, n_{i+\hat q +L+r}]$ lies in a range of size bounded above by 
\begin{equation}\label{eq:final}
	(1-2\delta)\mu(w)(\Xi_L-\xi_L)+\mu(w)\sum_{k=1}^{L-1}\frac{4^k}{10^{kr}}(\Xi_{L-kr}-\xi_{L-kr}).
\end{equation}

To prove exponential decay we use the following fact:

\begin{slem}
If $0<\alpha,\gamma<1$ satisfy $\alpha+\gamma \alpha^{-1}<1$ and $(x_j)$ is a sequence of positive numbers so that $x_{j+1}<\alpha x_j+\sum_{k=1}^{j-1} \gamma^k x_{j-k}$ for all $j$, then $x_j<x_0 (\alpha + \gamma \alpha^{-1})^j$ for all $j$. 
\end{slem}

\begin{proof}[Proof of Sublemma] The proof is by induction. The $j=0$ case is immediate. Then
$$x_{j+1}<\alpha x_j+\sum_{k=1}^{j-1}\gamma^kx_{j-k}<\alpha (\alpha+\gamma\alpha^{-1})^jx_0+\sum_{k=1}^{j-1}\gamma^k(\alpha+\gamma \alpha^{-1})^{j-k}x_0,$$
where the final inequality is by induction. Now  
\begin{align}
	\sum_{k=1}^{j-1}\gamma^k(\alpha+\gamma \alpha^{-1})^{j-k} &= (\alpha+\gamma\alpha^{-1})^j\sum_{k=1}^{j-1}(\frac{\gamma}{\alpha+\gamma\alpha^{-1}})^k \nonumber \\
		&<(\alpha+\gamma\alpha^{-1})^j \frac{\gamma}{\alpha+\gamma \alpha^{-1}}\left(1-\frac{\gamma}{\alpha+\gamma\alpha^{-1}}\right)^{-1} \nonumber \\
		& = (\alpha+\gamma\alpha^{-1})^j \frac{\gamma}{\alpha+\gamma\alpha^{-1}-\gamma} \nonumber \\
		& < (\alpha+\gamma\alpha^{-1})^j \frac{\gamma}{\alpha}. \nonumber
\end{align}
Therefore, 
\[ x_{j+1} < \left[ \alpha(\alpha+\gamma\alpha^{-1})^j+(\alpha+\gamma\alpha^{-1})^j \frac{\gamma}{\alpha}\right]x_0\]
which simplifies to the desired result.
\end{proof}

Let $x_j=\Xi_{jr}-\xi_{jr}$, $\alpha = 1-2\delta$ and $\gamma=\frac{4}{10^r}< \frac{\delta^2}{2}$. Then $\hat \zeta = (1-2\delta + \frac{4}{10^r(1-2\delta)}) <(1-2\delta)+\frac{\delta^2}{2}\delta^{-1}<1$ (using $\delta <\frac{1}{3}$). 
Applying the Sublemma, using the $L=0$ case to bound $x_0$ by $\mu(\omega)(B-b)$ we bound \eqref{eq:final}  by 
$$\mu(\omega)(B-b)\left(1-2\delta+\frac 4 {10^r(1-2\delta)}\right)^{\lfloor\frac L r\rfloor},$$
implying the theorem.


\end{proof}

%

 \appendix
 \section{Symbolic coding for IETs}\label{symb code}

We use the symbolic coding of interval exchange transformations and concepts related to it. In this Appendix we supply some standard definitions and terminology related to this coding. We show the well known and useful fact that IETs are basically the same as (measure conjugate to) continuous maps on compact metric spaces, and we recall the definition of a Rokhlin tower, a concept which appears in the proof of Theorem \ref{thm:quant bc}.

\begin{defin}[Standard coding for an IET]
The \emph{standard coding} of an interval exchange transformation $T$ with intervals $I_i$ is given by
\begin{center} $\tau \colon [0,1) \to \{1,2,...,d\}^{\mathbb{Z}}$ by $\tau(x)=...,a_{-1},a_0,a_1,...$ where $T^i(x) \in I_{a_i}$. \end{center}
\end{defin} 

Note that the coding map $\tau$ is not continuous as a map from $[0,1)$ with the standard topology to $\{1,2,...,d\}^{\mathbb{Z}}$ with the product topology.

\begin{defin}[Blocks of a coding]
Fix a point $x$, that is not in the orbit of a discontinuity of $T$. Let
\begin{center} $w_{p,q}(x)=c_p,c_{p+1},...,c_{q-1},c_q$ where $\tau(x)=...c_{-1},c_{0},c_1,...$ \end{center} 
This word is a \emph{block} of \emph{length} $q-p$, or a $(q-p)$\emph{-block}.
\end{defin}

A key element in our proof is the $n$-block interval:

\begin{defin}[$n$-block interval]\label{defn:n block interval}
An interval $J\subset [0,1)$ is an \emph{$n$-block interval} if $J=\{x: w_{0,n}(x) = w_{0,n}(x_0) \mbox{ for some } x_0\}$.
\end{defin}
Note that the \emph{measure} of an $n$-block interval is the length of the interval $J$. We use `measure' rather than `length' so as not to create confusion with the length $n$ of the coding block corresponding to this $n$-block interval.

We would like to consider $\tau([0,1))$ as a subshift of the full shift on $\{1,\ldots d\}^\mathbb{Z}$, but the situation is not so simple. Observe that the left shift $S$ acts continuously on $\tau([0,1))\subset \{1,2,...,d\}^{\mathbb{Z}}$. However, if $T$ satisfies the Keane condition, then $\tau([0,1))$ is not closed in $\{1,2,...,d\}^{\mathbb{Z}}$ with the product topology. To see this, consider points just to the left of a discontinuity of $T$ and the $n$-blocks $w_{0,n}(x)$ corresponding to them. As $x$ approaches the discontinuity and $n\to \infty$, these finite blocks do not converge to an infinite block in $\tau([0,1))$. Let $ \hat X$ be the closure of  $\tau([0,1))$  in $\{1,2,...,d\}^{\mathbb{Z}}$ with the product topology. $\hat X$ results from  adding a countable number of points to $\tau([0,1))$ which correspond to the left hand sides of points in orbits of a discontinuity. $\hat X$ is a compact metric space and, equipped with the left shift $S$, is a subshift. Equip $\hat X$ with a measure $\mu$ assigning to the cylinder set defined by each block the Lebesgue measure of the corresponding block interval in $[0,1)$.

 Let $f: \hat X \to [0,1)$ by $f|_{\tau([0,1))}= \tau^{-1}$ and extend $f$ by continuity to the rest of $\hat X.$ 
 Notice that, unlike $\tau$, the map $f$ \emph{is} continuous. 
  Moreover the map is injective away from $\tau^{-1}$ of the orbits of discontinuities, where it is $2$-to-$1$. The left shift $S$ acts continuously on $\hat X$ and if $T$ satisfies the Keane condition, then the action of $S$ on $(\hat X,\mu)$ is measure conjugate to the action of  $T$ on $([0,1), Leb)$.

\begin{defin}[Rokhlin Tower]
Let half open intervals $J_1,...,J_r$ and natural numbers $m_1,...,m_r$ be given such that 
\begin{itemize}
	\item $T^j$ is continuous (thus an isometry) on $J_i$ for $0\leq j < m_i$,   
	\item $\underset{i=1}{\overset{r}{\cup}}\underset{j=0}{\overset{m_i-1}{\cup}}T^j(J_i)=[0, 1)$, and 
	\item $T^j(J_i) \cap T^{j'}(J_{i'})=\emptyset$ when $0 \leq j <j' < m_i$, $0\leq j' < m_{i'}$ and  $j\neq j'$ if $i=i'$.
\end{itemize}
Then we say that the $\underset{j=0}{\overset{m_i-1}{\cup}}T^j(J_i)$ are \emph{Rokhlin towers}. $m_i$ is called the \emph{height} of the Rokhlin tower. Each $T^j(J_i)$ is called a \emph{level} of the tower. 
\end{defin}

Rokhlin towers and the symbolic coding are closely related. Up to a suffix and a prefix, every word in $\tau([0,1))$ is a concatenation of the length $m_i$ coding of the points in $J_i$ as $i$ ranges in $\{1,...,r\}$. The prefix and suffix are subwords of these codings.

\end{document}